\theoremstyle{plain}
    \newtheorem{thm}{Theorem}[section]
    \newtheorem{corollary}[thm]{Corollary}
    \newtheorem{example}[thm]{Example}
    \newtheorem{lemma}[thm]{Lemma}
    \newtheorem{proposition}[thm]{Proposition}
    \newtheorem{question}[thm]{Question}
    \newtheorem{theorem}[thm]{Theorem}
\theoremstyle{definition}
    \newtheorem{definition}[thm]{Definition}
    \newtheorem{remark}[thm]{Remark}
\theoremstyle{remark}
\newcommand{\Q}{\mathbb{Q}}
\newcommand{\R}{\mathbb{R}}
\newcommand{\Z}{\mathbb{Z}}
\newcommand{\alb}{\operatorname{alb}}
\newcommand{\Amp}{\operatorname{Amp}}
\newcommand{\Aut}{\operatorname{Aut}}
\newcommand{\Bigc}{\operatorname{Big}}
\newcommand{\Fix}{\operatorname{Fix}}
\newcommand{\GL}{\operatorname{GL}}
\newcommand{\id}{\operatorname{id}}
\newcommand{\NE}{\overline{\operatorname{NE}}}
\newcommand{\Nef}{\operatorname{Nef}}
\newcommand{\NS}{\operatorname{NS}}
\newcommand{\PE}{\operatorname{PE}}
\newcommand{\Pos}{\operatorname{Pos}}
\newcommand{\Per}{\operatorname{Per}}
\newcommand{\Prep}{\operatorname{Prep}}
\newcommand{\Tor}{\operatorname{Tor}}
\newcommand{\N}{\operatorname{N}}
\newcommand{\Alb}{\operatorname{Alb}}
\newcommand{\Pic}{\operatorname{Pic}}
\begin{document}

\title[Endomorphisms of projective varieties]
{On endomorphisms of projective varieties with numerically trivial canonical divisors}

\author{Sheng Meng}

\address
{
\textsc{Max-Planck-Institut f\"ur Mathematik, Vivatsgasse 7, Bonn 53111, Germany}
}
\email{ms@u.nus.edu}

\begin{abstract}
Let $X$ be a klt projective variety with numerically trivial canonical divisor.
A surjective endomorphism $f:X\to X$ is amplified (resp.~quasi-amplified) if $f^*D-D$ is ample (resp.~big) for some Cartier divisor $D$.
We show that after iteration and equivariant birational contractions, a quasi-amplified endomorphism will descend to an amplified endomorphism.

As an application, when $X$ is Hyperk\"ahler, $f$ is quasi-amplified if and only if it is of positive entropy. In both cases, $f$ has Zariski dense periodic points.
When $X$ is an abelian variety, we give and compare several cohomological and geometric criteria of amplified endomorphisms and endomorphisms with countable and Zariski dense periodic points (after an uncountable field extension).
\end{abstract}
\makeatletter
\@namedef{subjclassname@2020}{\textup{2020} Mathematics Subject Classification}
\makeatother

\subjclass[2020]{
14E30,   
32H50, 
08A35,  
14J50, 
11G10.  
}

\keywords{amplified endomorphism, quasi-amplified endomorphism, PCD endomorphism, positive entropy, periodic points, iteration, Albanese morphism}

\maketitle
\tableofcontents

\section{Introduction}

We work over an algebraically closed field $k$ of characteristic $0$.
Let $f$ be a surjective endomorphism of a projective variety $X$. 
It was first defined by Krieger and Reschke (cf.~\cite{KR}) that $f$ is {\it amplified}, if $H:=f^*L-L$ is ample for some Cartier divisor $L$.
The polarized endomorphism is a special case and has been studied in various aspects (cf.~\cite{BH}, \cite{CMZ}, \cite{Meng}, \cite{MZ}, \cite{MZ_PG}, \cite{MZ_S}, \cite{Na-Zh}, \cite{Zh-comp}).
One of the main methods of studying polarized endomorphisms is the equivariant lifting and descending.
However, as discussed in \cite[Section 1]{Meng}, the ``amplified'' property can not be preserved by an equivariant birational lifting and it is not known whether it is preserved via an equivariant descending.
For the lifting problem, one natural way is to imitate the ``quasi-polarized'' endomorphism (cf.~\cite{MZ}) to introduce the {\it quasi-amplified} endomorphism, i.e., $B:=f^*L-L$ is big for some Cartier divisor $L$,
though it is now known that ``quasi-polarized'' is just ``polarized'' (cf.~\cite[Proposition 1.1]{MZ}, \cite[Theorem 5.1]{CMZ}).

From the geometric point of view,
Fakhruddin showed the following very motivating Theorem \ref{thm-fak}.
Further, an amplified endomorphism has only countably many periodic points (cf.~\cite[Lemma 2.4]{Meng}). Note that every variety has only countably many points if the base field is countable. To exclude this, we shall work over an uncountable field.
Now, we define a surjective endomorphism to be {\it PCD}, if its periodic points are countable and Zariski dense after replacing the base field by an uncountable one (cf.~Definition \ref{def-pcd}). 
Alternatively, without the argument of base change, this is equivalent to saying that its periodic points are Zariski dense and the set of fixed points $\Fix(f^i)$ is finite for any $i>0$ (cf.~Proposition \ref{prop-ext}).
Note that ``amplified'' is then always ``PCD'' (cf.~Theorem \ref{thm-amp-pcd}).
On the other hand, when the base field is uncountable, an amplified endomorphism $f$ admits a Zariski dense orbit, which means for some $x\in X$, the orbit $\{f^n(x)\,|\,n\ge 0\}$ is Zariski dense in $X$ (cf.~Theorem \ref{thm-amp-orbit}).
We refer to Section \ref{sec-2} for details.

\begin{theorem}\label{thm-fak}(cf.~\cite[Theorem 5.1]{Fak}) Let $f:X\to X$ be an amplified endomorphism of a projective variety $X$.
Then the set of $f$-periodic points $\Per(f)$ is Zariski dense in $X$.
\end{theorem}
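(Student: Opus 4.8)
The plan is to exploit the defining property of an amplified endomorphism: there is a Cartier divisor $L$ with $H:=f^*L-L$ ample. First I would reduce to a statement about closed points over the algebraic closure and, after passing to a suitable ample line bundle, arrange that $H$ itself is very ample, so that $X$ embeds in some $\PP^N$ via $|H|$. The key observation is that the condition ``$f(x)=x$'' (or more generally ``$f^n(x)=x$'') is cut out by an ideal sheaf whose degree can be controlled: the graph $\Gamma_{f^n}\subseteq X\times X$ meets the diagonal $\Delta_X$ in a subscheme $\Fix(f^n)$ whose class in cohomology (or whose intersection number against an ample class) grows in a predictable way because $f^*$ multiplies the relevant positive class by a factor bounded below.

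Second, the heart of the argument is a counting/positivity estimate. Using that $(f^n)^*L - L = \sum_{i=0}^{n-1}(f^i)^*H$ is a sum of $n$ pullbacks of the ample divisor $H$, one shows that the ``dynamical degree'' on divisors attached to $f$ along the $L$-direction forces the fixed-point subscheme $\Fix(f^n)$ to be nonempty and in fact to have intersection number with a fixed ample polarization growing without bound. More precisely, I would apply a topological/holomorphic Lefschetz-type fixed point count, or the purely algebraic substitute via intersection theory on $X\times X$: the class $[\Gamma_{f^n}]\cdot[\Delta_X]$ is computed by a polynomial in the eigenvalues of $f^*$ acting on $\bigoplus H^*(X)$, and amplified-ness guarantees that $f^*$ has an eigenvalue of modulus $>1$ on $\NS(X)_{\BRR}$ (namely on the ray spanned by a suitable nef class built from $H$), so this count tends to infinity. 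Hence $\bigcup_n \Fix(f^n)$ is infinite.

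Third, I would upgrade ``infinitely many periodic points'' to ``Zariski dense periodic points'' by a standard noetherian-induction / restriction argument. Let $Z$ be the Zariski closure of $\Per(f)$; since $f$ permutes the (finitely many, after iteration) irreducible components of any $f$-periodic subvariety, replacing $f$ by an iterate we may assume $f(Z)=Z$ and each component of $Z$ is $f$-invariant. If $Z\ne X$, then on the complement, or rather by restricting the amplified divisor $L$ to an $f$-invariant subvariety $W\not\subseteq Z$ (chosen minimal among positive-dimensional $f$-invariant subvarieties not contained in $Z$), the restriction $f|_W$ is again amplified because ampleness of $H$ restricts to ampleness of $H|_W$; then the fixed-point count above produces a periodic point on $W$ outside $Z$, contradicting the definition of $Z$. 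One has to handle the case $\dim W=0$ separately, but that is exactly a periodic point outside $Z$, again a contradiction. Therefore $Z=X$.

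The main obstacle I expect is the fixed-point count itself: making rigorous that $\#\bigcup_{n}\Fix(f^n)=\infty$ in the purely algebraic (not necessarily Kähler) setting, since one cannot directly invoke the topological Lefschetz formula over an arbitrary algebraically closed field of characteristic $0$. I would circumvent this either by the Lefschetz trace formula in $\ell$-adic cohomology (spreading out to a finitely generated base and using that $f^*$ still has a large eigenvalue, detected already on $\NS$ hence on $H^2$), or, more in the spirit of this paper, by a direct intersection-theoretic argument on $X\times X$ showing $\deg_{H\boxtimes H}\bigl(\Gamma_{f^n}\cap \Delta_X\bigr)$ is bounded below by a positive multiple of $((f^n)^*H \cdot H^{\dim X -1})$ when the fixed locus is finite, which blows up; a secondary subtlety is ensuring the fixed subschemes, even if nonreduced, contribute genuinely new closed points rather than fattening a bounded set, which is handled by bounding the length of $\Fix(f^n)$ at any single point uniformly in $n$ (a consequence of $f$ being étale in a neighborhood of a non-degenerate fixed point, or of a Noetherian argument on the jets).
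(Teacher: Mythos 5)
Your third step is where the proof genuinely breaks. Setting $Z=\overline{\Per(f)}$ and running a fixed-point count on an $f$-invariant subvariety $W\not\subseteq Z$ cannot produce a periodic point outside $Z$: every periodic point of $f|_W$ is a periodic point of $f$ and therefore lies in $Z$ by the very definition of $Z$, so no contradiction arises. Counting arguments give at best \emph{infinitude} of periodic points, never any control on their location, and your minimality choice of $W$ does not help: the closure of $\Per(f|_W)$ is an invariant closed subset of $W$ which is allowed to be positive-dimensional and contained in $W\cap Z$, and such subsets were excluded from your minimality class precisely because they lie in $Z$ (also, an invariant $W\neq X$ with $W\not\subseteq Z$ need not exist at all, and if $W=X$ you are back where you started). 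This location problem is the actual content of the theorem, and the proposal has no mechanism for it. Note that the paper itself does not reprove this statement; it quotes Fakhruddin, whose argument is of a different nature: spread out over a finitely generated $\Z$-algebra and reduce to $\overline{\mathbb{F}_p}$, where every closed point is automatically preperiodic, and then compose $f^n$ with high powers of Frobenius --- the fixed points of such twisted maps are $f$-periodic and can be produced away from any prescribed proper closed subset --- before transporting the conclusion back to characteristic $0$. Any repair of your sketch essentially has to import an idea of this kind (or, over $\C$ in special cases, equidistribution of periodic points), not more intersection theory on $X$ alone.

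The counting step itself is also not justified as written. Growth of $[\Gamma_{f^n}]\cdot[\Delta_X]$ does not follow from $f^*$ having an eigenvalue of modulus $>1$ on $\NS(X)\otimes\R$: the Lefschetz number is an alternating sum of traces over \emph{all} cohomology, and cancellation against odd-degree terms must be excluded (over $\C$ this needs Dinh--Sibony-type comparisons of dynamical degrees; in the stated generality $X$ may be singular, so one does not even have Poincar\'e duality to intersect with a diagonal class on $X\times X$, and the $\ell$-adic Lefschetz--Verdier local terms are not naive multiplicities without transversality). The asserted lower bound of $\deg_{H\boxtimes H}(\Gamma_{f^n}\cap\Delta_X)$ by a multiple of $(f^n)^*H\cdot H^{\dim X-1}$ is not a standard fact: positivity of $(f^n)^*H$ alone never forces the graph to meet the diagonal (translations on abelian varieties have empty fixed locus), so some global trace formula is unavoidable. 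Finally, even granting that the intersection numbers tend to infinity and that each $\Fix(f^n)$ is finite (which does hold for amplified $f$), local multiplicities at degenerate fixed points can grow with $n$, so unbounded intersection numbers do not immediately yield infinitely many distinct periodic points; your proposed fix via "\'etale near a nondegenerate fixed point" assumes exactly the nondegeneracy that needs to be proved.
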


In this paper, we try to look for the connections between cohomological and geometric description among these conditions and mainly focus on a projective variety $X$ of klt Calabi-Yau type, i.e., $(X,\Delta)$ is klt and $K_X+\Delta\equiv 0$ (numerical equivalence) for some effective Weil $\Q$-divisor $\Delta$.
Note that such a pair has $K_X+\Delta\sim_{\mathbb{Q}} 0$ ($\mathbb{Q}$-linear equivalence) by \cite[Chapter V, Corollary 4.9]{ZDA}.
We refer to \cite{KM} for the standard definitions, notation, and terminology in birational geometry.
In this setting, we first show that a quasi-amplified endomorphism $f$ of such an $X$ is {\it birationally equivalent} to an amplified endomorphism $g:Y\to Y$, which means there is a birational map $\pi:X\dasharrow Y$ such that $g\circ\pi=\pi\circ f$.
Precisely, we have the following result.
 
\begin{theorem}\label{main-thm-qa-a}(cf.~Theorem \ref{thm-qa-a})
Let $f:X\to X$ be a quasi-amplified endomorphism of a projective variety $X$ of klt Calabi-Yau type.
Then replacing $f$ by a positive power, there is an $f$-equivariant sequence of birational contractions of extremal rays (in the sense of \cite[Definition 4.1]{MZ_PG})
$$X=X_1\to \cdots \to X_i \to \cdots \to X_r$$
(i.e. $f=f_1$ descends to surjective endomorphism $f_i$ on each $X_i$), such that we have:
\begin{itemize}
\item[(1)] $f_r$ is amplified.
\item[(2)] For each $i$, $X_i$ is of klt Calabi-Yau type.
\item[(3)] For each $i$, $f_i$ is of positive entropy (cf.~Definition \ref{def-pcd}) and quasi-amplified and $\Per(f_i)\cap U_i$ is countable and Zariski dense in $X_i$ for some open subset $U_i\subseteq X_i$. 
\item[(4)] Suppose the base field $k$ is uncountable. For each $i$, $f_i$ has a Zariski dense orbit.
\end{itemize}
\end{theorem}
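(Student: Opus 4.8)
The plan is to drive an $f$-equivariant Minimal Model Program by the big divisor $B:=f^{*}L-L$, and to transport every remaining assertion back along the resulting birational contractions from the terminal (amplified) model. I would first dispose of the entropy claim in (3): a quasi-amplified endomorphism $g$ of a projective variety is automatically of positive entropy. If $\deg g\ge 2$ this is clear, since the top dynamical degree of $g$ equals $\deg g>1$. If $g$ is an automorphism, then zero entropy forces the spectral radii of $g^{*}$ and $(g^{*})^{-1}$ on $N^{1}$ to equal $1$, hence (Kronecker) $g^{*}|_{N^{1}}$ to be quasi-unipotent; using that $g^{*}$ and $(g^{*})^{-1}$ preserve the full-dimensional cones $\overline{\Nef}$ and $\overline{\operatorname{Eff}}$, one checks that $\operatorname{Im}(g^{*}-\id)$ then lies in a subspace disjoint from the big cone, contradicting quasi-ampleness. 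So for (3) it suffices to keep track of quasi-ampleness of each $f_{i}$.

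For the MMP itself: after a $\Q$-linear change of $L$ we may take $B\ge 0$, and for small rational $\ep>0$ the pair $(X,\Delta+\ep B)$ is klt with $K_{X}+\Delta+\ep B\sim_{\Q}\ep B$, where $(X,\Delta)$ is klt with $K_{X}+\Delta\sim_{\Q}0$. Replacing $f$ by one suitable power, I would run an $f$-equivariant $(K_{X}+\Delta+\ep B)$-MMP in the sense of \cite[Definition 4.1]{MZ_PG},
\[
X=X_{1}\to X_{2}\to\cdots\to X_{s},
\]
each step contracting a $(K_{X_{i}}+\Delta_{i}+\ep B_{i})$-negative --- equivalently, since $K_{X_{i}}+\Delta_{i}\equiv 0$, a $B_{i}$-negative --- extremal ray, with $f=f_{1}$ descending compatibly to each $f_{i}$ (here $B_{i},\Delta_{i}$ are strict transforms). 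As $\ep B$ is big, this MMP terminates (run it with scaling of an ample divisor) and does not end in a Mori fibre space, so $K_{X_{s}}+\Delta_{s}+\ep B_{s}$, hence $B_{s}$, is nef, and $B_{s}$ is still big; since $aB_{s}-(K_{X_{s}}+\Delta_{s})\sim_{\Q}aB_{s}$ is nef and big, the base-point-free theorem makes $B_{s}$ semiample, yielding a birational contraction $\phi\colon X_{s}\to X_{r}$ with $B_{s}=\phi^{*}A$, $A$ ample. Finally $B_{s}^{(n)}:=(f_{s}^{n})^{*}L_{s}-L_{s}=\sum_{0\le j<n}(f_{s}^{j})^{*}B_{s}$ is nef and big with null locus decreasing, hence eventually constant, in $n$, so for large $n$ every $\phi$-contracted curve $C$ has $(f_{s}^{n})^{*}B_{s}\cdot C=0$, i.e.\ $B_{s}\cdot(f_{s}^{n})_{*}C=0$; thus (after a further power, absorbed into the first) $f_{s}$ preserves the $\phi$-contracted curves and descends to a morphism $f_{r}$ on $X_{r}$. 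Splitting $\phi$ into contractions of extremal rays, $X=X_{1}\to\cdots\to X_{s}\to X_{r}$ is the desired sequence.

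It remains to read off the four items. Tracing $B$ through the contractions, $f_{r}^{*}L_{r}-L_{r}\sim_{\Q}\phi_{*}B_{s}=A$ is ample, so $f_{r}$ is amplified (a power being amplified is equivalent to $f_{r}$ itself being amplified), which is (1). Every step of the MMP and $\phi$ is $(K_{X}+\Delta)$-trivial, klt is preserved, and $K_{X_{i}}+\Delta_{i}\equiv 0$ persists, so each $X_{i}$ is of klt Calabi--Yau type, which is (2). Each $B_{i}\sim_{\Q}f_{i}^{*}L_{i}-L_{i}$ is big (birational transforms of big divisors are big), so each $f_{i}$ is quasi-amplified, hence of positive entropy by the first paragraph. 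For the rest of (3): let $\pi_{i}\colon X_{i}\to X_{r}$ be the composite contraction and $V_{i}\subseteq X_{r}$ an open dense subset over which $\pi_{i}$ restricts to an isomorphism from $U_{i}:=\pi_{i}^{-1}(V_{i})$ onto $V_{i}$; from $\pi_{i}\circ f_{i}^{n}=f_{r}^{n}\circ\pi_{i}$ and bijectivity of $\pi_{i}$ over $V_{i}$ one gets $\Per(f_{i})\cap U_{i}=\pi_{i}^{-1}(\Per(f_{r})\cap V_{i})$, which is countable by \cite[Lemma 2.4]{Meng} (after an uncountable base change) and Zariski dense in $X_{i}$ by Theorem \ref{thm-fak} together with density of $U_{i}$. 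For (4): if $k$ is uncountable then $f_{r}$ has a Zariski dense orbit (Theorem \ref{thm-amp-orbit}); replacing its base point by a forward iterate landing in $V_{i}$ and lifting to $U_{i}$, the equivariance forces the orbit closure in $X_{i}$ to surject onto $X_{r}$, hence to be all of $X_{i}$.

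The hard part is entirely in the MMP step: arranging that a single power of $f$, fixed at the outset, simultaneously makes the whole a priori non-equivariant sequence of MMP contractions \emph{and} the closing semiample contraction $f$-equivariant, so that $f$ descends to a genuine morphism at every stage. This is the inductive book-keeping of \cite[\S4]{MZ_PG} (finiteness of the relevant extremal rays at each of the finitely many stages, their permutation under the induced birational action of $f$, termination of the MMP), together with the equivariant $\Q$-factorialisation needed to run the MMP and the Cartier-versus-$\Q$-Cartier bookkeeping required to present $A$ as $f_{r}^{*}L_{r}-L_{r}$ with $L_{r}$ Cartier. Everything else --- preservation of the klt Calabi--Yau condition, bigness of the transforms of $B$, and transporting dense periodicity and a dense orbit from the amplified model $X_{r}$ --- is routine.
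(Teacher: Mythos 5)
Your overall route (make $B=f^*L-L$ nef by a $(K_X+\Delta+\ep B)$-MMP, then contract its null locus and pull everything back) is genuinely different from the paper's, but it has a real gap exactly at the point you defer to ``the inductive book-keeping of \cite[\S4]{MZ_PG}'': there is no result, there or elsewhere, that lets you run a $B$-negative MMP $f$-equivariantly after fixing a single power of $f$, and for a merely quasi-amplified endomorphism this step can genuinely fail. The point is that $f_*$ permutes the extremal rays of $\NE(X)$ but does \emph{not} preserve $B$-negativity (since $f^*B\neq B$), and a $B$-negative extremal ray has no reason to be $f_*$-periodic: e.g.\ for a positive-entropy automorphism of a K3 or Hyperk\"ahler manifold, the negative curves met by the MMP may all have infinite $f$-orbits, and then no power of $f$ descends along the contraction your MMP chooses. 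The equivariance results of \cite{MZ}, \cite{MZ_PG}, \cite{CMZ} concern polarized/int-amplified endomorphisms, where the needed periodicity of rays is proved by separate arguments; they do not apply here. This is precisely what the paper's proof of Theorem \ref{thm-qa-a} is engineered to avoid: it never touches $B$-negative rays. Instead, while $f_i$ is not amplified, Proposition \ref{prop-a-cri} gives an $f_*$-fixed class $x_i\in\NE(X_i)\setminus\{0\}$; the minimal face containing $x_i$ (Lemma \ref{lem-minface}) is $f_*$-stable, Theorem \ref{thm-cont} (via the cone theorem with an auxiliary boundary $\ep D$, $D$ effective with $D\cdot x_i<0$, and Lemma \ref{lem-fix-ray}) produces inside it a contractible extremal ray fixed by a power of $f_*$, and the inductive invariant ``$B_i\cdot x=0$ for every $f_*$-fixed $x$'' (propagated by Lemma \ref{lem-cone-j2}) forces that ray to be $B_i$-trivial, so $B_i$ descends as a pullback, stays big, and the process stops after at most $\rho(X)$ steps. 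Your scheme also conflicts with the required output in two further ways: MMP steps include flips, which are not ``contractions of extremal rays'' in the sense of \cite[Definition 4.1]{MZ_PG}, and the MMP needs $\Q$-factoriality, so passing to an (equivariant!) $\Q$-factorialization both changes the starting variety $X=X_1$ and itself requires an unproved equivariant lift; the paper instead contracts on possibly non-$\Q$-factorial klt targets, where the cone and contraction theorems still apply.

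Two secondary points. First, your closing descent along $\phi$ is not correct as written: stabilization of the decreasing faces $\{Z:\sum_{j<n}(f_s^j)^*B_s\cdot Z=0\}$ shows those \emph{smaller} faces become $(f_s)_*$-stable, not that every $\phi$-contracted curve ($B_s\cdot C=0$) eventually satisfies $(f_s^n)^*B_s\cdot C=0$; you would have to contract the stabilized smaller face instead, and then re-derive amplifiedness on its base, which again needs an argument of the paper's type (Proposition \ref{prop-a-cri} plus the $B$-triviality bookkeeping). Second, your opening claim that quasi-amplified always implies positive entropy is asserted (``one checks''), whereas the paper leaves exactly this as Question \ref{que-pos} and instead obtains positive entropy of each $f_i$ from the amplified bottom model via Lemmas \ref{lem-amp-pos} and \ref{lem-lift-des}(2); your claim is plausibly repairable by a Perron--Frobenius argument on the dual cone of $\PE^1(X)$ (a nonzero $f_*$-fixed class there pairs to zero with $f^*L-L$, contradicting bigness), but as written it is another unproved step, and in any case it does not touch the main difficulty above.
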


Let $f:X\to X$ be an amplified endomorphism of a projective variety $X$.
Then $X$ has Kodaira dimension $\kappa(X)\le 0$ by taking the equivariant Iitaka fibration (cf.~\cite[Lemma 2.5]{Meng} and \cite[Theorem A]{NZ09}).
Suppose further that $X$ is a smooth complex projective variety with $K_X$ numerically trivial (hence $\kappa(X)=0$).
We may apply Beauville-Bogomolov decomposition (cf.~\cite{Bea}) and have an \'etale cover $(\prod_i X_i)\times A\to X$ where $A$ is an abelian variety and $X_i$ is either a projective Hyperk\"ahler manifold or a strict Calabi-Yau variety.
Moreover, the cover can be chosen as the so called Albanese closure (cf.~\cite[Lemma 2.12]{Na-Zh}) such that $f$ can be lifted equivariantly and then the iteration $f^t$ splits as a product of surjective endomorphisms on $A$ and $X_i$ for some $t>0$ (cf.~\cite[Theorem 4.6]{San}).
We also refer to \cite[Proposition 3.5]{Na-Zh} for a version of the singular case (cf.~Proposition \ref{prop-nz}).

The following result is an application of Theorem \ref{main-thm-qa-a} to the Hyperk\"ahler case. Note that in this case, any surjective endomorphism is an automorphism. 
\begin{theorem}\label{main-thm-hk}
Let $f:X\to X$ be an automorphism of a projective Hyperk\"ahler manifold $X$.
Then the following are equivalent.
\begin{itemize}
\item[(1)] $f$ is of positive entropy.
\item[(2)] $f^*D\not\equiv D$ for any nef $\mathbb{R}$-Cartier divisor $D\not\equiv 0$.
\item[(3)] $f$ is quasi-amplified.
\item[(4)] For some $n>0$, $f^n$ is birationally equivalent to some amplified automorphism $f':X'\to X'$.
\end{itemize}
Moreover, if $f$ is PCD, then all the above are satisfied.
\end{theorem}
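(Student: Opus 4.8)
The plan is to prove $(1)\Leftrightarrow(2)\Leftrightarrow(3)$ by the lattice theory of the Beauville--Bogomolov--Fujiki (BBF) form, deduce $(3)\Leftrightarrow(4)$ from Theorem~\ref{main-thm-qa-a}, and settle the last assertion by a Lefschetz fixed-point count. \emph{Set-up and $(1)\Leftrightarrow(2)$.} Let $q$ be the BBF form on $\NS(X)_{\R}=N^{1}(X)_{\R}$; since $X$ is projective it is nondegenerate of signature $(1,\rho-1)$, one has $\Nef(X)\subseteq\overline{\mathcal{C}_{X}}$ for the positive cone $\mathcal{C}_{X}$ (the component of $\{q>0\}$ meeting the ample cone), and every class in $\mathcal{C}_{X}$ is big. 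Now $f^{*}$ is an integral $q$-isometry preserving $\Nef(X)$, so it acts on the hyperbolic space $\PP(\mathcal{C}_{X})$ as an elliptic, parabolic or loxodromic isometry; using the Fujiki relation, the decomposition $H^{2}(X,\R)=\NS(X)_{\R}\oplus\NS(X)_{\R}^{\perp}$, and the fact that the eigenvalues of $f^{*}$ on $\NS(X)_{\R}^{\perp}$ all have modulus one (its $(2,0)\oplus(0,2)$-part of the form is positive and its transcendental $(1,1)$-part is negative definite), $f$ is of positive entropy exactly when $f^{*}|_{\NS(X)_{\R}}$ is loxodromic, in which case $f^{*}$ has a unique real eigenvalue $\lambda>1$ (simple, isotropic eigenvector $v_{+}\in\Nef(X)$), the eigenvalue $\lambda^{-1}$ (isotropic $v_{-}\in\Nef(X)$), and only modulus-one eigenvalues on the negative definite subspace $\langle v_{+},v_{-}\rangle^{\perp}$. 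If this holds and $f^{*}D\equiv D$ with $D$ nef, then $D\in\ker(f^{*}-\id)\subseteq\langle v_{+},v_{-}\rangle^{\perp}$, so $q(D)\le 0$; but $q(D)\ge 0$ as $D$ is nef, hence $D=0$, which is $(2)$. Conversely, if $f$ is not of positive entropy the orbits of $f^{*}|_{\NS(X)_{\R}}$ grow at most polynomially, so a subsequential limit of $\|S_{N}\|^{-1}S_{N}$, $S_{N}=\sum_{k<N}(f^{*})^{k}H$ ($H$ ample, each $S_{N}$ ample), is a nonzero nef $f^{*}$-fixed class, contradicting $(2)$.

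\emph{$(1)\Leftrightarrow(3)$.} If $f^{*}|_{\NS(X)_{\R}}$ is loxodromic, then for $L_{0}=v_{+}-v_{-}$ one has $f^{*}L_{0}-L_{0}=(\lambda-1)v_{+}+(1-\lambda^{-1})v_{-}$, a nonzero nef class of positive $q$-square, hence in $\mathcal{C}_{X}$ and big; since $L\mapsto f^{*}L-L$ is continuous and bigness is open, a rational (then, after scaling, Cartier) class near $L_{0}$ still works, so $f$ is quasi-amplified. Conversely assume $B:=f^{*}L-L$ is big but $f$ is not of positive entropy, and fix an ample $A$. Each $(f^{i})^{*}B$ is big, so $q\bigl((f^{i})^{*}B,A\bigr)=q\bigl(B,(f^{-i})^{*}A\bigr)>0$; choosing $A$ suitably, this is (up to a positive constant) of the same polynomial order in $i$ as $\|(f^{-i})^{*}A\|$, so $\sum_{i<k}q\bigl(B,(f^{-i})^{*}A\bigr)$ grows strictly faster than $\|(f^{-k})^{*}A\|$ (unboundedly, in the elliptic case). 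But $\sum_{i<k}(f^{i})^{*}B=(f^{k})^{*}L-L$, whence $\sum_{i<k}q\bigl(B,(f^{-i})^{*}A\bigr)=q\bigl(L,(f^{-k})^{*}A\bigr)-q(L,A)\le\|L\|\,\|(f^{-k})^{*}A\|+|q(L,A)|$, a contradiction.

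\emph{$(3)\Leftrightarrow(4)$.} As $X$ is smooth with $K_{X}\sim 0$ it is of klt Calabi--Yau type (take $\Delta=0$), so by Theorem~\ref{main-thm-qa-a}, applied to the quasi-amplified $f$, some power $f^{n}$ descends along an $f^{n}$-equivariant birational morphism $\pi\colon X\to X_{r}$ to an amplified $f_{r}$ on the normal projective variety $X_{r}$; since $f$ is an automorphism the degrees are preserved, so $f_{r}$ is a birational, hence finite, surjective endomorphism of a normal projective variety, i.e.\ an automorphism --- this is $(4)$. For $(4)\Rightarrow(3)$, take an $f^{n}$-equivariant common resolution $p\colon W\to X$, $q\colon W\to X'$ of the birational map, with $G\colon W\to W$ satisfying $pG=f^{n}p$ and $qG=f'q$, and write $H'=f'^{*}L'-L'$ ample with $L'$ Cartier. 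Put $M=q^{*}L'$ and $L=p_{*}M$; then $p^{*}L=M+E$ with $E$ a $p$-exceptional divisor, $\Exc(p)$ is $G$-invariant, and $G^{*}M-M=q^{*}H'$, so $p^{*}\bigl((f^{n})^{*}L-L\bigr)=q^{*}H'+(\text{$p$-exceptional})$ and therefore $(f^{n})^{*}L-L=p_{*}(q^{*}H')$. This is big on $X$, because $q^{*}H'$ is big on $W$ and push-forward of divisors along the birational morphism $p$ does not decrease volume. Hence $f^{n}$, and then $f$ (replace $L$ by $\sum_{i<n}(f^{i})^{*}L$), is quasi-amplified.

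\emph{PCD $\Rightarrow(1)$, and the main obstacle.} Suppose $f$ is PCD (so we may take $k$ uncountable) but not of positive entropy. Then the spectral radius of $f^{*}$ on $H^{2}(X,\Q)$ is $1$; since $\det(f^{*}|_{H^{j}})=\pm 1$ and, by the inequalities $\lambda_{m}\le\lambda_{1}^{m}$, the spectral radius of $f^{*}$ on every $H^{j}(X,\Q)$ is $1$, all its eigenvalues there have modulus one, hence are roots of unity (Kronecker). Replacing $f$ by a power, $f^{*}$ is unipotent on $H^{*}(X,\Q)$, so the topological Lefschetz number $\Lambda(f^{m})=\sum_{j}(-1)^{j}\dim_{\Q}H^{j}(X,\Q)=\chi_{\mathrm{top}}(X)$ is independent of $m$. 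Since $\Per(f)$ is countable, each $\Fix(f^{m})$ is a $0$-dimensional, hence finite, closed subscheme, so the holomorphic Lefschetz fixed-point formula gives $\#\Fix(f^{m})\le\Lambda(f^{m})=\chi_{\mathrm{top}}(X)$. Then $\Per(f)=\bigcup_{m}\Fix(f^{m!})$ is an increasing union of finite sets of cardinality $\le\chi_{\mathrm{top}}(X)$, hence finite --- contradicting its Zariski density in $X$. So $f$ is of positive entropy and the equivalences give all of $(1)$--$(4)$. The delicate part of the argument is packaging the Hyperk\"ahler-specific inputs used for $(1)\Leftrightarrow(2)\Leftrightarrow(3)$ --- that positive entropy is detected by $f^{*}|_{\NS(X)_{\R}}$, that $\Nef(X)\subseteq\overline{\mathcal{C}_{X}}$ and $\mathcal{C}_{X}\subseteq\Bigc(X)$, and the growth estimate in $(3)\Rightarrow(1)$ --- into clean lemmas; once this is done, $(3)\Rightarrow(4)$ is an immediate application of Theorem~\ref{main-thm-qa-a}.
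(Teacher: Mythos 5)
Your proposal is correct in substance, and for (1)$\Leftrightarrow$(2) and (1)$\Rightarrow$(3) it is essentially the paper's argument (Proposition \ref{prop-hk-null} together with Lemma \ref{lem-hk-sum-big}, and the class $D=D_1-D_2$ built from the two Perron--Frobenius eigenvectors). Where you genuinely diverge is in how the cycle is closed. The paper never proves (3)$\Rightarrow$(1) or (4)$\Rightarrow$(3) directly: it goes (3)$\Rightarrow$(4) by Theorem \ref{thm-qa-a} and then (4)$\Rightarrow$(1) by the general Lemmas \ref{lem-amp-pos} and \ref{lem-lift-des}, and it settles the PCD claim by quoting Lemma \ref{lem-pcd-pos}, which you re-prove. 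You instead prove (3)$\Rightarrow$(1) by a telescoping growth estimate for the Beauville--Bogomolov--Fujiki pairing and (4)$\Rightarrow$(3) by pushing forward $q^*H'$ along a common model of the graph. Your route is more self-contained and hyperk\"ahler-specific, and the (4)$\Rightarrow$(3) step has independent interest (quasi-amplifiedness descends along the birational equivalence, in the spirit of the quasi-polarized arguments the introduction alludes to); the paper's route is shorter because it recycles lemmas already established for arbitrary projective varieties.

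A few asserted steps should be nailed down, though none is fatal. In (3)$\Rightarrow$(1), the comparison $q\bigl(B,(f^{-i})^*A\bigr)\ge c\,\|(f^{-i})^*A\|$ needs $q(B,D)>0$ for every nonzero nef $D$, which rests on the standard hyperk\"ahler fact that $q(E,D)\ge 0$ for $E$ effective and $D$ nef, combined with Lemma \ref{lem-hk-sum-big} and compactness of the unit sphere of $\Nef(X)$; cite or prove this, or simply fall back on the paper's (4)$\Rightarrow$(1). Similarly, the subsequential-limit argument in (2)$\Rightarrow$(1) needs the lower bound on $\|S_N\|$ coming from the fact that $\Nef(X)$ contains no line (or just use Perron--Frobenius as in Proposition \ref{prop-hk-null}). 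In (3)$\Rightarrow$(4), record why $f_r$ is an automorphism: $\deg f_r=\deg f^n=1$, a surjective endomorphism of a projective variety is finite (if it contracted a curve $C$, pick $D$ with $f_r^*D\equiv A$ ample, possible since $f_r^*$ is bijective on $\N^1(X_r)$, and get $0<A\cdot C=D\cdot (f_r)_*C=0$), and then Zariski's main theorem applies. Finally, in the PCD step the inequality $\sharp\Fix(f^m)\le\Lambda(f^m)$ is the topological Lefschetz formula plus positivity of local indices at isolated holomorphic fixed points, not the holomorphic Lefschetz formula; this whole step can be replaced by a citation of Lemma \ref{lem-pcd-pos}, as the paper does.
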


\begin{remark}
\begin{enumerate}
\item Oguiso \cite[Theorem 4.1]{Ogu15} constructed an automorphism $f:S\to S$ of a projective K3 surface $S$ with Picard number $2$, such that no eigenvalue of $f^*|_{\NS(S)}$ is $1$.
In particular, $f$ is amplified by noting that $f^*-\id$ on $\NS(S)\otimes_{\mathbb{Z}}\mathbb{R}$ is surjective. 
We refer to Corollary \ref{cor-k3} and Examples \ref{exa-k3-1} and \ref{exa-k3-2} for further discussion on the case of projective K3 surfaces.
\item There are three types of automorphisms, characterized by the behaviour of the linear automorphism $\varphi:=f^*|_{H^*(X,\mathbb{Z})}$ (cf.~\cite{Can14}).
If $\varphi$ has finite order, then $f$ is elliptic.
Otherwise, $f$ is either parabolic or loxodromic: it is parabolic if $\varphi$ has infinite order, but none of its eigenvalues has modulus $>1$; it is loxodromic if some eigenvalue of $\varphi$ has modulus $>1$.
Note that loxodromic automorphism is equivalent to automorphism of positive entropy.
\end{enumerate}
\end{remark}

Next, we consider another important case: the abelian varieties.
In this case, ``quasi-amplified'' is just ``amplified'' since any big divisor of an abelian variety is ample.
Krieger and Reschke \cite[Proposition 2.5]{KR} gave a characterization of PCD isogenies. 
Here, we provide a similar criterion of amplified endomorphisms for comparison. 

\begin{theorem}\label{main-thm-ab1}(cf.~Theorems \ref{thm-ab-amp-cri} and \ref{thm-pcd-cri})
Let $f:A\to A$ be a surjective endomorphism of an abelian variety $A$.
Then the following holds.
\begin{itemize}
\item[(1)] $f$ is amplified if and only if no eigenvalue of $f^*|_{H^1(A,\mathcal{O}_A)}$ is of modulus $1$.
\item[(2)] $f$ is PCD if and only if no eigenvalue of $f^*|_{H^1(A,\mathcal{O}_A)}$ is a root of unity.
\end{itemize}
\end{theorem}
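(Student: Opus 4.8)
The plan is to handle the two statements separately, but in both to begin from the structural fact that a surjective endomorphism of an abelian variety is a translation composed with an isogeny: write $f=t_b\circ g$ with $g\colon A\to A$ an isogeny, $g(0)=0$, and $b=f(0)$. Since translations act trivially on $\NS(A)$ and on $H^1(A,\mathcal O_A)$, we have $f^*=g^*$ on both; hence the right-hand conditions in (1) and (2), and the property ``amplified'' (a condition on the Cartier class $L$ modulo numerical equivalence and on $f^*|_{\NS(A)}$), all depend only on $g$. By a spreading-out/Lefschetz-principle argument I may assume $k=\mathbb C$. Let $V=\operatorname{Lie}(A)$ and let $\alpha_1,\dots,\alpha_g$ be the eigenvalues of the analytic representation of $g$ on $V$; then $g^*$ has eigenvalues $\alpha_i$ on $H^0(A,\Omega^1_A)$ and $\overline{\alpha_i}$ on $H^1(A,\mathcal O_A)$, their union being the multiset of roots of the integral characteristic polynomial $P_f$. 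So I must prove: $f$ is amplified $\iff$ $|\alpha_i|\ne1$ for all $i$; and $f$ is PCD $\iff$ no $\alpha_i$ is a root of unity (both right-hand conditions being manifestly invariant under $\alpha_i\mapsto\overline{\alpha_i}$).

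For (1), I would rephrase ``amplified'' as a solvability statement for a discrete Lyapunov (Stein) equation. Fix an ample $H_0\in\NS(A)$, with associated Rosati involution $\dagger$ on $\operatorname{End}^0(A)$; then $L\mapsto\phi_{H_0}^{-1}\phi_L$ identifies $\NS(A)_{\mathbb R}$ with $\{x\in\operatorname{End}^0(A)\otimes\mathbb R:x^\dagger=x\}$, realized through the analytic representation as a space of $H_0$-self-adjoint operators on $V$, in which the ample cone is the positive-definite locus and $g^*$ acts by $\psi\mapsto g^\dagger\psi g$. Thus $f$ is amplified iff there is a $\dagger$-symmetric $\psi\in\operatorname{End}^0(A)\otimes\mathbb R$ with $g^\dagger\psi g-\psi$ positive definite. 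The implication ``amplified $\Rightarrow$ no $|\alpha_i|=1$'' is immediate: evaluating the Hermitian form attached to $g^\dagger\psi g-\psi$ on a genuine eigenvector of $g$ with eigenvalue of modulus $1$ yields $0$, contradicting positivity. For the converse, assuming all $|\alpha_i|\ne1$, I would split $V=V_+\oplus V_-$ into the expanding and contracting parts of $g$, let $P_\pm$ be the projection of $V$ onto $V_\pm$ along $V_\mp$, and set $\psi=\psi_+-\psi_-$ with the convergent sums $\psi_+=\sum_{k\ge1}P_+^{\dagger}(g^{\dagger})^{-k}g^{-k}P_+$ and $\psi_-=\sum_{k\ge0}P_-^{\dagger}(g^{\dagger})^{k}g^{k}P_-$ (here $g^{-1}\in\operatorname{End}^0(A)$ since $g$ is an isogeny). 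A short telescoping computation, using $P_\pm g=gP_\pm$, gives $g^\dagger\psi g-\psi=P_+^\dagger P_++P_-^\dagger P_-$, which is positive definite since $P_++P_-=1$; and each summand of $\psi_\pm$ has the form $A^\dagger A$, hence is $\dagger$-symmetric.

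\textbf{The main obstacle} is to keep $\psi$ inside the small space $\NS(A)_{\mathbb R}$ rather than merely inside the space of all Hermitian forms on $V$: in the latter the statement is the classical fact that a linear map with no eigenvalue on the unit circle admits an adapted indefinite metric, but $\NS(A)_{\mathbb R}$ is usually a proper subspace of $H^{1,1}(A)_{\mathbb R}$, and the hyperbolic splitting $V=V_+\oplus V_-$ is typically not defined over $\mathbb Q$ (so $A$ does not split up to isogeny). The point that makes the construction work is that the projections $P_\pm$, although not rational, are \emph{real polynomials in $g$}: the modulus-$>1$ part of $g$ acting on $H_1(A,\mathbb R)$ is a real $g$-invariant subspace, projection onto it along its complement is a real polynomial in $g$, hence lies in $\operatorname{End}^0(A)\otimes\mathbb R$, and that same polynomial evaluated at $g$ and acting on $V$ is exactly $P_+$. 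Consequently $P_\pm$, $P_\pm^\dagger$, $g^{\pm1}$, $g^\dagger$ all lie in $\operatorname{End}^0(A)\otimes\mathbb R$, every term of the series for $\psi_\pm$ is a $\dagger$-symmetric element of $\operatorname{End}^0(A)\otimes\mathbb R$, and since $\NS(A)_{\mathbb R}$ is closed the limit lies in $\NS(A)_{\mathbb R}$; an openness perturbation then produces an honest Cartier $L$ with $g^*L-L$ ample.

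For (2), the key observation is that $g^n-1$ is an isogeny iff $\det(g^n-1\mid V)=\prod_i(\alpha_i^n-1)\ne0$, so \emph{no} $\alpha_i$ is a root of unity iff $g^n-1$ is an isogeny for \emph{every} $n\ge1$. If this holds, then $g-1$ is an isogeny, so choosing $c$ with $(g-1)(c)=-b$ conjugates $f$ by $t_c$ to $g$; replacing $f$ by $g$ (this preserves PCD), $\Per(g)=\bigcup_{n\ge1}\ker(g^n-1)$ is a countable union of finite subgroups, hence countable, while it contains the Zariski-dense torsion subgroup $A_{\mathrm{tors}}$ (every torsion point has finite $g$-orbit), so $f$ is PCD; for isogenies this recovers \cite[Proposition 2.5]{KR}. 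Conversely, if some $\alpha_{i_0}$ is a root of unity of order $m$, then $\ker(g^{N}-1)$ is a positive-dimensional abelian subvariety whenever $m\mid N$; writing $f^{N}=t_{b_N}\circ g^{N}$ with $b_N=\sum_{j<N}g^{j}(b)$, the locus $\Fix(f^{N})=(g^{N}-1)^{-1}(-b_N)$ is empty or a coset of $\ker(g^{N}-1)$. If $f$ has no periodic point at all then $\Per(f)=\varnothing$ is not Zariski dense; otherwise a periodic point $x_0$ of period $p$ lies in $\Fix(f^{pm})$, which is then a nonempty coset of the positive-dimensional group $\ker(g^{pm}-1)$ and hence uncountable over the (uncountable) base field, so $\Per(f)$ is uncountable. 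Either way $f$ is not PCD, which completes (2).
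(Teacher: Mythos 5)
Your part (1) is correct but follows a genuinely different route from the paper. The paper (Theorem \ref{thm-ab-amp-cri}) establishes a closed cycle of eight equivalent conditions, passing through the cones $\Pos^1(A)$ and $\Pos^{n-1}(A)$ of positive forms, explicit Jordan-block coefficient computations on $H^{1,1}(A,\mathbb{C})$, and the cone-duality criterion of Proposition \ref{prop-a-cri}; the equivalence ``amplified $\Leftrightarrow$ no eigenvalue of modulus $1$'' is extracted from that cycle. You instead transport everything into $\operatorname{End}^0(A)\otimes\mathbb{R}$ via $L\mapsto\phi_{H_0}^{-1}\phi_L$ and the Rosati involution, and solve a discrete Lyapunov equation $g^{\dagger}\psi g-\psi\gg 0$ by the telescoping series built from the expanding/contracting splitting. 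Your identification of the real issue (keeping $\psi$ inside $\NS(A)_{\mathbb R}$ rather than in all Hermitian forms) and its resolution (the spectral projections $P_{\pm}$ are real polynomials in $g$, hence lie in $\operatorname{End}^0(A)\otimes\mathbb{R}$, which is $\dagger$-stable and contains $g^{-1}$ and $g^{\dagger}$) are exactly what make the construction legitimate, and the telescoping identity $g^{\dagger}\psi g-\psi=P_+^{\dagger}P_++P_-^{\dagger}P_-$ checks out; the perturbation to an integral Cartier divisor is Proposition \ref{prop-ext}(1). What your route buys is a short, self-contained and constructive proof of precisely the stated equivalence; what the paper's route buys is the longer list of equivalent conditions (notably the nef-divisor and $1$-cycle criteria (4)--(8)), which are reused later, e.g.\ in Proposition \ref{prop-5lem} and Theorem \ref{thm-pcd-cri2}.

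In part (2) your skeleton is close to the paper's (reduce to an isogeny --- you by conjugating with a translation when $g-\id$ is an isogeny, the paper by Lemma \ref{lem-pcd+a} --- and in the converse direction produce an infinite $\Fix(f^{N})$; the paper then simply quotes \cite[Proposition 2.5]{KR} for the isogeny case, which you reprove). However, your density step contains a genuine error: $\Per(g)$ does \emph{not} contain $\Tor(A)$ in general. A torsion point has finite forward orbit and is therefore preperiodic, but it need not be periodic, because $g$ need not act injectively on torsion: take $g=[2]$ (all eigenvalues equal to $2$, so squarely in your ``good'' case) and $x$ a nonzero $2$-torsion point; then $g^{n}(x)=0\neq x$ for all $n\ge 1$, so $x\notin\Per(g)$, and indeed $\Per([2])$ consists only of the odd-order torsion. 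The correct statement --- and the content of the paper's Lemma \ref{lem-iso-pd} --- is that $\Per(g)$ contains all torsion points of order prime to $\deg g$, on which $g$ is bijective, and that this smaller set is still Zariski dense; the density itself requires an argument (the paper uses Poincar\'e's complete reducibility). So the gap is localized and repairable by substituting that lemma, but as written the inclusion you invoke is false. The rest of (2) --- countability of $\bigcup_n\Ker(g^n-\id)$ when every $g^n-\id$ is an isogeny, and, when some eigenvalue is a root of unity, that $\Fix(f^{pm})$ is either empty or an uncountable coset of the positive-dimensional group $\Ker(g^{pm}-\id)$, contradicting Proposition \ref{prop-ext}(3) --- is sound.
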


\begin{remark}
When $A$ is an abelian surface, Krieger and Reschke \cite[Propositions 2.5 and 4.3]{KR} showed that an isogeny $f$ is amplified if and only if the set of preperiodic points $\Prep(f)=\Tor(A)$ where $\Tor(A)$ is the set of torsion points in $A$, which is equivalent to saying that $f$ is PCD.
The key reason is due to the phenomenon that any eigenvalue of $f^*|_{H^1(A,\mathcal{O}_A)}$ of modulus $1$ is necessarily a root of unity for when $\dim(A)=2$.
Indeed, let $\alpha, \beta$ be the eigenvalues of $f^*|_{H^1(A,\mathcal{O}_A)}$.
Then the eigenvalues of $f^*|_{H^1(A,\mathbb{Z})}$ are $\alpha, \beta, \overline{\alpha}, \overline{\beta}$.
Once some of them has modulus $1$, so are all of them and hence all are roots of unity by Kronecker's theorem.
However, in the higher dimensional cases, this phenomenon no longer holds due to the existence of the Salem polynomials.
So we can construct a PCD endomorphism which is not amplified induced by any Salem polynomial; see Example \ref{exa-salem}.
\end{remark}

The following result gives another characterizations and comparison of  amplified and PCD endomorphisms in terms of divisors.
\begin{theorem}\label{main-thm-ab2}(cf.~Theorems \ref{thm-ab-amp-cri} and \ref{thm-pcd-cri2})
Let $f:A\to A$ be a surjective endomorphism of an abelian variety $A$.
Then the following hold.
\begin{itemize}
\item[(1)] $f$ is amplified if and only if $f^*D\not\equiv D$ for any nef $\R$-Cartier divisor $D\not\equiv 0$.
\item[(2)] $f$ is PCD if and only if $f^*D\not\equiv D$ for any nef Cartier divisor $D\not\equiv 0$.
\end{itemize}
\end{theorem}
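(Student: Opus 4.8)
The plan is to derive Theorem~\ref{main-thm-ab2} from the cohomological criterion Theorem~\ref{main-thm-ab1}, using the Appell--Humbert description of line bundles on an abelian variety. Since $f$ is surjective it is finite; writing $a=f(0)$, the map $h:=t_{-a}\circ f$ fixes the origin, hence is a group homomorphism by rigidity, and is surjective, hence an isogeny. As translations act trivially on $N^1(A)$, on $H^1(A,\mathcal{O}_A)$ and on $H^1(A,\mathbb Z)$, we have $f^*=h^*$ everywhere relevant. I identify $\NS(A)_{\mathbb R}=N^1(A)_{\mathbb R}$ with the real vector space of Hermitian forms on $T_0A$, so that the nef cone becomes the positive semidefinite forms and $h^*$ becomes the substitution $H\mapsto H(\bar h\,\cdot\,,\bar h\,\cdot\,)$, where $\bar h\in\operatorname{GL}(T_0A)$ is the analytic representation of $h$. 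If $\mu_1,\dots,\mu_g$ are the eigenvalues of $\bar h$, then the eigenvalues of $f^*|_{H^1(A,\mathcal{O}_A)}$ are $\overline{\mu_1},\dots,\overline{\mu_g}$, those of $f^*|_{H^1(A,\mathbb Z)}$ are the $\mu_i$ together with the $\overline{\mu_i}$, and (for orientation) those of $h^*$ on $\NS(A)_{\mathbb C}\subseteq H^{1,1}(A)$ are the products $\mu_i\overline{\mu_j}$.

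\emph{The ``only if'' directions.} Suppose $D\not\equiv0$ is nef with $f^*D\equiv D$; as a nonzero positive semidefinite Hermitian form with $D(\bar h\,\cdot\,,\bar h\,\cdot\,)=D$, it has radical $V_0=\{v:D(v,v)=0\}\subsetneq T_0A$, which is $\bar h$-stable, and $\bar h$ induces on the nonzero positive definite quotient $(T_0A/V_0,\overline D)$ a unitary transformation; hence some $\mu_i$ has modulus $1$, so some eigenvalue of $f^*|_{H^1(A,\mathcal{O}_A)}$ has modulus $1$, and $f$ is not amplified by Theorem~\ref{main-thm-ab1}(1). If moreover $D$ is Cartier, then $\operatorname{Im}D$ takes integral values on $H_1(A,\mathbb Z)$ (this is what $D\in\NS(A)$ means), hence is a rational alternating form on $H_1(A,\mathbb Q)$, so its radical, which equals $V_0$, is defined over $\mathbb Q$; thus $V_0\cap H_1(A,\mathbb Z)$ is a full $\bar h$-stable sublattice of $V_0$, and $A_0:=V_0/(V_0\cap H_1(A,\mathbb Z))$ is an $h$-stable abelian subvariety. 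Then $h$ descends to $h_1$ on $A_1:=A/A_0$ (of positive dimension, since $D\neq0$), with analytic representation the unitary map induced on $T_0A/V_0$; hence the characteristic polynomial of $h_1^*$ on $H^1(A_1,\mathbb Z)$ is monic with integer coefficients and all roots of modulus $1$, so by Kronecker's theorem all its roots are roots of unity. Since the pull-back $H^1(A_1,\mathcal{O}_{A_1})\hookrightarrow H^1(A,\mathcal{O}_A)$ is injective and intertwines $h_1^*$ with $h^*=f^*$, some eigenvalue of $f^*|_{H^1(A,\mathcal{O}_A)}$ is then a root of unity, so $f$ is not PCD by Theorem~\ref{main-thm-ab1}(2).

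\emph{The ``if'' directions.} Again by contraposition. If $f$ is not amplified, then by Theorem~\ref{main-thm-ab1}(1) some $\mu_i$ has modulus $1$; picking an eigenvector $\phi$ of the transpose of $\bar h$ with eigenvalue $\mu_i$, the rank-one positive semidefinite form $H(x,y):=\phi(x)\overline{\phi(y)}$ gives a nonzero nef $\mathbb R$-Cartier class with $f^*H\equiv|\mu_i|^2H=H$. If $f$ is not PCD, then by Theorem~\ref{main-thm-ab1}(2) some $\mu_i$ is a root of unity, so after replacing $f$ by $f^n$ for suitable $n$ we may assume $1$ is an eigenvalue of $h^*|_{H^1(A,\mathbb Z)}$, equivalently $h-\id$ is not surjective. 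Then $B:=(h-\id)(A)$ is a proper abelian subvariety, $h$ (hence $f$) descends to a \emph{translation} on $A':=A/B$, and for the quotient $q:A\to A'$ and any ample Cartier divisor $H'$ on $A'$ the class $D:=q^*H'$ is a nonzero nef Cartier class with $f^*D\equiv D$ (translations being trivial on $N^1(A')$). Passing back from $f^n$ to $f$, the class $\sum_{j=0}^{n-1}(f^*)^jD$ is nef, Cartier, satisfies $f^*\big(\sum_{j}(f^*)^jD\big)\equiv\sum_{j}(f^*)^jD$, and is nonzero because $D$ is one of its summands while the nef cone of a projective variety contains no line; this contradicts the hypothesis.

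The main obstacle is the asymmetry, in the ``if'' direction, between producing an $\mathbb R$-Cartier fixed nef class (enough for (1)) and a genuinely \emph{Cartier} one (needed for (2)): the rank-one Hermitian form attached to an eigenvector of $\bar h$ is in general irrational, so for the PCD statement one must instead exhibit an \emph{integral} fixed nef class, which is exactly why the geometric inputs enter — the rationality of the radical of an integral Hermitian form (producing $A_0$ in the ``only if'' direction) and the descent of $h$ to a translation on $A/(h-\id)(A)$ (in the ``if'' direction) — together with Kronecker's theorem, which is what upgrades ``modulus $1$'' to ``root of unity''. Setting up the correspondence between $h^*$ on $\NS(A)_{\mathbb R}$ and $\bar h$, and checking compatibility of all the pull-backs and descents with the endomorphisms, is routine.
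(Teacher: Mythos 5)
Your part (2) and the ``only if'' halves are essentially sound, and they follow a genuinely different route from the paper: you deduce everything from the cohomological criterion (Theorem \ref{main-thm-ab1}) via the Appell--Humbert picture, the rationality of the radical of an integral Hermitian form, Kronecker's theorem, and the quotient $A/(h-\id)(A)$, whereas the paper proves Theorem \ref{thm-pcd-cri2} by passing to the dual abelian variety (Proposition \ref{prop-dual}), extracting a fixed nef Cartier class from an infinite fixed locus of $(f^{\vee})^n$, and running an induction on $\dim A$ using base-point-freeness of the fixed class and descent of the PCD property. (Minor point: your argument is transcendental, so you should say a word about reducing to $k=\mathbb{C}$ via Proposition \ref{prop-ext}, as the paper implicitly does in Section 6.) However, there is a genuine gap in your ``if'' direction of part (1). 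Your opening identification of $\NS(A)_{\mathbb{R}}=\N^1(A)$ with \emph{all} Hermitian forms on $T_0A$ is false: $\N^1(A)$ is only the real span of the Hermitian forms whose imaginary part is $\mathbb{Z}$-valued on $H_1(A,\mathbb{Z})$, a subspace of dimension $\rho(A)$, which is in general much smaller than $g^2$ (e.g.\ $\rho(A)=1$ for a very general $A$). Consequently, the rank-one form $H(x,y)=\phi(x)\overline{\phi(y)}$ attached to an eigenvector for a modulus-one eigenvalue of $\bar h$ is a nonzero $f^*$-fixed positive semidefinite $(1,1)$-form, but there is no reason whatsoever for it to lie in $\N^1(A)$, i.e.\ to be an $\mathbb{R}$-Cartier class. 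What you have actually proved is the failure of condition (8) of the paper's Theorem \ref{thm-ab-amp-cri} (no fixed positive $(1,1)$-form), not the failure of condition (6) (no fixed nonzero nef class in $\N^1(A)$), and it is precisely the gap between these two conditions that makes this implication nontrivial. You flag this rationality/integrality problem yourself for part (2), but it already bites in part (1): ``nef $\mathbb{R}$-Cartier'' still means a class in $\NS(A)\otimes\mathbb{R}$, not an arbitrary positive form.

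Nor is this a one-line fix: the $f^*$-fixed subspace of the space of Hermitian forms contains a positive semidefinite element by your construction, but its intersection with $\N^1(A)$ could a priori be zero, and the eigenvalue $1$ of $f^*$ on $H^{1,1}(A,\mathbb{R})$ need not survive as an eigenvalue of $f^*|_{\N^1(A)}$. To produce a fixed nonzero \emph{nef class in $\N^1(A)$} from ``some $|\mu_i|=1$'' one needs an argument that uses the rational structure of $\NS(A)$ — for instance realizing $\NS_{\mathbb{Q}}(A)$ inside $\operatorname{End}^0(A)$ via a Rosati involution and working there, or arguing as in the paper through the duality between conditions (4) and (6) of Theorem \ref{thm-ab-amp-cri} together with Proposition \ref{prop-a-cri} and abelian-variety-specific input. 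As written, your proof of the ``if'' direction of (1) does not establish the statement; either supply such an argument or invoke the equivalence $(1)\Leftrightarrow(6)$ of Theorem \ref{thm-ab-amp-cri} directly (at the cost of no longer giving an independent proof of that part).
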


Let $f:X\to X$ be a surjective endomorphism of a projective variety $X$ over $k$.
When the base field $k$ is uncountable, Amerik and Campana \cite{AC} showed that $f$ has a Zariski dense orbit if and only if there is no dominant rational map $\pi:X\dasharrow \mathbb{P}^1$ such that $\pi\circ f=f$.
When $k$ is countable, this equivalence remains unknown (cf.~\cite[Conjecture 7.14]{MS}) except the case when $X$ is an abelian variety proved by Ghioca and Scanlon \cite[Theorem 1.2]{GS}.
In the following, we show that they are also equivalent to ``PCD endomorphisms'' for self-isogenies of abelian varieties.

\begin{theorem}\label{main-thm-ab3}
Let $f:A\to A$ be an isogeny of an abelian variety $A$.
Then the following are equivalent.
\begin{itemize}
\item[(1)] $f$ is PCD.
\item[(2)] $f$ has a Zariski dense orbit.
\item[(3)] There is no dominant rational map $\pi:A\dasharrow \mathbb{P}^1$ such that $\pi\circ f=f$.
\end{itemize}
The equivalence of (2) and (3) are due to Ghioca and Scanlon \cite[Theorem 1.2]{GS}.
\end{theorem}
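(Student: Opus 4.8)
The equivalence $(2)\Leftrightarrow(3)$ is the Zariski dense orbit theorem of Ghioca--Scanlon \cite[Theorem 1.2]{GS} for abelian varieties, applied to the dominant self-map $f$. Since ``PCD'' is by definition tested after enlarging the base field, and both $(2)$ and $(3)$ are insensitive to such an enlargement---for $(3)$ because over an abelian variety the non-existence of a nonconstant $f$-invariant rational function descends, and for $(2)$ via $(2)\Leftrightarrow(3)$ just quoted---we may assume $k$ is uncountable. It then suffices to prove $(1)\Leftrightarrow(3)$. Write $f=T_{f(0)}\circ\psi$ with $\psi$ an isogeny, and recall that $f^*$ and $\psi^*$ agree on $H^1(A,\mathcal{O}_A)$ since translations act trivially on cohomology. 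I will prove the two contrapositives; replacing $f$ by a positive power is harmless throughout, as $\Per(f)=\Per(f^m)$ and the condition ``no eigenvalue of $f^*|_{H^1(A,\mathcal{O}_A)}$ is a root of unity'' is stable under powers.

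$\neg(3)\Rightarrow\neg(1)$. Suppose $f^*g=g$ for some nonconstant rational function $g$ on $A$. From $\psi\circ T_b=T_{\psi(b)}\circ\psi$ one checks that the period group $P:=\{b\in A:\ T_b^*g=g\}$ satisfies $\psi^{-1}(P)=P$, so $B:=P^\circ$ is an abelian subvariety, $\psi(B)=B$, and $B\ne A$. Hence $f$ descends along $q:A\to A':=A/B$ to a surjective endomorphism $f'$ of the positive-dimensional abelian variety $A'$, and $g$ descends to a nonconstant $f'$-invariant rational function $\bar g$ on $A'$ whose own period group is \emph{finite}. Taking a Stein-type factorisation $A'\dasharrow C\to\mathbb{P}^1$ of $\bar g$ and replacing $f$ by a power, we may assume $f'$ preserves every general fibre $F$ of $A'\dasharrow C$. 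Because $\bar g$ has finite period group, $F$ has finite translation-stabiliser in $A'$, hence is of general type; since a dominant self-map of a variety of general type is birational, $f'|_F$ has degree one, so $f'$ (and therefore $\psi'$) is an automorphism of $A'$. Now the group $G:=\{\phi\in\Aut(A'):\ \bar g\circ\phi=\bar g\}$ acts faithfully on the general-type variety $F$ and is therefore finite, so $\psi'^*$ has finite order on $H^1(A',\mathcal{O}_{A'})$; pulling back along the injective, $f$-equivariant map $q^*:H^1(A',\mathcal{O}_{A'})\hookrightarrow H^1(A,\mathcal{O}_A)$ shows that $f^*|_{H^1(A,\mathcal{O}_A)}$ has a root-of-unity eigenvalue, so $f$ is not PCD by Theorem \ref{main-thm-ab1}(2).

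$\neg(1)\Rightarrow\neg(3)$. Suppose $f$ is not PCD. By Theorem \ref{main-thm-ab1}(2) some eigenvalue of $\psi^*|_{H^1(A,\mathcal{O}_A)}$ is a root of unity, so after replacing $f$ by a power, $1$ is an eigenvalue of $\psi^*$ on $H^1(A,\mathbb{Z})$; dually, $\psi-\id$ has image contained in a proper abelian subvariety $B''\subsetneq A$, and on the positive-dimensional quotient $A'':=A/B''$ the map $\psi$ (hence $f$, up to the power) induces the identity. Thus every rational function pulled back from $A''$ is $\psi$-invariant, so the fixed field $k(A)^{\psi^*}$ has transcendence degree $\ge\dim A''\ge 1$; since passing from $k(A)^{\psi^{m\ast}}$ to $k(A)^{\psi^*}$ is the taking of invariants under a finite cyclic group, hence a finite field extension, $k(A)^{f^*}$ also has transcendence degree $\ge 1$, and any nonconstant element of it gives a dominant rational map $A\dasharrow\mathbb{P}^1$ violating $(3)$.

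The crux of the argument is the geometric step inside $\neg(3)\Rightarrow\neg(1)$: reducing, via the period group, to an $f$-invariant fibration whose general fibre is of general type, and then using the rigidity of self-maps of varieties of general type to force the cohomological action of $f$ to be quasi-unipotent. The bookkeeping needed to carry the translation term $f(0)$ through this reduction (and through the descent to $A''$ in the converse) also requires some care.
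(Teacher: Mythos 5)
Your treatment of $(2)\Leftrightarrow(3)$ via Ghioca--Scanlon is exactly the paper's. Your argument for $\neg(3)\Rightarrow\neg(1)$ takes a genuinely different route from the paper's Lemma \ref{lem-pcd-fib}, which applies Pink--Roessler (Theorem \ref{thm-pcd-PR}) to an invariant fibre component to produce an abelian subvariety $B$ and then derives a contradiction on the elliptic-curve quotient $A/B$ via Proposition \ref{prop-5lem}; your route through the period group, the Stein factorisation and general-type fibres avoids Pink--Roessler and is essentially workable, but two justifications need repair. First, finiteness of the translation-stabiliser of a single general fibre $F$ does not follow formally from finiteness of the period group of $\bar g$; one needs, e.g., that $A'$ has only countably many abelian subvarieties together with constructibility in the base, or one should argue on the generic fibre. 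Second, an element of $G$ need not preserve $F$ (it may permute the components of a fibre of $\bar g$), and even when it does, an automorphism of $A'$ can fix a positive-dimensional subvariety pointwise without being the identity, so faithfulness on $F$ is unclear; both points are repaired by restricting the subgroup of $G$ acting trivially on $C$ to the generic fibre of $A'\dasharrow C$, where restriction is injective because the generic fibre is dense and the automorphism group of the general-type generic fibre is finite.

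The genuine gap is in $\neg(1)\Rightarrow\neg(3)$: you lose the translation part of $f$. On $A''=A/B''$, where $B''$ is the image of $\psi-\id$, the isogeny part $\psi$ does induce the identity, but $f$ induces the translation by the image of $f(0)$, so functions pulled back from $A''$ are only $\psi$-invariant; your closing sentence compares $k(A)^{\psi^{m*}}$ with $k(A)^{\psi^{*}}$ and never bridges to $k(A)^{f^{*}}$, and no such bridge exists in general. Concretely, take $f=T_a$ on an elliptic curve with $a$ non-torsion: then $f$ is not PCD (indeed $\Per(f)=\emptyset$, and all eigenvalues of $f^*|_{H^1(A,\mathcal{O}_A)}$ equal $1$), yet any rational function $g$ with $g\circ f=g$ is invariant under the Zariski closure of $\langle a\rangle$, which is the whole curve, hence constant, so $(3)$ holds. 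Thus this implication cannot be obtained as you wrote it; it is valid when $f$ is an isogeny, or whenever after an iterate $f$ itself (not merely $\psi$) becomes the identity on a positive-dimensional quotient. Be aware that this is precisely the point the paper's own proof of ``not PCD $\Rightarrow$ no dense orbit'' passes over: it invokes the first part of the proof of Theorem \ref{thm-pcd-cri2}, where $f$ is replaced by its isogeny part, and asserts $f^n|_B=\id_B$, which the same translation example does not satisfy. So in any complete write-up the translation term on the quotient must be confronted explicitly (or the statement restricted to isogenies); your draft makes the illegitimate identification explicit at ``hence $f$, up to the power, induces the identity,'' and that step is where the proof fails.
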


Finally, we can show that the PCD and amplified properties can be preserved via the Albanese map in the following setting, which gives a partial answer to \cite[Question 1.10]{KR} (cf.~Question \ref{que-des}).
We refer to \cite[Theorem 1.2 and Section 5]{CMZ} for the case of polarized endomorphisms.

\begin{theorem}\label{main-thm-des}
Let $f:X\to X$ be a PCD (resp.~quasi-amplified) surjective endomorphism of a klt projective variety $X$ with $K_X\equiv 0$.
Then the Albanese morphism $\alb_X:X\to \Alb(X)$ is surjective with $(\alb_X)_\ast \mathcal{O}_X=\mathcal{O}_{\Alb(X)}$. 
Furthermore, the induced endomorphism $g:=f|_{\Alb(X)}:\Alb(X)\to \Alb(X)$ is PCD (resp.~amplified).
\end{theorem}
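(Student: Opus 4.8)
The plan is to reduce the statement about $g$ to the case of abelian varieties --- where Theorem~\ref{main-thm-ab1} already characterises both the amplified and the PCD properties through the eigenvalues of the action on $H^1(\mathcal O)$ --- and to carry out the reduction by means of the Beauville--Bogomolov type decomposition for klt varieties with numerically trivial canonical class recorded in Proposition~\ref{prop-nz}. That $\alb_X$ is surjective with $(\alb_X)_*\mathcal O_X=\mathcal O_{\Alb(X)}$ is a statement about $X$ alone which is well known for klt $X$ with $K_X\equiv 0$ (it also drops out of the decomposition recalled below, via Stein factorisation and the normality of $\Alb(X)$); granting it, $f$ descends by the universal property of the Albanese to a surjective endomorphism $g=f|_{\Alb(X)}$ with $\alb_X\circ f=g\circ\alb_X$.

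For the main assertion, observe first that replacing $f$ by a positive power affects neither the hypotheses (a power of a quasi-amplified endomorphism is quasi-amplified, since $(f^N)^*L-L=\sum_{j=0}^{N-1}(f^j)^*(f^*L-L)$ is a sum of big divisors, and $\Per(f^N)=\Per(f)$) nor the conclusions. Hence by Proposition~\ref{prop-nz} we may assume, after a further power so that no factors are permuted, that there is a finite quasi-\'etale cover $\pi\colon X'\to X$ with $X'\cong A_0\times Y$, where $A_0$ is an abelian variety and $Y$ is a product of (possibly singular) Hyperk\"ahler and Calabi--Yau factors, so that $H^1(Y,\mathcal O_Y)=0$ and hence $\Pic^0(Y)=0$, and $f$ lifts to $f'=h\times f_Y$ on $A_0\times Y$ with $h\colon A_0\to A_0$ and $f_Y\colon Y\to Y$; in particular $\Alb(X')=A_0$, the morphism $\alb_{X'}$ is the first projection, and $f'|_{A_0}=h$. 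Now $f'$ inherits the relevant property of $f$. If $f$ is quasi-amplified via a big Cartier divisor $L$, then $\pi^*L$ is a big Cartier divisor and $(f')^*\pi^*L-\pi^*L=\pi^*(f^*L-L)$ is big, so $f'$ is quasi-amplified. If $f$ is PCD, then $\Per(f')\subseteq\pi^{-1}(\Per(f))$ is countable because $\pi$ is finite, while for each $x\in\Per(f)$ of period $n$ the finite set $\pi^{-1}(x)$ is mapped into itself by $f'^n$, so its eventual image under the iterates of $f'^n$ is a nonempty set of $f'$-periodic points; collecting one such point for every $x$ we obtain a subset of $\Per(f')$ whose Zariski closure maps onto $X$ under the finite morphism $\pi$, and therefore, $X'$ being irreducible of dimension $\dim X$, equals $X'$; thus $\Per(f')$ is Zariski dense and $f'$ is PCD.

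Next, $h$ inherits the property of $f'$. Since $\Pic^0(Y)=0$, the seesaw principle gives $\Pic(A_0\times Y)=\mathrm{pr}_1^*\Pic(A_0)\oplus\mathrm{pr}_2^*\Pic(Y)$, so writing $L'=\mathrm{pr}_1^*L_1+\mathrm{pr}_2^*L_2$ one has $(f')^*L'-L'=\mathrm{pr}_1^*(h^*L_1-L_1)+\mathrm{pr}_2^*(f_Y^*L_2-L_2)$; by the K\"unneth formula $h^0$ of a positive multiple of this class is the product of the corresponding $h^0$'s on $A_0$ and on $Y$, so bigness forces $h^*L_1-L_1$ to be big on $A_0$, and since a big divisor on an abelian variety is ample, $h$ is amplified. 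Likewise $\Per(f')=\Per(h)\times\Per(f_Y)$, so $\Per(h)$ is countable and, by density of $\Per(f')$ in the product, Zariski dense in $A_0$, i.e.\ $h$ is PCD. Finally $q:=\Alb(\pi)\colon A_0=\Alb(X')\to\Alb(X)$ is, up to translation, a surjective homomorphism of abelian varieties with $q\circ h=g\circ q$, so $q^*\colon H^1(\Alb(X),\mathcal O)\hookrightarrow H^1(A_0,\mathcal O)$ is injective and intertwines the induced maps; hence every eigenvalue of $g^*|_{H^1(\mathcal O)}$ occurs among the eigenvalues of $h^*|_{H^1(\mathcal O)}$. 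Applying Theorem~\ref{main-thm-ab1}(1): if $h$ is amplified then no eigenvalue of $h^*|_{H^1(\mathcal O)}$ has modulus $1$, hence the same holds for $g$, so $g$ is amplified; applying Theorem~\ref{main-thm-ab1}(2): if $h$ is PCD then no eigenvalue of $h^*|_{H^1(\mathcal O)}$ is a root of unity, hence the same holds for $g$, so $g$ is PCD.

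The delicate point is the bookkeeping of this reduction: verifying that the PCD property survives pullback along a possibly non-Galois quasi-\'etale cover and restriction to the abelian factor, and tracking the Albanese functoriality carefully enough that the cohomological criterion of Theorem~\ref{main-thm-ab1} transfers from $h$ to $g$; once this is arranged, the remaining steps are formal. I expect the quasi-amplified case could alternatively be reduced to $f$ being amplified by first invoking Theorem~\ref{main-thm-qa-a}, but routing through the decomposition treats both cases uniformly.
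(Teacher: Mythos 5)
Your treatment of the dynamical half is correct and close in spirit to the paper's: both start from Proposition \ref{prop-nz}, lift the property along the quasi-\'etale cover, split it off the product, and descend to $\Alb(X)$. Two of your steps genuinely differ in route and both work: for the product you split $\Pic(A_0\times Y)$ using $\Pic^0(Y)=0$ and compare growth of $h^0$ via K\"unneth, where the paper simply restricts a big class $A+E$ to a general fiber (Lemma \ref{lem-prod}); and for the final descent from $h$ on $A_0=\Alb(X')$ to $g$ you transfer the eigenvalue criteria of Theorem \ref{main-thm-ab1} through the equivariant injection $q^*$ on $H^1(\cdot,\mathcal{O})$, where the paper instead invokes Proposition \ref{prop-5lem} (amplified case) and Lemma \ref{lem-pcd-des} (PCD case). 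Minor bookkeeping: Proposition \ref{prop-nz} already hands you the split lift $f_A\times f_S$ with $S$ weak Calabi--Yau (so $q(S)=0$), no extra power is needed to ``unpermute factors''; and your countability/density arguments for PCD should be run after passing to an uncountable field via Proposition \ref{prop-ext} (or replaced by the criterion ``$\Per$ dense and $\Fix(f^i)$ finite'' of Proposition \ref{prop-ext}(3)), as in Lemma \ref{lem-fin-lift-des}.

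The genuine gap is the clause $(\alb_X)_*\mathcal{O}_X=\mathcal{O}_{\Alb(X)}$, which you wave off as ``well known \dots\ via Stein factorisation and the normality of $\Alb(X)$''. Normality buys nothing here: Stein factorisation gives $X\to Z\to\Alb(X)$ with $Z$ normal and $Z\to\Alb(X)$ finite surjective, and the whole point is to exclude that this finite part has degree $>1$ (a priori $Z$ could be a ramified or disconnected-kernel-type cover; it is not even clear a priori that $Z$ is an abelian variety, since images and finite quotients of abelian varieties need not be). One must show $Z\to\Alb(X)$ is \'etale, so that $Z$ is an abelian variety and the universal property of the Albanese forces the finite part to be an isomorphism. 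This is exactly where the paper uses the dynamical input it has just established --- that $g$ is PCD in both cases --- together with the equivariant Stein factorisation \cite[Lemma 5.2]{CMZ} and Proposition \ref{prop-cov-ab}, whose proof pushes the ramification divisor $R_\pi$ around with the ramification formula and kills it using Theorem \ref{thm-pcd-cri2}. Your sketch supplies neither this argument nor a citation valid for klt $X$ with $K_X\sim_{\mathbb{Q}}0$ (Kawamata's theorem for smooth varieties of Kodaira dimension zero does not directly apply, since a resolution of a strictly klt $X$ with $K_X\sim_{\mathbb{Q}}0$ can have $\kappa=-\infty$), so as written that half of the statement is unproved; the rest of your argument is fine.
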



\section{Preliminaries}\label{sec-2}

Let $X$ be a projective variety of dimension $n$.
Throughout this paper, by a Cartier divisor, we always mean an integral Cartier divisor.
We refer to \cite[Definitions 2.1 and 2.2]{MZ} for the numerical equivalence ($\equiv$) of $\R$-Cartier divisors and the weak numerical equivalence ($\equiv_w$) of $r$-cycles.
Denote by $\N^1(X):=\NS(X) \otimes_{\Z} \mathbb{R}$ for the N\'eron-Severi group $\NS(X)$.
One can also regard $\N^1(X)$ as the quotient vector space of $\R$-Cartier divisors modulo the numerical equivalence.
Denote by $\N_r(X)$ the quotient vector space of $r$-cycles modulo the weak numerical equivalence.

\begin{definition}\label{def-cones} Let $X$ be a projective variety. We define:
\begin{itemize}
\item $\Amp(X)$, the cone of classes of ample $\mathbb{R}$-Cartier divisors in $\N^1(X)$.
\item $\Nef(X)$, the cone of classes of nef $\mathbb{R}$-Cartier divisors in $\N^1(X)$.
\item $\Bigc(X)$, the cone of classes of big $\mathbb{R}$-Cartier divisors in $\N^1(X)$.
\item $\PE^1(X)$, the closure of the cone of classes of effective $\mathbb{R}$-Cartier divisors in $\N^1(X)$.
\item $\NE(X)$, the closure of the cone of classes of effective $1$-cycles with $\R$-coefficients in $\N_1(X)$.
\end{itemize}
\end{definition}
Let $f:X\to X$ be a surjective endomorphism of a projective variety $X$ over an algebraically closed field $k$.
Thanks to the finiteness of $f$, all the above cones are $f^*$ and $f_*$ invariant by the projection formula.
Moreover, $f^*f_*=f_*f^*=(\deg f) \id$ on $\N^1(X)$ and $\N_r(X)$ for any $r$;
see \cite[Section 2.3]{Zh-comp}.

Denote by $$\Fix(f):=\{x\in X\,|\, f(x)=x\}$$ the set of fixed points of $f$.
Denote by $$\Per(f):=\bigcup_{i=1}^{+\infty} \Fix(f^i)$$ the set of periodic points of $f$.
Denote by $$\Prep(f):=\bigcup_{i=1}^{+\infty} f^{-i}(\Per(f))$$ the set of preperiodic points of $f$.

Let $K/k$ be a field extension such that $K$ is algebraically closed.
Denote by $X_K:=X\times_k K$ and $f_K:X_K\to X_K$ the induced surjective endomorphism.
The following definitions (4) and (5) coincide with the usual one (cf.~\cite{DS04} and \cite[\S 4]{DS17}) when $X$ is smooth and defined over $\mathbb{C}$.

\begin{definition}\label{def-pcd}
Let $f:X\to X$ be a surjective endomorphism of a projective variety $X$.
\begin{itemize}
\item[(1)] $f$ is {\it amplified} if $f^*D-D$ is an ample Cartier divisor for some Cartier divisor $D$.
\item[(2)] $f$ is {\it quasi-amplified} if $f^*D-D$ is a big Cartier divisor for some Cartier divisor $D$.
\item[(3)] $f$ is {\it PCD} if $\Per(f_K)$ is countable and Zariski dense in $X_K$ for some uncountable algebraically closed field extension $K/k$.
\item[(4)] $f$ is of {\it positive entropy} if the spectral radius of $f^*|_{\N^1(X)}$ is greater than $1$.
\item[(5)] $f$ is of {\it null entropy} if $f$ is not of positive entropy.
\end{itemize}
\end{definition}

The following result is frequently used throughout this paper.
\begin{proposition}\label{prop-ext}
Let $f:X\to X$ be a surjective endomorphism of a projective variety $X$ over $k$.
Then the following hold.
\begin{enumerate}
\item  $f$ is amplified (resp.~quasi-amplified) if and only if $f^*D-D$ is an ample (resp.~big) $\mathbb{R}$-Cartier divisor for some $\mathbb{R}$-Cartier divisor $D$.
\item  $f$ is amplified (resp.~quasi-amplified, of positive entropy) if and only if so is $f_K$ for any algebraically closed field extension $K/k$.
\item  $f$ is PCD if and only if $\Per(f)$ is Zariski dense in $X$ and $\Fix(f^i)$ is finite for any $i>0$. 
\item  $f$ is PCD if and only if $f_K$ is PCD for any algebraically closed field extension $K/k$.
\item  For any positive integer $n>0$, $f$ is amplified (resp.~quasi-amplified, PCD) if and only if so is $f^n$.
\end{enumerate}
\end{proposition}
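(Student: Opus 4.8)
The plan is to organize the five items around two observations: ampleness and bigness are numerically determined and cut out open cones in $\N^1(X)$, so rational perturbation and algebraically closed base change are harmless; and ``PCD'', though defined via an uncountable extension, is secretly the intrinsic condition appearing in (3), from which (4) and the PCD case of (5) follow formally.

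First I would dispatch (1): only the ``if'' direction has content. Given an $\mathbb{R}$-Cartier divisor $D=\sum_{i=1}^m a_iD_i$ with the $D_i$ integral Cartier and $f^*D-D$ ample (resp.\ big), I would look at the continuous linear map $\mathbb{R}^m\to\N^1(X)$, $t\mapsto [f^*(\sum t_iD_i)-\sum t_iD_i]$; since $\Amp(X)$ and $\Bigc(X)$ are open cones, the preimage of the relevant one is open and nonempty, hence meets $\mathbb{Q}^m$, and clearing denominators gives an integral Cartier $D'$ with $f^*D'-D'$ ample (resp.\ big). For the amplified and quasi-amplified cases of (5) I would use the telescoping identities $(f^n)^*D-D=\sum_{j=0}^{n-1}(f^j)^*(f^*D-D)$ and, in reverse, $f^*\big(\sum_{j=0}^{n-1}(f^j)^*E\big)-\sum_{j=0}^{n-1}(f^j)^*E=(f^n)^*E-E$; the only point to check is that $(f^j)^*$ preserves ampleness and bigness, which holds because $f^*$ is injective on $\N^1(X)$ (from $f^*f_*=(\deg f)\id$), hence a linear automorphism and so a homeomorphism of a finite-dimensional space, carrying $\Nef(X)$ and $\PE^1(X)$ into themselves and therefore their interiors $\Amp(X)$ and $\Bigc(X)$ into themselves.

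For (2) I would first record the standard fact that, for an algebraically closed extension $K/k$, the base-change map $\N^1(X)\to\N^1(X_K)$ is an isomorphism intertwining $f^*$ with $f_K^*$ and carrying $\Amp(X)$ onto $\Amp(X_K)$ and $\Bigc(X)$ onto $\Bigc(X_K)$ --- the first because in characteristic zero the N\'eron--Severi group scheme is the constant group scheme on a finitely generated abelian group, whose group of points is insensitive to algebraically closed base change, and the statements on the cones from the Nakai--Moishezon criterion together with $h^0(X_K,mD_K)=h^0(X,mD)$. Granting this, ``amplified'' is the assertion $\Amp(X)\cap(f^*-\id)(\N^1(X))\neq\emptyset$ and ``quasi-amplified'' the analogous one with $\Bigc(X)$ (using (1)), both visibly invariant under the isomorphism, while positive entropy only sees the spectral radius of $f^*|_{\N^1(X)}$, which is likewise unchanged; this gives (2) for every $K$.

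The heart of the matter is (3), and the hard part will be that ``countable'' and ``Zariski dense'' are each fragile under base change and become robust only once one knows the periodic-point schemes are finite, so the steps must be arranged to extract finiteness first. For $(\Rightarrow)$: suppose $f$ is PCD, witnessed by an uncountable algebraically closed $K/k$; since $\Fix(f_K^i)=\Fix(f^i)_K$, countability of $\Per(f_K)$ forces each $\Fix(f^i)_K$ to be $0$-dimensional (a positive-dimensional variety over an uncountable algebraically closed field has uncountably many points), hence $\Fix(f^i)$ is finite and consists of $k$-points, so $\Per(f_K)=\Per(f)\subseteq X(k)$ as sets, and $\Per(f)$ cannot lie in a proper closed $W\subsetneq X$ (else $\Per(f_K)\subseteq W_K\subsetneq X_K$), i.e.\ $\Per(f)$ is Zariski dense. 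For $(\Leftarrow)$: if all $\Fix(f^i)$ are finite then for any algebraically closed $K/k$ one has $\Per(f_K)=\Per(f)\subseteq X(k)$, a countable union of finite sets, hence countable; and a subset of $X(k)$ that is Zariski dense in $X$ stays Zariski dense in $X_K$ --- this is the one computation I would actually run, via $\Gamma(U_K,\mathcal{O}_{X_K})=\Gamma(U,\mathcal{O}_X)\otimes_kK$, since a regular function on $U_K$ vanishing at every $k$-point of $U$ has all its $k$-linearly-independent components vanishing there, hence vanishes. Finally, (4) and the PCD case of (5) are formal: the criterion in (3) involves only the base-change-insensitive dimensions of the $\Fix(f^i)$ and the Zariski density of $\Per(f)$, which transfers both ways once the $\Fix(f^i)$ are finite (the computation above in one direction, $W\mapsto W_K$ in the other, and both conditions fail simultaneously if some $\Fix(f^i)$ is infinite), so applying (3) over $k$ and over $K$ yields (4); for (5) one notes $\Per(f^n)=\Per(f)$ and that the $\Fix(f^{ni})$ are all finite if and only if the $\Fix(f^j)$ are.
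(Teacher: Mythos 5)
Your proposal is correct and follows essentially the same route as the paper: rational perturbation inside the open cones for (1), base-change invariance of $\N^1(X)$ and of the ample/big cones for (2), the identification $\Fix(f_K^i)=\Fix(f^i)\times_k K$ together with the uncountability argument and a density-transfer lemma (the paper's Lemma \ref{lem-ext}, which you reprove in affine form) for (3)--(4), and the telescoping identity $(f^n)^*D-D=\sum_{j=0}^{n-1}(f^j)^*(f^*D-D)$ for (5). The only cosmetic difference is that in (5) you justify preservation of ampleness/bigness under $(f^j)^*$ by the interior-of-invariant-cone argument rather than by finiteness of $f$, which is equally valid.
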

\begin{proof}
(1) One direction is clear. Suppose $f^*D-D$ is an ample $\mathbb{R}$-Cartier divisor for some $\mathbb{R}$-Cartier divisor $D$.
Note that $\Amp(X)$ is an open cone in $\N^1(X)$ and $(f^*-\id)|_{\N^1(X)}$ is continuous.
Then there exists some $\Q$-Cartier divisor $D'$ with $[D']$ in a sufficiently small neighbourhood of $[D]$ such that $[f^*D'-D']\in \Amp(X)$.
Assume that $mD'$ is Cartier for some $m>0$.
Then $f^*mD'-mD'$ is an ample Cartier divisor.
The quasi-amplified case is similar.

(2) Let $\pi: X_K\to X$ be the projection.
Note that $\Pic(X_K)=\Pic(X)\times_k K$ and $\Pic^0(X_K)=\Pic^0(X)\times_k K$.
Then $\pi^*:\NS(X)\to \NS(X_K)$ is an isomorphism and $\pi^*\circ f^*=f_K^*\circ\pi^*$.
Moreover, $\pi^*(\Amp(X))=\Amp(X_K)$ and $\pi^*(\Bigc(X))=\Bigc(X_K)$.
Then (2) is clear.

(3) Let $K/k$ be an algebraically closed field extension.
Let $\Delta_X$ be the diagonal of $X\times X$ and $\Gamma_f$ the graph of $f$.
We can identify $\Fix(f_K)$ with $\Delta_{X_K}\cap \Gamma_{f_K}$.
In particular, $\Fix(f_K)$ is defined over $k$ and hence $\Fix(f_K)=\Fix(f)\times_k K$.
Then $\Fix(f^i)$ is finite if and only if so is  $\Fix(f_K^i)$ for any $i>0$.
Note that $\overline{\Per(f_K)}=\overline{\bigcup_{i=1}^{+\infty} \Fix(f^i)\times_k K}\subseteq \overline{\Per(f)}\times_k K\subseteq \overline{\Per(f_K)}$ where the last inclusion is by Lemma \ref{lem-ext}.
So we have $\overline{\Per(f_K)}=\overline{\Per(f)}\times_k K$.
Now one direction is clear.

Suppose $\Per(f_K)$ is countable and Zariski dense in $X_K$ for some uncountable field extension $K/k$.
Then $\Per(f)$ is Zariski dense in $X$.
We claim that $\Fix(f_K^i)$ is finite for any $i>0$.
Otherwise, $\Fix(f_K^i)$ is infinite for some $i>0$.
Let $Z$ be the closure of  $\Fix(f_K^i)$ in $X_K$.
Then $f_K^i|_Z=\id_Z$ and hence $Z\subseteq \Fix(f_K^i)\subseteq \Per(f_K)$.
However, $K$ being uncountable and $\dim(Z)>0$ imply that $Z$ is uncountable, a contradiction.
Therefore, $\Fix(f^i)$ is finite for each $i>0$.
So (3) is proved.

(4) A similar argument to (3) works.

(5) Let $\varphi:=f^*|_{\N^1(X)}$. Note that $\varphi^n-\id=(\varphi-\id)\circ \sum_{i=0}^{n-1}\varphi^i=\sum_{i=0}^{n-1}\varphi^i\circ (\varphi-\id)$.
Suppose $H:=\varphi(D)-D$ is ample (resp.~big). 
Then $\varphi^n(D)-D=\sum_{i=0}^{n-1}\varphi^i(H)$ is ample (resp.~big).
Conversely, suppose $\varphi^n(D)-D$ is ample (resp.~big).
Then $\varphi(D')-D'=\varphi^n(D)-D$ is ample (resp.~big) where $D':=\sum_{i=0}^{n-1}\varphi^i(D)$.
Finally, note that $\Per(f^n)=\Per(f)$ always holds true.
So (5) is proved.
\end{proof}

\begin{lemma}\label{lem-ext}
Let $K/k$ be algebraically closed fields.
Let $S$ be a subset of $\mathbb{P}_k^n$ and regard $\mathbb{P}_k^n$ as a subset of $\mathbb{P}_K^n$.
Denote by $\overline{S}^k$ the closure of $S$ in $\mathbb{P}_k^n$ and $\overline{S}^K$ the closure of $S$ in $\mathbb{P}_K^n$.
Let $f\in K[x_0,\cdots, x_n]$ be a homogeneous polynomial such that $f|_S=0$.
Then $f|_{\overline{S}^k}=0$.
In particular, $\overline{S}^k\subseteq \overline{S}^K$.
\end{lemma}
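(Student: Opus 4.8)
The plan is to descend the coefficients of $f$ to $k$ and reduce to a polynomial defined over $k$. First I would let $V_0\subseteq K$ be the finite-dimensional $k$-subspace spanned by the finitely many coefficients of $f$, fix a $k$-basis $e_1,\cdots,e_m$ of $V_0$, and expand each coefficient of $f$ in this basis; collecting terms yields a decomposition $f=\sum_{j=1}^m e_j f_j$ in which each $f_j\in k[x_0,\cdots,x_n]$ is homogeneous of the same degree as $f$. The useful feature of this expansion is that for a point $p=[a_0:\cdots:a_n]$ of $\mathbb{P}_k^n$ with all $a_i\in k$ one has $f_j(a_0,\cdots,a_n)\in k$, whereas $f(a_0,\cdots,a_n)=\sum_{j=1}^m e_j f_j(a_0,\cdots,a_n)$; homogeneity of $f$ and of the $f_j$ makes the vanishing of each of these polynomials at $p$ independent of the chosen homogeneous coordinates.

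Next I would apply $k$-linear independence of $e_1,\cdots,e_m$ pointwise. If $p\in S$, then choosing homogeneous coordinates $a_i\in k$, the hypothesis $f(p)=0$ reads $\sum_{j=1}^m e_j f_j(p)=0$ with every $f_j(p)\in k$, which forces $f_j(p)=0$ for all $j$; hence each $f_j$ vanishes on $S$. Since $f_j$ is a homogeneous polynomial with coefficients in $k$, its zero locus $V(f_j)$ is a Zariski closed subset of $\mathbb{P}_k^n$ containing $S$, hence containing $\overline{S}^k$, so that $f_j|_{\overline{S}^k}=0$ for every $j$. Finally, any point $q\in\overline{S}^k$ has homogeneous coordinates in $k$, so $f(q)=\sum_{j=1}^m e_j f_j(q)=0$; this proves $f|_{\overline{S}^k}=0$.

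For the last assertion I would use that, set-theoretically, $\overline{S}^K$ is the common zero locus in $\mathbb{P}_K^n$ of all homogeneous polynomials $g\in K[x_0,\cdots,x_n]$ vanishing on $S$. Applying the statement just proved to each such $g$ gives $g|_{\overline{S}^k}=0$, i.e. $\overline{S}^k\subseteq V(g)$; as each point of $\overline{S}^k$ is a $k$-rational, hence $K$-rational, point of $\mathbb{P}_K^n$, intersecting over all such $g$ yields $\overline{S}^k\subseteq\overline{S}^K$.

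I do not expect a serious obstacle. The only points needing care are the well-definedness of evaluating a homogeneous polynomial at a point of projective space (handled by homogeneity of the $f_j$) and the compatibility of the two ambient spaces $\mathbb{P}_k^n\subseteq\mathbb{P}_K^n$ (handled by the fact that the $f_j$ have coefficients in $k$, so their zero loci can already be formed inside $\mathbb{P}_k^n$). Conceptually the lemma is an instance of faithfully flat descent along $k\subseteq K$ — the same phenomenon underlying the stability of the various cohomological invariants under field extension used in Proposition \ref{prop-ext}(2) — but the explicit basis argument keeps it elementary and self-contained.
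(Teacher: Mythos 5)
Your proposal is correct and is essentially the paper's own argument: expanding the coefficients of $f$ in a $k$-basis of their $k$-span to write $f=\sum_j e_j f_j$ with $f_j\in k[x_0,\cdots,x_n]$ homogeneous and $e_j$ $k$-linearly independent is exactly the paper's decomposition $f=\sum_i a_i f_i$, and the pointwise linear-independence argument plus passing to $\overline{S}^k$ is identical. Your additional details (well-definedness of evaluation on projective space, and the deduction of $\overline{S}^k\subseteq\overline{S}^K$ via the defining equations of $\overline{S}^K$) only make explicit what the paper leaves implicit.
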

\begin{proof}
We may write $f:=\sum_{i=1}^m a_if_i$ such that $f_i$ are homogeneous polynomials of the same degree with coefficients in $k$ and $a_i$ are $k$-linearly independent.
For any $s\in S$, $f(s)=0$ and hence $f_i(s)=0$ for all $i$.
Then $f_i|_{\overline{S}^k}=0$ for all $i$.
Therefore, $f|_{\overline{S}^k}=0$.
\end{proof}

Now we may rewrite Fakhruddin's Theorem \ref{thm-fak} in the following way.
Indeed, by Proposition \ref{prop-ext}, we may work over an uncountable field, then uncountably many periodic points will produce a positive dimensional pointwise fixed subvariety (after iteration), however the restriction of the amplified endomorphism on this subvariety is then both identity and amplified, a contradiction.
Such statement is already in the proof of Fakhruddin's theorem, though not explicitly stated.
\begin{theorem}[Fakhruddin]\label{thm-amp-pcd}
Let $f:X\to X$ be an amplified endomorphism of a projective variety $X$.
Then $f$ is PCD.
\end{theorem}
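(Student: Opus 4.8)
The plan is to combine the geometric conclusion of Fakhruddin's theorem (Theorem \ref{thm-fak}) — that $\Per(f)$ is Zariski dense — with the cohomological obstruction coming from amplifiedness, and then invoke the characterization of PCD given in Proposition \ref{prop-ext}(3). By that characterization, it suffices to prove two things: (i) $\Per(f)$ is Zariski dense in $X$, and (ii) $\Fix(f^i)$ is finite for every $i>0$. Statement (i) is exactly Theorem \ref{thm-fak}, so no further work is needed there. The content of the argument is therefore statement (ii).

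For (ii), first I would reduce to $i=1$: if $f$ is amplified then so is $f^i$ by Proposition \ref{prop-ext}(5) (or directly, since $(f^i)^*D - D = \sum_{j=0}^{i-1}(f^*)^j(f^*D-D)$ is a sum of ample divisors, hence ample, when $f^*D-D$ is ample), so it is enough to show $\Fix(f)$ is finite for an arbitrary amplified $f$. Suppose for contradiction that $\Fix(f)$ is infinite, and let $Z\subseteq X$ be an irreducible positive-dimensional component of its Zariski closure. Then $f|_Z = \id_Z$. Fix a Cartier divisor $D$ on $X$ with $H := f^*D - D$ ample. Restricting to $Z$ and using $f|_Z=\id_Z$, we get $H|_Z = (f^*D)|_Z - D|_Z = (f|_Z)^*(D|_Z) - D|_Z = D|_Z - D|_Z = 0$ in $\operatorname{Pic}(Z)$, hence certainly $H|_Z \equiv 0$ in $\operatorname{N}^1(Z)$. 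But $H$ is ample on $X$, so $H|_Z$ is ample on the positive-dimensional projective variety $Z$, and an ample class on a positive-dimensional projective variety is nonzero (e.g. $(H|_Z)^{\dim Z}>0$). This contradiction forces $\dim Z = 0$, i.e. $\Fix(f)$ is finite. Applying this to each $f^i$ gives (ii), and together with (i) and Proposition \ref{prop-ext}(3) we conclude that $f$ is PCD.

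I do not anticipate a serious obstacle here: the only mild subtlety is the bookkeeping needed to pass from $\Fix(f)$ to $\Fix(f^i)$, which is handled cleanly by the iteration statement Proposition \ref{prop-ext}(5), and the observation that amplifiedness of $f$ can be tested by an $\mathbb{R}$-Cartier divisor (Proposition \ref{prop-ext}(1)) in case one prefers to work with $\mathbb{R}$-coefficients. One should also note that the argument is insensitive to base field extension: by Proposition \ref{prop-ext}(2) and (4) it is harmless to assume $k$ uncountable at the outset, but in fact the proof above never uses uncountability — the finiteness of $\Fix(f^i)$ is a purely geometric consequence of amplifiedness, and Zariski density of $\Per(f)$ over $k$ passes to any algebraically closed extension by Lemma \ref{lem-ext}. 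Hence the PCD condition in Definition \ref{def-pcd}(3) is met for every uncountable algebraically closed $K/k$, not just for one.
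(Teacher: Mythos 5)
Your proof is correct and takes essentially the same route as the paper: Zariski density of $\Per(f)$ comes from Theorem \ref{thm-fak}, finiteness of each $\Fix(f^i)$ is the extra ingredient, and the two are combined via the characterization in Proposition \ref{prop-ext}(3). The only difference is that the paper quotes \cite[Lemma 2.4]{Meng} for the finiteness of the fixed-point sets, while you prove it directly by restricting the ample divisor $f^*D-D$ to a positive-dimensional component of $\Fix(f)$ (where $f$ acts as the identity), which is the same standard argument written out.
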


Let $f:X\dashrightarrow X$ be a dominant rational self-map of a projective variety $X$.
We say $f$ has a {\it Zariski dense orbit}, if for some $x\in X$, the orbit $\{f^n(x)\,|\,n\ge 0\}$ is Zariski dense in $X$.
We recall the following useful result proved by Amerik and Campana. Here, we rewrite it a bit and only consider surjective endomorphisms for convenience.
Note that it still remains unknown without the ``uncountable'' assumption; see \cite[Conjecture 7.14]{MS}.
\begin{theorem}(cf.~\cite{AC})\label{thm-AC}
Let $f:X\dashrightarrow X$ be a dominant rational self-map of a projective variety $X$ over an uncountable algebraically closed field $k$.
Then $f$ has no Zariski dense orbit if and only if there is a dominant rational map $\pi:X\dasharrow \mathbb{P}^1$ such that $\pi\circ f=f$.
\end{theorem}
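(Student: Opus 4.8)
The plan is to recast the statement via the fixed field of the embedding $\sigma:=f^\ast\colon k(X)\hookrightarrow k(X)$ induced by the dominant map $f$. A dominant rational map $\pi\colon X\dasharrow\mathbb{P}^1$ with $\pi\circ f=\pi$ is exactly a non‑constant element of $k(X)^\sigma:=\{a\in k(X):\sigma(a)=a\}$, so I must show that $f$ has no Zariski dense orbit if and only if $k(X)^\sigma\neq k$. I may assume $\dim X\geq1$. If $f$ has finite order $\ell$, then $f$ is an automorphism and the quotient $X/\langle f\rangle$ is a variety of dimension $\dim X\geq1$ with function field $k(X)^\sigma$, so $k(X)^\sigma\neq k$ while every orbit is finite; hence both sides hold and I may otherwise assume $f$ has infinite order, in which case every $\Fix(f^\ell)$ is a proper closed subset and, since $f$ is surjective, $\Per(f)=\bigcup_\ell\Fix(f^\ell)$ and $\Prep(f)=\bigcup_i f^{-i}(\Per(f))$ are countable unions of proper closed subsets.

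For the implication ``invariant $\pi$ $\Rightarrow$ no dense orbit'' I would resolve $\pi$ in an $f$-equivariant way: blowing up the integral closure of the indeterminacy ideal of $\pi$, which is $f^{-1}$-stable precisely because $\pi\circ f=\pi$, produces a projective birational morphism $\rho\colon X'\to X$ admitting a lift $f'\colon X'\to X'$ of $f$ and with $\pi\circ\rho$ extending to a morphism $\pi'\colon X'\to\mathbb{P}^1$; the identity $\pi'\circ f'=\pi'$ then holds on a dense open set, hence everywhere. Now every $f'$-orbit lies in a single fibre of $\pi'$, a proper closed subset, and since $\rho$ is surjective and carries $f'$-orbits onto $f$-orbits, no $f$-orbit is dense.

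The substance is the converse, which I would prove in contrapositive form: assuming $k(X)^\sigma=k$, produce a dense orbit. Let $\mathcal{V}$ be the set of $f$-periodic irreducible closed subvarieties $V\subsetneq X$ (so $f^\ell(V)=V$ for some $\ell\geq1$) and set $\Sigma:=\bigcup_{V\in\mathcal{V}}V$. The first, soft, step is to confine the set of points without a dense orbit inside $\bigcup_{i\geq0}f^{-i}(\overline{\Sigma})$: a preperiodic point lies in $\bigcup_i f^{-i}(\Sigma)$ because points are $0$-dimensional members of $\mathcal{V}$; and if $x$ is not preperiodic but $Z_x:=\overline{\{f^n(x):n\geq0\}}\neq X$, then $Y_x:=\bigcap_{n}Z_{f^n(x)}$ stabilises (by Noetherianity) to $Z_{f^{n_0}(x)}$ for $n_0\gg0$, which is $f$-invariant with $f|_{Y_x}$ surjective, so $f$ permutes the finitely many irreducible components of $Y_x$, each of which is then $f^{\ell}$-periodic and of dimension $\leq\dim Z_x<\dim X$, hence in $\mathcal{V}$; the component through $f^{n_0}(x)$ shows $x\in f^{-n_0}(\Sigma)$. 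The second step is to rule out $\overline{\Sigma}=X$: grouping $\mathcal{V}$ by dimension (finitely many values) and by Chow class (countably many) and using that the uncountable field $k$ forbids $X$ from being a countable union of proper closed subsets, $\overline{\Sigma}=X$ would force a \emph{bounded} closed family $T$ of proper subvarieties — a closed subvariety of a single component of the Chow variety of $X$ — whose members sweep out $X$ and a Zariski dense subset of which are $f$-periodic; since finitely many proper subvarieties cannot cover $X$, $\dim T\geq1$, and from the universal family over $T$, whose evaluation to $X$ is surjective, I would extract — after shrinking, normalising, and accounting for the action $V\mapsto f(V)$ on $T$ (which permutes the periodic members in finite orbits, up to the $f_\ast$-action on Chow classes) — a dominant rational map $g\colon X\dasharrow T'$ with $\dim T'\geq1$, $g\circ f=g$, whose general fibre is a member of $T$; composing with a non-constant rational function on $T'$ gives a non-constant element of $k(X)^\sigma$, a contradiction. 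Hence $\overline{\Sigma}\subsetneq X$, so the set of points without a dense orbit lies in a countable union of proper closed subvarieties, and since $k$ is uncountable and $X$ is irreducible of positive dimension, some point must have a dense orbit.

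The main obstacle is the extraction step above: proving that the general member of the covering family $T$ is an irreducible subvariety of the expected dimension, that these members — which may overlap, but only in smaller dimension — are genuinely the fibres of a rational fibration of $X$, and that the $f$-action on $T$, which is only a finite-order permutation of the periodic members and only well-defined up to the $f_\ast$-action on Chow classes, can be normalised so that the resulting map $g$ is strictly $f$-invariant rather than merely $f$-equivariant over a non-trivial base. This Chow-variety and dimension-counting bookkeeping over the uncountable ground field is where the real work of \cite{AC} lies, and is the part I expect to be hardest; the equivariant resolution of $\pi$ and the confinement of the ``bad set'' are comparatively routine.
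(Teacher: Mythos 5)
The paper offers no proof of this statement---it is quoted from \cite{AC} (and the displayed ``$\pi\circ f=f$'' is a typo for ``$\pi\circ f=\pi$'', which you read correctly)---so your sketch must be measured against the Amerik--Campana argument it would replace, and there it has a genuine gap: the intermediate claim your whole plan reduces to is false. You assume $k(X)^\sigma=k$ and try to deduce $\overline{\Sigma}\subsetneq X$, where $\Sigma$ is the union of \emph{all} proper $f$-periodic irreducible subvarieties. But $\Sigma$ contains $\Per(f)$, and dense periodic points are perfectly compatible with the absence of an invariant fibration: take $A$ a simple abelian surface over uncountable $k$ and $f=[2]$. Then $\Per(f)$ is Zariski dense (Lemma \ref{lem-iso-pd}), so $\overline{\Sigma}=A$; yet there is no dominant $\pi:A\dasharrow\mathbb{P}^1$ with $\pi\circ f=\pi$, since the positive-dimensional components of the closure of a general fibre would be $f$-periodic, hence by Theorem \ref{thm-pcd-PR} (cosets of) nonzero proper abelian subvarieties, which a simple abelian surface does not have. (More generally, any amplified endomorphism has dense periodic points by Theorem \ref{thm-fak} and, over uncountable $k$, a dense orbit by Theorem \ref{thm-amp-orbit}, so your dichotomy fails systematically.) Your own sketch shows where the extraction collapses in such examples: when the bounded family $T$ consists of points, the would-be map ``$x\mapsto$ the member of $T$ through $x$'' is generically finite, and $g\circ f=g$ would force $f=\id$ on a dense set. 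Even for positive-dimensional members, a covering family need not be the fibre family of any rational map (lines in $\mathbb{P}^2$), and making the induced map strictly $f$-invariant is not a ``normalisation'' issue. Producing such a quotient is exactly the content of \cite{AC}: they work not with arbitrary periodic subvarieties but with orbit closures of very general points, parametrize them by the Chow/cycle space over the uncountable field, and build a meromorphic quotient whose general fibre is the orbit closure of a general point \emph{of that fibre}. That construction, which you explicitly defer as the hardest part, is the theorem; without it the hard direction is unproved, and with your choice of $\Sigma$ it cannot be repaired as stated.

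Two smaller points. In the easy direction, your equivariant resolution rests on the claim that the indeterminacy ideal of $\pi$ is $f^{-1}$-stable because $\pi\circ f=\pi$; this is unjustified, and in fact the natural inclusion goes the other way: since $\pi\circ f$ is a morphism on $f^{-1}(U)$, $U:=\mathrm{dom}(\pi)$, and agrees with $\pi$ on a dense open set, one gets $f^{-1}(U)\subseteq U$, i.e.\ the indeterminacy locus is only forward-invariant, which is not what a lift of $f$ to the blow-up requires. The direction is easily saved without resolving: if the orbit of $x$ were dense, then either every orbit point lies in $U$, in which case $f^{-1}(U)\subseteq U$ and $\pi\circ f=\pi$ on $U\cap f^{-1}(U)$ force the whole orbit into the closure of a single fibre of $\pi|_U$, or all but finitely many orbit points lie in the proper closed set $X\setminus U$; either way the orbit is not dense. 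Your confinement of the non-dense-orbit locus inside $\bigcup_{i\ge 0}f^{-i}(\Sigma)$, and the finite-order reduction, are correct---but they are the routine part.
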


The following is an application of the above result, which is originally motivated by Zhang \cite[Conjecture 4.1.6]{Zhsw} for polarized endomorphisms.
\begin{theorem}\label{thm-amp-orbit}
Let $f:X\to X$ be an amplified endomorphism of a projective variety $X$ over an uncountable algebraically closed field $k$.
Then $f$ has a Zariski dense orbit.
\end{theorem}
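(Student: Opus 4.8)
The plan is to argue by contradiction. Using Theorem \ref{thm-AC} I would turn the failure of "Zariski dense orbit" into an $f$-invariant fibration over $\mathbb{P}^1$, and then exploit that every general fibre carries an amplified endomorphism — hence, by Theorem \ref{thm-fak}, a periodic point — to produce uncountably many periodic points of $f$, contradicting the countability of $\Per(f)$ that Theorem \ref{thm-amp-pcd} forces on an amplified endomorphism.

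Concretely: suppose $f$ has no Zariski dense orbit. By Theorem \ref{thm-AC} there is a dominant rational map $\pi:X\dashrightarrow\mathbb{P}^1$ with $\pi\circ f=\pi$. I would take $\pi$ to be given by a pencil with no fixed component, so that its indeterminacy locus $B$ has codimension $\geq 2$; put $U:=X\setminus B$, so $\pi|_U:U\to\mathbb{P}^1$ is a morphism. Recall that $f$, being a surjective endomorphism of a projective variety, is finite, so $f^{-1}(B)$ also has codimension $\geq 2$; and fix, by Proposition \ref{prop-ext}(1), an $\mathbb{R}$-Cartier divisor $D$ on $X$ with $H:=f^*D-D$ ample. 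Now the fussy step: by generic smoothness and constructibility I would choose a dense open $V\subseteq\mathbb{P}^1$ such that for every $t\in V$ the fibre $\pi^{-1}(t)\subseteq U$ is smooth, none of its (finitely many, and constant in number over $V$) irreducible components lies in $\Supp D$, and $\pi^{-1}(t)\cap f^{-1}(B)$ is a proper closed subset of $\pi^{-1}(t)$; then, writing $D_t:=\overline{\pi^{-1}(t)}\subseteq X$, the identity $\pi\circ f=\pi$ on $U\cap f^{-1}(U)$ shows that $f$ carries a dense subset of $\pi^{-1}(t)$ into $\pi^{-1}(t)$, hence $f(D_t)\subseteq D_t$. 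Finally, after replacing $f$ by a suitable positive power — harmless by Proposition \ref{prop-ext}(5), and keeping $\pi\circ f=\pi$ and "no Zariski dense orbit" — I may assume $f$ maps each irreducible component $F$ of each $D_t$, $t\in V$, onto itself (the number of these components and the way $f$ permutes them being locally constant over a dense open of $\mathbb{P}^1$).

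The heart of the argument is then short. Fix $t\in V$ and an irreducible component $F$ of $D_t$: it is a projective variety, $f|_F:F\to F$ is a surjective endomorphism, and $(f|_F)^*(D|_F)-D|_F=(f^*D-D)|_F=H|_F$ is ample, so $f|_F$ is amplified. If $\dim F\geq 1$ then $\Per(f|_F)$ is Zariski dense in $F$ by Theorem \ref{thm-fak}, and since $F\setminus U$ is a proper closed subset of $F$, some point of $\Per(f|_F)$ lies in $F\cap U\subseteq\pi^{-1}(t)$; if $\dim F=0$ (which forces $\dim X=1$ and $B=\emptyset$), then $F$ is a single $f$-fixed point, and it lies in $\pi^{-1}(t)$. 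As $f(F)\subseteq F$, such a point is periodic for $f$, so $\Per(f)\cap\pi^{-1}(t)\neq\emptyset$ for every $t\in V$. On the other hand $f$ is amplified, hence PCD (Theorem \ref{thm-amp-pcd}), so $\Fix(f^i)$ is finite for all $i\geq 1$ (Proposition \ref{prop-ext}(3)) and therefore $\Per(f)=\bigcup_{i\geq 1}\Fix(f^i)$ is countable. But the fibres $\pi^{-1}(t)$, $t\in V(k)$, are pairwise disjoint and $V(k)$ is uncountable because $k$ is; selecting one periodic point in each $\pi^{-1}(t)$ produces uncountably many distinct points of $\Per(f)$ — a contradiction. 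Hence $f$ has a Zariski dense orbit.

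The one place real care is needed is the second paragraph: arranging a good open set $V$, and checking that the closure of a general fibre is $f$-stable despite $\pi$ being only a rational map (this is exactly where the codimension-$\geq 2$ indeterminacy is used) and that restriction to such a fibre preserves amplifiedness. None of this is deep, and the conceptual step — invariant fibration $\Rightarrow$ uncountably many periodic points $\Rightarrow$ contradiction with Theorems \ref{thm-fak} and \ref{thm-amp-pcd} — is immediate.
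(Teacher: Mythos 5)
Your proposal is correct and follows essentially the same route as the paper's proof: assuming no dense orbit, Theorem \ref{thm-AC} yields a fibration $\pi$ over $\mathbb{P}^1$ with $\pi\circ f=\pi$, amplifiedness restricts to the invariant general fibres, Theorem \ref{thm-fak} (via Theorem \ref{thm-amp-pcd}) supplies a periodic point in each of uncountably many disjoint fibres, contradicting the countability of $\Per(f)$. The extra bookkeeping you carry out (invariance of fibre closures, irreducible components, passing to a power) just fills in details the paper's shorter argument leaves implicit.
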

\begin{proof}
Suppose $f$ has no Zariski dense orbit.
By Theorem \ref{thm-AC}, there is a dominant rational map $\pi:X\dasharrow \mathbb{P}^1$ such that $\pi\circ f=\pi$.
Let $U$ be an open dense subset of $X$ such that $\pi$ is well defined over $U$.
Denote by $X_y:=\overline{\pi|_U^{-1}(y)}$ for any $y\in \pi(U)$.
Then $f^{-1}(X_y)=X_y$.
Note that $f|_{X_y}$ is amplified and $\dim(X_y)>0$.
By Theorem \ref{thm-amp-pcd}, $\Per(f|_{X_y})\cap U\neq \emptyset$.
Note that $\Per(f)\supseteq \bigcup_{y\in \pi(U)} (\Per(f|_{X_y})\cap U)$ and the latter one is an uncountable disjoint union.
So $\Per(f)$ is uncountable, a contradiction.
\end{proof}

One can see easily that if Theorem \ref{thm-AC} holds true without the ``uncountable'' assumption, then so does Theorem \ref{thm-amp-orbit}.
Indeed, a positive answer to the following question is enough to show that  Theorem \ref{thm-amp-orbit} (in particular \cite[Conjecture 4.1.6]{Zhsw})  holds true without the ``uncountable'' assumption.
\begin{question}
Let $f:X\to X$ be a surjective endomorphism of a projective variety $X$ over a countable algebraically closed field $k$.
Suppose $f_K$ has a Zariski dense orbit for some algebraically closed field extension $K/k$.
Will $f$ also admit a Zariski dense orbit?
\end{question}

\section{General results on surjective endomorphisms}

\begin{proposition}\label{prop-a-cri}
Let $f:X\to X$ be a surjective endomorphism of a projective variety $X$.
Then the following are equivalent.
\begin{itemize}
\item[(1)] $f$ is amplified.
\item[(2)] For any $Z\in \NE(X)$, $f_*Z\equiv_w Z$ implies $Z\equiv_w 0$.
\end{itemize}
\end{proposition}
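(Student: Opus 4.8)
The plan is to prove the two implications separately, in each case passing through an equivalent formulation in terms of the dual cone of nef divisors. Recall the basic pairing: $Z \equiv_w 0$ for $Z \in \N_1(X)$ iff $H \cdot Z = 0$ for every ample (equivalently, every nef) $\R$-Cartier divisor $H$, and more generally $Z \in \NE(X)$ is detected by its intersection with $\Nef(X)$. The operators $f^*$ on $\N^1(X)$ and $f_*$ on $\N_1(X)$ are adjoint under the intersection pairing: $f^*D \cdot Z = D \cdot f_*Z$. Both $\N^1(X)$ and $\N_1(X)$ are finite-dimensional, so $f^*$ has finitely many eigenvalues and we may freely use linear algebra over $\R$ (or its complexification).

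\emph{$(1)\Rightarrow(2)$.} Suppose $f$ is amplified, so by Proposition~\ref{prop-ext}(1) there is an $\R$-Cartier divisor $D$ with $H := f^*D - D$ ample. Let $Z \in \NE(X)$ with $f_*Z \equiv_w Z$. Then
$$H \cdot Z = (f^*D - D)\cdot Z = D\cdot f_*Z - D\cdot Z = D\cdot Z - D\cdot Z = 0,$$
using adjointness and $f_*Z \equiv_w Z$. Since $H$ is ample and $Z$ is a limit of effective $1$-cycles, $H\cdot Z \ge 0$ with equality iff $Z \equiv_w 0$; hence $Z \equiv_w 0$. This direction is routine.

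\emph{$(2)\Rightarrow(1)$.} This is the substantive direction and the main obstacle. The strategy is contrapositive combined with a compactness/averaging argument producing an invariant cycle. Assume $f$ is not amplified; I want to produce a nonzero $Z \in \NE(X)$ with $f_*Z \equiv_w Z$. The key point is that "not amplified" means $(f^* - \id)(\N^1(X))$ does not meet the open cone $\Amp(X)$: indeed if $f^*D - D$ were ample for some $\R$-divisor $D$ then $f$ is amplified by Proposition~\ref{prop-ext}(1), so non-amplifiedness gives that the image subspace $V := (f^*-\id)(\N^1(X))$ is disjoint from $\Amp(X)$. Since $\Amp(X)$ is open, convex and nonempty, the Hahn–Banach / hyperplane separation theorem yields a nonzero linear functional $\ell$ on $\N^1(X)$ vanishing on $V$ and nonnegative on $\Amp(X)$; identifying $\N^1(X)^\vee$ with $\N_1(X)$ via the pairing, $\ell$ is represented by a nonzero class $Z$ with $H\cdot Z \ge 0$ for all ample $H$ — hence $Z \in \NE(X)$ by the duality between $\NE(X)$ and $\Nef(X)$ — and $(f^*D - D)\cdot Z = 0$ for all $D$, i.e.\ $D\cdot(f_*Z - Z) = 0$ for all $D$, which by nondegeneracy of the pairing means $f_*Z \equiv_w Z$. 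This $Z$ is the desired witness against (2).

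The delicate point to get right is the identification of $\N_1(X)$ with the full dual of $\N^1(X)$: the intersection pairing $\N^1(X)\times\N_1(X)\to\R$ need not be perfect in general on a singular variety, so one should either argue within the quotient of $\N_1(X)$ by the kernel of the pairing (which is exactly where $\equiv_w$ lives, so this is harmless), or note that $\NE(X)$ spans that quotient and that $\Nef(X)$ is its dual cone — then the separation argument takes place in the honest dual pair $(\N^1(X)/{\ker}, (\N_1(X)/{\ker}))$ and everything goes through. Once that bookkeeping is settled, the separation argument is clean and no iteration or further geometry is needed; in particular this Proposition is purely a fact about the linear action of $f$ on the Néron–Severi and $1$-cycle spaces together with the ample/nef cone duality.
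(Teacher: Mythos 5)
Your proof is correct and follows essentially the same route as the paper: the forward direction via the adjunction $(f^*D-D)\cdot Z=D\cdot(f_*Z-Z)$ together with Kleiman positivity of ample classes on $\NE(X)\setminus\{0\}$, and the converse by separating the subspace $V=\operatorname{im}(f^*|_{\N^1(X)}-\id)$ from $\Amp(X)$ to produce a nonzero pseudo-effective class $Z$ with $f_*Z\equiv_w Z$. The only cosmetic differences are your (unused) mention of an ``averaging'' argument and the remark about perfectness of the pairing, which is automatic here since $\N_1(X)$ is by definition $1$-cycles modulo the kernel of the pairing with $\N^1(X)$.
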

\begin{proof}
Suppose $f^*D-D$ is ample.
For any pseudo-effective $1$-cycle $Z\not\equiv_w 0$, we have $(f^*D-D)\cdot Z=D\cdot (f_*Z-Z)>0$ and hence $f_*Z\not\equiv_w Z$.
So (1) implies (2).
Suppose $f$ is not amplified.
Let $V$ be the image of $f^*|_{\N^1(X)}-\id$.
Then $V\cap \Amp(X)=\emptyset$ and hence there exists some $1$-cycle $Z\not\equiv_w 0$ such that $L\cdot Z=0$ for any $L\in V$ and $A\cdot Z>0$ for any $A\in \Amp(X)$.
If $Z\not\in \NE(X)$, then there exists some Cartier divisor $B$ such that $B\cdot Z\le 0$ and $B\cdot C>0$ for any $C\in \NE(X)$.
By Kleiman's ampleness criterion (cf.~\cite[Theorem 1.18]{KM}, $B\in \Amp(X)$.
So we get a contradiction and hence $Z\in \NE(X)$.
Note that $D\cdot (f_*Z-Z)=(f^*D-D)\cdot Z=0$ for any Cartier divisor $D$.
Therefore, $f_*Z\equiv_w Z$.
\end{proof}


\begin{lemma}\label{lem-np-aut}
Let $f:X\to X$ be a surjective endomorphism of null entropy of a projective variety $X$.
Then all the eigenvalues of $f^*|_{\N^1(X)}$ are roots of unity and $f$ is an automorphism.
\end{lemma}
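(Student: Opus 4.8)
The plan is to prove the two assertions in sequence, deriving the second from the first together with a spectral-radius (equivalently, finite-orbit) argument on a suitable cone. Let $\varphi := f^*|_{\N^1(X)}$. By hypothesis the spectral radius of $\varphi$ is $1$, so every eigenvalue of $\varphi$ has modulus $\le 1$; I must upgrade this to ``every eigenvalue is a root of unity''. The key structural input is that $\varphi$ preserves the closed cone $\Nef(X)$ (equivalently, $\varphi_* = (f_*)|_{\N^1(X)}$ preserves $\NE(X)$, and these are dual), as recorded in Section \ref{sec-2}, and that $\varphi$ is invertible with inverse $\tfrac{1}{\deg f}\,\varphi_*$ by the projection formula $f^*f_* = (\deg f)\id$. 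First I would invoke the standard fact (a Perron--Frobenius-type statement for linear maps preserving a full-dimensional closed cone, see e.g.\ \cite{Bir}) that such a $\varphi$ has its spectral radius as an eigenvalue with an eigenvector in $\Nef(X)$; more to the point, because $\varphi$ is invertible and both $\varphi$ and $\varphi^{-1}$ preserve a salient generating cone, the spectral radius of $\varphi^{-1}$ is also the spectral radius of $\varphi$, which combined with $\rho(\varphi)=1$ forces $\rho(\varphi^{-1})=1$ as well, so \emph{all} eigenvalues of $\varphi$ lie on the unit circle.

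Next I would promote ``all eigenvalues on the unit circle'' to ``all eigenvalues roots of unity''. Here is where integrality enters: $\varphi$ is induced by $f^*$ on $\NS(X)$, which is a finitely generated abelian group, so $\varphi$ is (conjugate to) a matrix with integer entries acting on $\NS(X)/\torsion$; its characteristic polynomial is monic with integer coefficients, and all of its roots have modulus $1$. By Kronecker's theorem, every such root is a root of unity. (One must also rule out non-trivial Jordan blocks for eigenvalues of modulus $1$ that would make $\|\varphi^n\|$ grow polynomially while the spectral radius stays $1$; but a Jordan block is not needed to be excluded for the \emph{statement} about eigenvalues, only—if desired—to conclude that some power $\varphi^N$ is unipotent or even the identity on $\N^1(X)$, which is not asserted here. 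If I do want the cleaner ``$\varphi$ is quasi-unipotent with $\varphi^N$ unipotent'' I would note that $\varphi$ preserving the closed cone $\Nef(X)$ with all eigenvalues of modulus $1$ actually forbids a Jordan block of size $\ge 2$, by a standard cone argument: iterating such a block drives a nef class out of the cone. But for the lemma as stated, Kronecker suffices.)

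For the second assertion, that $f$ is an automorphism: I would show $\deg f = 1$. The cleanest route is via the top self-intersection of an ample class. Pick any ample Cartier divisor $H$ with class $h\in\Amp(X)\subseteq\N^1(X)$, and set $h' := \sum_{i=0}^{N-1}\varphi^i(h)$ where $N$ is chosen (using the first part) so that $\varphi^N$ has all eigenvalues equal to $1$—or, more robustly, average over the finite orbit structure to produce a $\varphi$-fixed \emph{nef and big} class. Concretely: since all eigenvalues of $\varphi$ are roots of unity, replacing $f$ by a power (which does not change whether $f$ is an automorphism, nor the eigenvalue conclusion, by Proposition \ref{prop-ext}(5) applied to the relevant statements) we may assume $\varphi$ is unipotent. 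A unipotent automorphism of $\N^1(X)$ preserving the ample cone must fix some ample class: indeed $(\varphi - \id)$ is nilpotent, so $\ker(\varphi-\id)\ne 0$; one checks the fixed subspace meets the interior of $\Nef(X)$ because otherwise iterating $\varphi$ on an ample class, written in a Jordan basis, would eventually leave the closed cone. Let $a\in\Amp(X)$ be a $\varphi$-fixed ample class represented by an ample $\Q$-Cartier divisor $A$ (clearing denominators). Then $f^*A \equiv A$, so $(f^*A)^n = (\deg f)\,A^n$ as intersection numbers while $(f^*A)^n \equiv A^{\,n}$ gives $(f^*A)^n = A^n$; since $A^n > 0$ we get $\deg f = 1$, i.e.\ $f$ is a finite surjective morphism of degree one between varieties, hence an isomorphism (it is birational and finite, so an isomorphism as $X$ is—after passing to normalization if necessary—normal; in fact a degree-one finite morphism onto a variety is an isomorphism onto its image).

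The main obstacle I anticipate is the cone argument in the second part: showing that a $\varphi$ with unipotent (or finite-order) action preserving $\Nef(X)$ must fix an \emph{interior} (ample) class, rather than only a boundary class. If the $\varphi$-fixed subspace of $\N^1(X)$ met $\Nef(X)$ only along its boundary, one could not produce the ample $A$ directly. The resolution is the observation that the closed cone $\Nef(X)$ is $\varphi$-invariant and salient, $\varphi$ is invertible with $\varphi^{-1}$ also cone-preserving, and on such a cone an invertible map with all eigenvalues on the unit circle cannot have a Jordan block of size $\ge 2$ (the block would push some nef vector to infinity in a direction that exits the cone), so $\varphi$ is of finite order; a finite-order linear automorphism preserving a full-dimensional cone fixes an interior point of that cone (average an ample class over the finite group $\langle\varphi\rangle$: the average of ample classes stays ample, and is $\varphi$-fixed). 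That average furnishes the desired $\varphi$-fixed ample class and the rest is the intersection-number computation above. Throughout, I would be careful that ``replacing $f$ by a power'' is harmless for both conclusions: it does not affect whether $f$ is an automorphism, and the eigenvalues of $\varphi^N$ being roots of unity is equivalent to those of $\varphi$ being roots of unity.
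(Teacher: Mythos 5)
Your first half is sound in substance but one justification is wrong: it is not true that an invertible map and its inverse both preserving a salient generating closed cone must have equal spectral radii ($\mathrm{diag}(2,1)$ and its inverse both preserve the positive quadrant, yet the spectral radii are $2$ and $1$). You do not need that step: since $f^*f_*=(\deg f)\,\id$, the map $f^*|_{\NS(X)}$ is invertible with monic integral characteristic polynomial and nonzero constant term, so every eigenvalue is a nonzero algebraic integer all of whose conjugates are again eigenvalues, hence of modulus $\le 1$ by null entropy; Kronecker's theorem then gives roots of unity. This is exactly the paper's argument for the first assertion.

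The second half has a genuine gap. Both mechanisms you propose for producing an $f^*$-fixed ample class fail. (a) It is false that $\varphi=f^*|_{\N^1(X)}$ with $\varphi(\Nef(X))=\Nef(X)$ and all eigenvalues of modulus $1$ can have no Jordan block of size $\ge 2$: an infinite-order automorphism of an elliptic projective K3 surface acting by translations in the fibres has null entropy, preserves the nef cone, and acts on $\N^1(X)$ by a parabolic (non-semisimple) map with $\|(f^m)^*\|$ growing like $m^2$; so $\varphi$ need not have finite order and averaging over $\langle\varphi\rangle$ is unavailable. (b) The claim that otherwise ``iterating $\varphi$ on an ample class would eventually leave the closed cone'' is also false: the iterates of an ample class are again ample, so no contradiction arises; in the parabolic example the $\varphi$-fixed subspace meets $\Nef(X)$ only along the boundary (the fibre class), and no fixed ample, or even big, class exists. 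Hence the computation $(f^*A)^{\dim X}=(\deg f)A^{\dim X}$ with a fixed ample $A$ cannot be run. The paper avoids this entirely: it keeps the Jordan basis $x_1,\dots,x_r$ (blocks allowed), chooses a maximal multi-index $(a_1,\dots,a_r)$ with $x_1^{a_1}\cdots x_r^{a_r}\neq 0$, and gets $(\deg f)\,x_1^{a_1}\cdots x_r^{a_r}=(f^*x_1)^{a_1}\cdots (f^*x_r)^{a_r}=x_1^{a_1}\cdots x_r^{a_r}$, so $\deg f=1$. If you want to keep your route, replace the fixed-class step by a growth comparison: with all eigenvalues roots of unity, $\|(f^m)^*|_{\N^1(X)}\|$ grows at most polynomially in $m$, whereas $((f^m)^*H)^{\dim X}=(\deg f)^m H^{\dim X}$ for $H$ ample, which forces $\deg f=1$.
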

\begin{proof}
Note that all the eigenvalues of $f^*|_{\NS(X)}$ are algebraic integers of modulus $\le 1$ and $0\neq \det f^*|_{\NS(X)}\in \mathbb{Z}$.
If some eigenvalue has modulus $<1$, then $0<|\det f^*|_{\NS(X)}|<1$, a contradiction.
So all eigenvalues are of modulus $1$.
By Kronecker's theorem,
we may further assume all the eigenvalues of $f^*|_{\N^1(X)}$ are $1$, after replacing $f$ by a positive power.
Let $x_1,\cdots, x_r$ be a basis of $\N^1(X)$ such that either $f^*x_i=x_i$ or $f^*x_i=x_i+x_{i+1}$.
Let $(a_1,\cdots, a_r)$ be a sequence of non-negative integers such that $\sum\limits_{i=1}^ra_i=\dim(X)$.
We define a partial order that $(a_1,\cdots, a_r)<(b_1,\cdots, b_r)$, if for some $k$, $a_k<b_k$ and $a_i\le b_i$ for any $i\ge k$.
Let $(a_1,\cdots, a_r)$ be the maximal one such that $x_1^{a_1}\cdots x_r^{a_r}\neq 0$.
Then $(\deg f)x_1^{a_1}\cdots x_r^{a_r}=(f^*x_1)^{a_1}\cdots (f^*x_r)^{a_r}=x_1^{a_1}\cdots x_r^{a_r}\neq 0$.
So $\deg f=1$.
\end{proof}

\begin{lemma}\label{lem-amp-pos}
Let $f:X\to X$ be an amplified endomorphism of a projective variety $X$.
Then $f$ is of positive entropy.
\end{lemma}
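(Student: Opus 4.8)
The plan is to argue by contrapositive: if $f$ is not of positive entropy, then $f$ is not amplified. So assume $f$ has null entropy, i.e.\ the spectral radius of $\varphi := f^*|_{\N^1(X)}$ equals $1$. By Lemma \ref{lem-np-aut}, all eigenvalues of $\varphi$ are roots of unity, so after replacing $f$ by a positive power (which is harmless for the amplified property by Proposition \ref{prop-ext}(5)) we may assume every eigenvalue of $\varphi$ equals $1$; equivalently, $\varphi - \id$ is nilpotent on $\N^1(X)$.

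The first step is to observe that in this situation $\varphi$ cannot have a fixed ample class forced by the equation $f^*D - D$ being ample; more precisely I want to show the image $V := \Imm(\varphi - \id)$ misses the ample cone. The cleanest route is dual, via Proposition \ref{prop-a-cri}: it suffices to produce a nonzero pseudo-effective $1$-cycle class $Z \in \NE(X)$ with $f_* Z \equiv_w Z$, i.e.\ a nonzero class killed by $(f_*|_{\N_1(X)} - \id)$. Now $f_*|_{\N_1(X)}$ is (up to the perfect pairing $\N^1(X) \times \N_1(X) \to \R$) the transpose of $\varphi = f^*|_{\N^1(X)}$, since $(f^*D)\cdot Z = D \cdot (f_* Z)$. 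Hence $f_* - \id$ on $\N_1(X)$ is the transpose of $\varphi - \id$, which is nilpotent; so $f_* - \id$ is nilpotent on $\N_1(X)$ and in particular has nontrivial kernel. Pick $0 \neq Z_0 \in \N_1(X)$ with $(f_* - \id)Z_0 = 0$.

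The remaining step, which is the main obstacle, is to arrange that such a fixed class can be taken \emph{pseudo-effective and nonzero}, so that Proposition \ref{prop-a-cri} applies. The kernel of $f_* - \id$ is a nonzero linear subspace $W \subseteq \N_1(X)$, but a priori $W$ could meet $\NE(X)$ only in $0$. To rule this out I would use that $f_*$ preserves the salient full-dimensional cone $\NE(X)$ and is invertible on it (it is a linear automorphism of $\N_1(X)$ preserving $\NE(X)$, with $f^{-1}_* = \frac{1}{\deg f} f^*$ on $\N_1(X)$... more carefully, one invokes that $f$ finite plus projection formula give $f^* f_* = (\deg f)\id$, so $f_*$ is invertible). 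A linear automorphism of a finite-dimensional real vector space preserving a salient full-dimensional closed cone has, by a Perron–Frobenius / Brouwer fixed-point argument applied to the action on the compact base of the cone, an eigenvector inside the cone with positive eigenvalue; when all eigenvalues are $1$ (after the power reduction) that eigenvector lies in $\ker(f_* - \id) \cap (\NE(X) \setminus \{0\})$. This gives the desired $Z$ and, by Proposition \ref{prop-a-cri}, shows $f$ is not amplified, completing the contrapositive. An alternative that avoids the fixed-point machinery: since $\varphi - \id$ is nilpotent, for any ample class $A$ the sequence $\varphi^m(A)$ grows only polynomially, whereas if $f^*D - D$ were ample one checks $\varphi^m(D) - D = \sum_{i<m}\varphi^i(f^*D - D)$ would have to stay in the ample cone and its "size" would be controlled wrongly — but the Perron–Frobenius route is the cleaner one to write.
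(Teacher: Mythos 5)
Your proof is correct and follows essentially the same route as the paper: dualize via the projection formula to $f_*|_{\N_1(X)}$, produce a nonzero class $Z\in\NE(X)$ with $f_*Z\equiv_w Z$ by a Perron--Frobenius (cone fixed-point) argument, and contradict Proposition \ref{prop-a-cri}. The only difference is cosmetic: the paper does not pass to a power to make the eigenvalues equal to $1$, but applies Perron--Frobenius directly, noting that all eigenvalues of $f_*|_{\N_1(X)}$ have modulus $1$ so the spectral radius $1$ is attained by an eigenvector in $\NE(X)$.
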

\begin{proof}
Suppose $f$ is of null entropy.
By Lemma \ref{lem-np-aut}, $f$ is an automorphism.
By the projection formula, $f_*|_{\N_1(X)}$ is the dual action of $f^*|_{\N^1(X)}$.
In particular, all the eigenvalues of $f_*|_{\N_1(X)}$ are of modulus $1$. 
By the Perron-Frobenius theorem, $f_*Z\equiv_w Z$ for some $Z\in \NE(X)\backslash\{0\}$, a contradiction by Proposition \ref{prop-a-cri}.
\end{proof}

\begin{lemma}\label{lem-pcd-pos}
Let $f:X\to X$ be a PCD endomorphism of a smooth projective variety $X$.
Then $f$ is of positive entropy.
\end{lemma}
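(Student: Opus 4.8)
The plan is to argue by contradiction. Assume $f$ is of null entropy and show that $\Per(f)$ cannot be Zariski dense, which by Proposition \ref{prop-ext}(3) contradicts the PCD hypothesis. Before anything else I would reduce to $k=\C$: the pair $(X,f)$ is defined over a subfield $k_0\subseteq k$ that is finitely generated over $\Q$, and fixing an embedding $\overline{k_0}\hookrightarrow\C$ makes both $k$ and $\C$ algebraically closed extensions of $\overline{k_0}$; by Proposition \ref{prop-ext}(2) and (4) neither ``of positive entropy'' nor ``PCD'' changes under algebraically closed base field extensions, so it suffices to treat $X_\C$ and $f_\C$. Thus I assume $k=\C$ and $\dim X\ge 1$.

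The key input is cohomological. By Lemma \ref{lem-np-aut}, $f$ is an automorphism and every eigenvalue of $f^*|_{\N^1(X)}$ is a root of unity, so $\lambda_1(f):=\rho\!\big(f^*|_{\N^1(X)}\big)=1$. I would then invoke the theory of dynamical degrees of automorphisms of compact K\"ahler manifolds: $p\mapsto\log\lambda_p(f)$ is concave on $\{0,1,\dots,n\}$ with $\lambda_0(f)=\lambda_n(f)=1$ (Gromov, Dinh--Sibony), so $\lambda_1(f)=1$ forces $\lambda_p(f)=1$ for all $p$; and the mixed inequality $\rho\!\big(f^*|_{H^{p,q}(X)}\big)^2\le\lambda_p(f)\lambda_q(f)$ --- equivalently, Gromov's entropy bound together with Yomdin's inequality $h_{\mathrm{top}}(f)\ge\log\rho\!\big(f^*|_{H^*(X,\C)}\big)$ --- then gives $\rho\!\big(f^*|_{H^{p,q}(X)}\big)\le 1$ for all $p,q$. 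Since the characteristic polynomial of $f^*$ on each lattice $H^k(X,\Z)/\mathrm{tors}$ lies in $\Z[t]$ and has nonzero roots of modulus $\le 1$ (all Galois conjugates being again such roots), Kronecker's theorem makes every eigenvalue of $f^*$ on $H^*(X,\C)$ a root of unity. In particular $\big|\operatorname{tr}\!\big((f^n)^*|_{H^k(X,\Q)}\big)\big|\le b_k(X)$ for every $n$, so the topological Lefschetz numbers obey $|\Lambda(f^n)|\le B:=\sum_k b_k(X)$ for all $n\ge 1$.

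Next I would count periodic points. By Proposition \ref{prop-ext}(3) the PCD hypothesis says $\Per(f)$ is Zariski dense and each $\Fix(f^n)$ is finite. When $\Fix(f^n)$ is finite, the Lefschetz--Hopf fixed point formula gives $\Lambda(f^n)=\sum_{x\in\Fix(f^n)}\operatorname{ind}_x(f^n)$, and since $f^n$ is holomorphic each local index at an isolated fixed point equals the local intersection multiplicity $(\Gamma_{f^n}\cdot\Delta_X)_{(x,x)}\ge 1$; hence $\#\Fix(f^n)\le\Lambda(f^n)\le B$. As $\Fix(f^i)\subseteq\Fix(f^{ij})$ for all $i,j$, the set $\Per(f)=\bigcup_{m\ge 1}\Fix(f^{m!})$ is an increasing union of finite sets of cardinality $\le B$, hence finite --- impossible for a Zariski dense subset of a variety of positive dimension. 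This contradiction proves that $f$ is of positive entropy.

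The hard part is the passage, in the second step, from the hypothesis ``$\rho(f^*|_{\N^1(X)})\le 1$'' to the quasi-unipotence of $f^*$ on all of $H^*(X,\C)$: this is precisely where smoothness (Poincar\'e duality, the Hodge decomposition, compact K\"ahler geometry) and the dynamical-degree machinery enter in an essential way, everything after it being an elementary Lefschetz--Hopf count. I would also take care to check the positivity of the holomorphic local Lefschetz index at an isolated fixed point and that the reduction to $\C$ is clean.
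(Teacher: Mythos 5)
Your proof is correct and follows essentially the same route as the paper: reduce to $\mathbb{C}$ via Proposition \ref{prop-ext}, use Lemma \ref{lem-np-aut} to make $f$ an automorphism with quasi-unipotent action on $\N^1(X)$, upgrade this to quasi-unipotence of $f^*$ on all of $H^*(X,\mathbb{Z})$, and then contradict density of $\Per(f)$ by a Lefschetz fixed point bound using finiteness of each $\Fix(f^n)$ and positivity of the holomorphic local indices. The only difference is presentational: where you unpack the dynamical-degree machinery (log-concavity and the bound $\rho(f^*|_{H^{p,q}})^2\le\lambda_p\lambda_q$), the paper simply cites Dinh's Propositions 3.5 and 3.6 together with Kronecker's theorem, and it replaces $f$ by a power so the uniform bound becomes the Euler characteristic rather than your $\sum_k b_k(X)$.
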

\begin{proof}
Let $K$ be a finitely generated field over $\mathbb{Q}$ such that $f:X\to X$ is defined over $K$.
Then there is a field extension $\mathbb{C}/K$. 
So we may assume $X$ is defined over $\mathbb{C}$ by Proposition \ref{prop-ext}.
Suppose $f$ is of null entropy.
By Lemma \ref{lem-np-aut}, $f$ is an automorphism.
By \cite[Propositions 3.5 and 3.6]{Dinh12} and Kronecker's theorem, we may assume all the eigenvalues of $f^*|_{H^i(X,\mathbb{Z})}$ are $1$ for each $i$ after replacing $f$ by a positive power.
Since $f$ is PCD, $\Fix(f^n)$ is finite for any $n>0$ by Proposition \ref{prop-ext}.
Applying the Lefschetz fixed point formula, we have
$$\sharp \Fix(f^n)\le \sum\limits_i (-1)^i tr((f^n)^*|_{H^i(X,\mathbb{C})})=\sum\limits_i (-1)^ih^i(X,\mathbb{C})=e(X),$$
where $tr$ is the trace, $e(X)$ is the Euler characteristic of $X$, and $\sharp \Fix(f^n)$ counts $\Fix(f^n)$ without multiplicities.
However, $\Per(f)$ is infinite and hence the set $\{\sharp \Fix(f^n)\,|\, n>0\}$ has no upper bound, a contradiction. 
\end{proof}

In general, we ask the following question. 
\begin{question}\label{que-pos}
Let $f:X\to X$ be a quasi-amplified or PCD endomorphism of a projective variety $X$.
Is $f$ of positive entropy?
\end{question}

In the rest of this section, we consider the lifting and descending problems.
\begin{lemma}\label{lem-fin-lift-des}
Let $\pi:X\to Y$ be a finite surjective morphism of projective varieties.
Let $f:X\to X$ and $g:Y\to Y$ be surjective endomorphisms such that $g\circ \pi=\pi\circ f$.
Then $f$ is PCD if and only if so is $g$.
\end{lemma}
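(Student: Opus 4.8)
The plan is to invoke Proposition \ref{prop-ext}(3), which reduces ``$f$ is PCD'' to the conjunction of two conditions: (a) $\Per(f)$ is Zariski dense in $X$, and (b) $\Fix(f^j)$ is finite for every $j>0$. So I would prove that (a) holds for $f$ iff it holds for $g$, and likewise for (b). Two elementary facts about $\pi$ will be used throughout: since $\pi$ is finite, it is a closed map whose fibres are finite with cardinality bounded by a single integer $N$ (cover $Y$ by finitely many affines $\operatorname{Spec}A$ over which $X=\operatorname{Spec}B$ with $B$ generated by $m$ elements as an $A$-module, so each fibre has at most $m$ points, and let $N$ be the maximum of these $m$'s); and $\dim X=\dim Y$ because $\pi$ is finite and surjective. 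I also use that $X$ and $Y$ are irreducible.

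The core of the argument will be an ``eventual image'' construction, carried out for each fixed $i>0$. Put $Z:=\Fix(g^i)$, a closed subset of $Y$ on which $g^i$ is the identity, and $W:=\pi^{-1}(Z)$. From $\pi\circ f^i=g^i\circ\pi$ and $g^i|_Z=\id_Z$ one gets $\pi(f^i(x))=\pi(x)$ for all $x\in W$, hence $f^i(W)\subseteq W$; since $X$ is projective, $f^i$ is proper, so $\{(f^i)^m(W)\}_m$ is a descending chain of closed subsets, stabilizing at some $W_\infty=(f^i)^{m_0}(W)$ with $f^i(W_\infty)=W_\infty$. Applying $\pi$ gives $\pi(W_\infty)=g^{im_0}(Z)=Z$, so $\pi|_{W_\infty}\colon W_\infty\to Z$ is finite surjective and $\dim W_\infty=\dim Z$. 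Finally, for each $z\in Z$ the set $F_z:=\pi^{-1}(z)\cap W_\infty$ has at most $N$ elements and is mapped \emph{onto} itself by $f^i$: into itself because $\pi\circ f^i=\pi$ on $W$ and $W_\infty$ is $f^i$-stable, and onto itself because $f^i(W_\infty)=W_\infty$ (if $f^i(v)=w'\in F_z$ with $v\in W_\infty$, then $\pi(v)\in Z$ and $\pi(v)=g^i(\pi(v))=\pi(f^i(v))=z$, so $v\in F_z$). A surjective self-map of a finite set is a bijection, so $f^i$ acts on $F_z$ as a permutation of $\le N$ letters and $(f^i)^{N!}$ fixes $F_z$ pointwise; thus $W_\infty\subseteq\Fix(f^{iN!})$ while $\pi(W_\infty)=\Fix(g^i)$. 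The hard part — and the reason this detour is needed — is exactly here: one cannot argue directly that $\pi^{-1}(\Per(g))\subseteq\Per(f)$, since over a $g$-periodic point $f^i$ may act on the finite fibre as a non-injective self-map, so a fibre point is a priori only \emph{pre}periodic; passing to $W_\infty$ forces the fibrewise action to be bijective, hence of finite order, which both repairs this and yields the dimension bound.

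With this in hand, (b) is immediate: if every $\Fix(g^j)$ is finite then $\pi(\Fix(f^j))\subseteq\Fix(g^j)$ is finite and $\pi$ has finite fibres, so $\Fix(f^j)$ is finite; conversely if some $\Fix(g^i)$ is infinite then $\dim W_\infty=\dim\Fix(g^i)\ge 1$, so $\Fix(f^{iN!})\supseteq W_\infty$ is infinite. For (a), taking the union over $i$ gives $\Per(g)=\bigcup_i\Fix(g^i)=\bigcup_i\pi(W_{i,\infty})\subseteq\pi(\Per(f))$, and $\pi(\Per(f))\subseteq\Per(g)$ is clear, so $\pi(\Per(f))=\Per(g)$; it then remains to observe that, for a finite surjective morphism $\pi$ of irreducible varieties, a subset $S\subseteq X$ is Zariski dense iff $\pi(S)$ is (if $S$ is dense then $Y=\pi(\overline S)\subseteq\overline{\pi(S)}$; conversely $\overline{\pi(S)}=\pi(\overline S)$ since $\pi$ is closed, and $\overline S\subsetneq X$ would force $\dim\pi(\overline S)\le\dim\overline S<\dim X=\dim Y$). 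Applying this to $S=\Per(f)$ completes the argument.
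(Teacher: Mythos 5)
Your proof is correct, and it follows a somewhat different route than the paper's. The paper's own argument shares the key mechanism---over a point $y\in\Fix(g^i)$ the finite fibre $\pi^{-1}(y)$ is mapped into itself by $f^i$, so its forward orbits fall into cycles and the fibre contains $f$-periodic points---but packages it differently: after reducing to an uncountable base field via Proposition \ref{prop-ext}(4), it shows directly that $\pi(\Per(f))=\Per(g)$ (the inclusion $\supseteq$ by exactly the fibre observation above) and then transfers countability and Zariski density through the finite surjective map, much as in your final paragraph. Because you instead work with the criterion of Proposition \ref{prop-ext}(3) (density of $\Per$ plus finiteness of every $\Fix$), you need a \emph{uniform} period over an infinite $\Fix(g^i)$, and your eventual-image set $W_\infty$ together with the fibre bound $N$ supplies it, giving $\Fix(g^i)=\pi(W_\infty)$ with $W_\infty\subseteq\Fix(f^{iN!})$. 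Two remarks. First, the $W_\infty$ detour is heavier than necessary even for your route: since every fibre has at most $N$ points, the cycle that the forward $f^i$-orbit of any point of $\pi^{-1}(y)$, $y\in\Fix(g^i)$, eventually enters has length at most $N$, so that fibre already meets $\Fix(f^{iN!})$; one does not need the fibrewise action to be surjective, because (contrary to the worry you flag) it suffices to find \emph{some} periodic point in each fibre rather than to make every fibre point periodic, and distinct base points give distinct such points, which is all that the infinitude of $\Fix(f^{iN!})$ and the inclusion $\Fix(g^i)\subseteq\pi(\Fix(f^{iN!}))\subseteq\pi(\Per(f))$ require. Second, what each approach buys: yours yields the quantitative inclusion $\Fix(g^i)\subseteq\pi(\Fix(f^{iN!}))$ and avoids the explicit passage to an uncountable field, while the paper's bookkeeping with countable dense sets is shorter precisely because no uniform period is needed---a countable union of finite sets is countable, and countability and density pass back and forth through a finite surjective morphism exactly as you observe.
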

\begin{proof}
By Proposition \ref{prop-ext}, we may work assume the base field is uncountable.
Note that $\pi(\Per(f))\subseteq \Per(g)$.
For any $y\in \Per(g)$, since $\pi^{-1}(y)$ is finite, there exists some $x\in \Per(f)\cap \pi^{-1}(y)$.
So $\pi(\Per(f))=\Per(g)$.
Clearly, $\Per(f)$ is countable and Zariski dense if and only if so is $\Per(g)$.
\end{proof}

\begin{lemma}\label{lem-prod}
Let $f:X\to X$ and $g:Y\to Y$ be surjective endomorphisms of projective varieties.
Then $f\times g$ is PCD (resp.~amplified, quasi-amplified) if and only if so are $f$ and $g$.
\end{lemma}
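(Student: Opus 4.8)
The plan is to prove Lemma \ref{lem-prod} by treating the three properties separately, in each case exploiting the fact that the relevant cone or fixed-point set of a product decomposes as a product.

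\medskip

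\emph{The amplified and quasi-amplified cases.} First I would record the standard fact that for a product $X\times Y$ with projections $p:X\times Y\to X$ and $q:X\times Y\to Y$, pullback induces an isomorphism $\N^1(X\times Y)\isom \N^1(X)\oplus \N^1(Y)$, compatibly with the endomorphism actions: under this identification $(f\times g)^*$ acts as $f^*\oplus g^*$. Moreover a class $(\alpha,\beta)$ is ample on $X\times Y$ iff $\alpha\in\Amp(X)$ and $\beta\in\Amp(Y)$, and similarly $(\alpha,\beta)$ is big iff both $\alpha$ and $\beta$ are big. Granting this, if $f^*D_X-D_X$ is ample and $g^*D_Y-D_Y$ is ample, then $(f\times g)^*(p^*D_X+q^*D_Y)-(p^*D_X+q^*D_Y)$ has class $(f^*D_X-D_X,\, g^*D_Y-D_Y)\in\Amp(X)\times\Amp(Y)$, hence is ample by Proposition \ref{prop-ext}(1); so $f\times g$ is amplified, and the same computation with ``big'' handles the quasi-amplified case. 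Conversely, suppose $f\times g$ is quasi-amplified, say $(f\times g)^*D-D$ is big with class $(\alpha,\beta)$ where $\alpha=f^*(\cdot)-(\cdot)$ lies in the image of $f^*-\id$ on $\N^1(X)$ and similarly $\beta$ for $g$; since a big class on the product projects (restricts to a fibre) to a big class, $\alpha\in\Bigc(X)$ and $\beta\in\Bigc(Y)$, so by Proposition \ref{prop-ext}(1) both $f$ and $g$ are quasi-amplified. The amplified direction is identical with ``ample''. The one point needing a line of justification is that bigness/ampleness of $(\alpha,\beta)$ on the product is equivalent to bigness/ampleness of each coordinate; ampleness is immediate (restrict to a fibre in one direction, and use that a sum of pullbacks of amples is ample), and for bigness one restricts to a general fibre and uses that the restriction of a big divisor to a general member of a base-point-free-in-the-other-direction family stays big, or more simply notes $\vol_{X\times Y}(p^*\alpha+q^*\beta)>0$ forces $\vol_X(\alpha)>0$ and $\vol_Y(\beta)>0$ by the product formula for volumes.

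\medskip

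\emph{The PCD case.} Here I would invoke Proposition \ref{prop-ext}(4) to reduce to an uncountable base field, and then observe that $\Fix((f\times g)^i)=\Fix(f^i)\times\Fix(g^i)$ and hence $\Per(f\times g)=\Per(f)\times\Per(g)$. If $f$ and $g$ are both PCD, then each $\Fix(f^i)$ and $\Fix(g^i)$ is finite, so $\Fix((f\times g)^i)$ is finite for all $i$, and $\Per(f)$, $\Per(g)$ being countable and Zariski dense makes $\Per(f)\times\Per(g)$ countable and Zariski dense in $X\times Y$; by Proposition \ref{prop-ext}(3), $f\times g$ is PCD. Conversely, if $f\times g$ is PCD, then $\Per(f)\times\Per(g)$ is countable and Zariski dense; Zariski density of a product in $X\times Y$ forces $\Per(f)$ dense in $X$ and $\Per(g)$ dense in $Y$ (project), and countability of the product forces each factor countable (each factor is nonempty, as any product of endomorphisms of projective varieties has a periodic point). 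Finiteness of $\Fix((f\times g)^i)=\Fix(f^i)\times\Fix(g^i)$ forces each $\Fix(f^i)$ and $\Fix(g^i)$ finite. So both $f$ and $g$ are PCD by Proposition \ref{prop-ext}(3).

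\medskip

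I expect the only mildly delicate point to be the bigness statement for the product (the volume product formula, or a restriction argument), together with the small bookkeeping that periodic-point sets are nonempty so that a product being countable genuinely forces each factor to be countable; everything else is a direct transcription of Proposition \ref{prop-ext}. None of this is a real obstacle.
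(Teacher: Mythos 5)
Your forward (``if'') direction and your PCD argument match the paper's proof: the divisor $p^*D_X+q^*D_Y$ computation and the identity $\Per(f\times g)=\Per(f)\times\Per(g)$ are exactly what the paper uses, and your extra bookkeeping (reduction to an uncountable field via Proposition \ref{prop-ext}, closure of a product being the product of closures) is fine. One small false parenthetical there: it is \emph{not} true that every surjective endomorphism of a projective variety has a periodic point (a translation by a non-torsion point of an abelian variety has $\Per=\emptyset$); but you do not need this, since Zariski density of $\Per(f)\times\Per(g)$ already forces both factors to be nonempty, after which countability of each factor follows.

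The genuine gap is in the converse for amplified/quasi-amplified, where your argument rests on the claim that pullback gives an isomorphism $\N^1(X\times Y)\isom \N^1(X)\oplus\N^1(Y)$ under which $(f\times g)^*$ acts as $f^*\oplus g^*$. This is false in general: for a product there are ``mixed'' classes coming from correspondences, i.e.\ from $\Hom(\Alb(X),\Pic^0(Y))$; already for $X=Y=E$ an elliptic curve, $\NS(E\times E)$ has rank at least $3$ (the two fibre classes and the diagonal), not $2$. Consequently you cannot write the class of $(f\times g)^*D-D$ as $(\alpha,\beta)$ with $\alpha$ in the image of $f^*-\id$, and your ``more simply'' alternative via the volume product formula for $p^*\alpha+q^*\beta$ collapses for the same reason, since the big class need not have that shape. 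The restriction-to-a-general-fibre route you also sketch is the paper's actual argument and does work, but it needs the step you omitted: writing the big divisor $B=(f\times g)^*D-D$ as $A+E$ with $A$ ample and $E$ effective, one restricts to $X\times\{y\}$ for $y$ general (so the fibre is not in $\Supp E$) and observes that
$B|_{X\times\{y\}}=f^*\bigl(D|_{X\times\{g(y)\}}\bigr)-D|_{X\times\{y\}}$,
where the two fibre restrictions are algebraically, hence numerically, equivalent; therefore $f^*D_y-D_y$ is big for $D_y:=D|_{X\times\{y\}}$, and Proposition \ref{prop-ext}(1) gives that $f$ is quasi-amplified (ample in place of big when $E=0$ gives the amplified case). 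With that substitution for the false K\"unneth-type decomposition, your proof coincides with the paper's.
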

\begin{proof}
Note that $\Per(f\times g)=\Per(f)\times \Per(g)$.
Clearly, $\Per(f\times g)$ is countable and Zariski dense if and only if so are $\Per(f)$ and $\Per(g)$.

Suppose $f^*D_X-D_X=A_X$ and $g^*D_Y-D_Y=A_Y$ with $A_X$ and $A_Y$ being ample (resp.~big).
Let $D=p_X^*D_X+p_Y^*D_Y$ where $p_X$ and $p_Y$ are the natural projections.
Then $(f\times g)^*D-D=p_X^*f^*D_X+p_Y^*g^*D_Y-p_X^*D_X-p_Y^*D_Y=p_X^*A_X+p_Y^*A_Y$ is ample (resp.~big).

Suppose $B=(f\times g)^*D-D$ is big.
Write $B=A+E$ where $A$ is ample and $E$ is effective.
For general $y\in Y$, $X\times \{y\}$ is not contained in the support of $E$.
Then $B|_{X\times \{y\}}$ is big and hence $f$ is quasi-amplified.
Similarly, so is $g$.
In particular, when $E=0$, both $f$ and $g$ are then amplified.
\end{proof}

\begin{lemma}\label{lem-cone-lim-pos} Let $f:V\to V$ be an invertible linear map of a positive dimensional real normed vector space $V$ such that $f(C)=C$ for a closed convex cone $C\subseteq V$ which spans $V$ and contains no line.
Suppose $x\in C^\circ$  (the interior part of $C$) and $y:=\lim\limits_{n\to +\infty} \frac{f^n(x)}{|f^n(x)|}$ exists.
Then $f(y)=ry$ where $r$ is the spectral radius of $f$.
\end{lemma}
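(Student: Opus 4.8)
The plan is to show that the limit direction $y$ must be an eigenvector for the largest eigenvalue, using the fact that on a salient (line-free) spanning cone, the spectral radius of a cone-preserving map is attained by a genuine eigenvalue with an eigenvector in the cone — a Perron–Frobenius type statement. First I would observe that $y \in C$, since $C$ is closed and each $f^n(x)/|f^n(x)|$ lies in $C$ (as $x \in C^\circ \subseteq C$ and $f(C) = C$); moreover $y \neq 0$ by construction (it has norm $1$). Next, writing $\lambda_n := |f^{n+1}(x)|/|f^n(x)|$, one has
\[
\frac{f^{n+1}(x)}{|f^{n+1}(x)|} = \frac{1}{\lambda_n}\, f\!\left(\frac{f^n(x)}{|f^n(x)|}\right),
\]
so the left side tends to $y$ while $f\big(f^n(x)/|f^n(x)|\big) \to f(y)$ by continuity of $f$; hence $\lambda_n$ converges, say to $\lambda \ge 0$, and $f(y) = \lambda y$. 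Since $f$ is invertible and $y \neq 0$, in fact $\lambda > 0$, so $y$ is a positive eigenvector lying in $C$.

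It then remains to identify $\lambda$ with the spectral radius $r$ of $f$. The inequality $\lambda \le r$ is immediate since $\lambda$ is (the modulus of) an eigenvalue of $f$. For the reverse inequality I would invoke the Perron–Frobenius theory for cones (e.g.\ the Krein–Rutman theorem, or Vandergraft's theorem on cone-preserving linear maps on finite-dimensional spaces): because $C$ is a closed convex cone that spans $V$ and contains no line, and $f(C) = C$, the spectral radius $r$ of $f$ is itself an eigenvalue of $f$ with an eigenvector $v \in C \setminus \{0\}$. To conclude $\lambda = r$, I would use that $x \in C^\circ$: since $C^\circ \neq \emptyset$ and $C$ is salient and spanning, there is a linear functional $\varphi$ strictly positive on $C \setminus \{0\}$, and one can bound the growth rate $|f^n(x)|$ from below by testing against the dual eigenvector of $f^*$ associated with $r$ (which lies in the dual cone), forcing $\lambda = \lim |f^{n+1}(x)|/|f^n(x)| \ge r$. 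Alternatively, and perhaps more cleanly, since $x$ is interior there is $c > 0$ with $x - c\,v \in C$, whence $f^n(x) - c\,r^n v \in C$ for all $n$; pairing with $\varphi$ gives $\varphi(f^n(x)) \ge c\,r^n\,\varphi(v)$, so $|f^n(x)|$ grows at rate at least $r$, giving $\lambda \ge r$ and hence $\lambda = r$.

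The main obstacle I expect is the clean extraction of the growth-rate lower bound $\lambda \ge r$ from interiority: one must be careful that $\lambda = \lim \lambda_n$ genuinely exists (this is handed to us once $f(y) = \lambda y$ is established with $y \neq 0$, as above) and that the comparison $f^n(x) \ge c\, r^n v$ in the cone order is legitimate, which uses only $f(C) = C$ applied $n$ times to $x - cv \in C$ together with $f(v) = rv$. A secondary point is to make sure the Perron–Frobenius input is stated in exactly the generality needed — a closed convex cone, spanning, line-free, with a (possibly non-continuous but here continuous) invertible linear map mapping the cone onto itself — so I would cite the finite-dimensional Vandergraft/Birkhoff form rather than the Banach-space Krein–Rutman version, to avoid an irrelevant compactness hypothesis. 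Once these pieces are in place, combining $\lambda \le r$ and $\lambda \ge r$ with $f(y) = \lambda y$ finishes the proof.
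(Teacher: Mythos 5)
Your proof is correct, and its skeleton largely coincides with the paper's: both produce, via the cone Perron--Frobenius theorem, an eigenvector $v\in C\setminus\{0\}$ with $f(v)=rv$; both show $f(y)=\lambda y$ with $\lambda=\lim_{n\to+\infty}|f^{n+1}(x)|/|f^n(x)|$, so that $\lambda\le r$; and both exploit $x\in C^\circ$ to write $x-\epsilon v\in C$ and push it forward by $f^n$. The genuine difference is the mechanism for the reverse inequality $\lambda\ge r$. The paper argues by contradiction: if $\lambda<r$ then $|f^n(x)|/r^n\to 0$, so the normalized vectors $f^n(x-\epsilon v)/|f^n(x-\epsilon v)|$ converge to $-v/|v|$, which then lies in the closed cone $C$ together with $v/|v|$, contradicting that $C$ contains no line. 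You instead use duality: since $C$ is closed, spanning and line-free (in finite dimensions), there is a linear functional $\varphi$ strictly positive on $C\setminus\{0\}$, and pairing $f^n(x)-c\,r^n v=f^n(x-cv)\in C$ with $\varphi$ gives $|f^n(x)|\ge c'r^n$, hence $\lambda\ge r$ directly. The two devices are equivalent in strength (existence of such a $\varphi$ is exactly dual to pointedness of $C$), and your version avoids the contradiction and the limit computation, at the cost of introducing the dual cone. One shared caveat, which you correctly flag: the Perron--Frobenius input (Vandergraft's finite-dimensional theorem) requires $\dim V<\infty$, which the statement does not literally assume but which holds in every application in the paper (e.g.\ $V=\N^1(X)$); the paper's own proof relies on the same implicit hypothesis, so this is not a defect of your argument relative to the original.
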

\begin{proof}
By the Perron-Frobenius theorem, $f(x_1)=rx_1$ for some $x_1\in C$.
By the assumption, $f(y)=\lim\limits_{n\to +\infty} \frac{f^{n+1}(x)}{|f^n(x)|}=(\lim\limits_{n\to +\infty} \frac{|f^{n+1}(x)|}{|f^n(x)|})y$.
Then $a:=\lim\limits_{n\to +\infty} \frac{|f^{n+1}(x)|}{|f^n(x)|}$ exists and $a\le r$.
Suppose $a<r$.
Then $\lim\limits_{n\to +\infty} \frac{|f^n(x)|}{r^n}=0$.
Since $x\in C^\circ$, $x-\epsilon x_1\in C$ for some $\epsilon>0$.
We have $\lim\limits_{n\to +\infty} \frac{|f^n(x)|}{|f^n(x-\epsilon x_1)|}=\lim\limits_{n\to +\infty}\frac{1}{|\frac{f^n(x)}{|f^n(x)|}-\frac{\epsilon r^n x_1}{|f^n(x)|}|}=\lim\limits_{n\to +\infty}\frac{1}{|y-\frac{\epsilon r^n x_1}{|f^n(x)|}|}=0.$
Then $\lim\limits_{n\to +\infty} \frac{f^n(x-\epsilon x_1)}{|f^n(x-\epsilon x_1)|}
=\lim\limits_{n\to +\infty} \frac{-\epsilon r^n x_1}{|f^n(x-\epsilon x_1)|}
=\lim\limits_{n\to +\infty} \frac{-\epsilon x_1}{|0-\epsilon x_1)|}
=-\frac{x_1}{|x_1|}$.
Note that $C$ contains no line and $f(C)=C$.
We get a contradiction.
\end{proof}

\begin{lemma}\label{lem-lift-des}
Let $\pi:X\to Y$ be a generically finite surjective morphism of projective varieties.
Let $f:X\to X$ and $g:Y\to Y$ be surjective endomorphisms such that $g\circ \pi=\pi\circ f$.
Then the following are true.
\begin{itemize}
\item[(1)] If $g$ is quasi-amplified, then so is $f$.
\item[(2)] $f$ is of positive entropy if and only if so is $g$.
\end{itemize}
\end{lemma}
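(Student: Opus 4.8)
The plan is to extract everything from the two intertwining relations carried by $\pi\circ f=g\circ\pi$: on N\'eron--Severi groups $f^*\circ\pi^*=\pi^*\circ g^*$, and on cycles $\pi_*\circ f_*=g_*\circ\pi_*$. For (1) this is immediate. If $g$ is quasi-amplified then, by Proposition \ref{prop-ext}(1), $B:=g^*D-D$ is a big $\R$-Cartier divisor on $Y$ for some $\R$-Cartier $D$, and $f^*(\pi^*D)-\pi^*D=\pi^*B$. Since $\pi$ is a generically finite surjective morphism of projective varieties one has $\vol_X(\pi^*B)=(\deg\pi)\vol_Y(B)>0$ (equivalently, pullback of global sections is injective since $\mathcal{O}_Y\hookrightarrow\pi_*\mathcal{O}_X$), so $\pi^*B$ is big and $f$ is quasi-amplified, again by Proposition \ref{prop-ext}(1). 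I expect part (1) to be routine.

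For (2) I will prove the sharper statement $d_1(f)=d_1(g)$, where $d_1$ is the first dynamical degree; this suffices because for a surjective endomorphism $h$ of a $d$-dimensional projective variety $Z$ one has $d_1(h)=\rho(h^*|_{\N^1(Z)})$, so ``positive entropy'' is exactly ``$d_1>1$''. The equality $d_1(h)=\rho(h^*|_{\N^1(Z)})$ itself follows from $(h^n)^*=(h^*)^n$ on $\N^1(Z)$ together with $\lim_n(h^{*n}A\cdot A^{d-1})^{1/n}=\rho(h^*|_{\N^1(Z)})$ for an ample class $A$: the upper bound uses $h^{*n}A\cdot A^{d-1}\le C\|h^{*n}\|$, and the lower bound uses Perron--Frobenius, namely a nonzero nef class $v$ with $h^*v=\rho\, v$, for which $A-\epsilon v$ is nef for small $\epsilon$ and $v\cdot A^{d-1}>0$, so $h^{*n}A\cdot A^{d-1}\ge\epsilon\rho^n\,v\cdot A^{d-1}$.

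To prove $d_1(f)=d_1(g)$, fix an ample class $H$ on $Y$; then $\pi^*H$ is nef and big on $X$ (nef because $H$ is ample, big by the volume computation above). One shows first that $d_1(f)$ can be computed against this nef and big class, i.e. $d_1(f)=\lim_n(f^{*n}(\pi^*H)\cdot(\pi^*H)^{d-1})^{1/n}$: the upper bound is as before since $\pi^*H$ is nef, and for the lower bound write $m\,\pi^*H=A_X+E$ with $A_X$ ample and $E$ effective on $X$, and discard the effective pieces one factor at a time — every remaining factor stays nef, so each intersection number only decreases, yielding $f^{*n}(m\,\pi^*H)\cdot(m\,\pi^*H)^{d-1}\ge f^{*n}(A_X)\cdot A_X^{d-1}$, whose $n$-th root tends to $d_1(f)$ by the previous step. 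Finally the projection formula for $\pi$ gives $f^{*n}(\pi^*H)\cdot(\pi^*H)^{d-1}=(\deg\pi)\,(g^{*n}H\cdot H^{d-1})$, and since $(\deg\pi)^{1/n}\to1$ we conclude $d_1(f)=\lim_n(g^{*n}H\cdot H^{d-1})^{1/n}=d_1(g)$.

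The main obstacle is precisely the reverse inequality $\rho(f^*|_{\N^1(X)})\le\rho(g^*|_{\N^1(Y)})$: the naive arguments — using the injective intertwiner $\pi^*$ on $\N^1$, or dually $\pi_*$ on $\N_1(X)$ with its $f_*$-invariant kernel — only ever produce $\rho(g^*)\le\rho(f^*)$, since a priori the Perron--Frobenius eigenray of $f^*$ could lie in directions contracted by $\pi$. What makes the argument go through is that $\pi^*H$, although possibly on the boundary of $\Nef(X)$, is still big and hence still detects the full first dynamical degree of $f$; this is the one nontrivial input, and it is the relative-dimension-zero instance of the invariance of dynamical degrees under generically finite semiconjugacy.
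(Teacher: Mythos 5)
Your part (1) is exactly the paper's argument (pull back $D$, use that a generically finite surjective $\pi$ preserves bigness of pullbacks), so nothing to add there. Part (2) is also correct, but you take a genuinely different route. The paper only needs the nontrivial direction ($g$ of null entropy $\Rightarrow$ $f$ of null entropy): it iterates so that $g^*|_{\N^1(Y)}$ becomes unipotent (Kronecker, as in Lemma \ref{lem-np-aut}), forms the limit $D=\lim_n (g^n)^*B/|(g^n)^*B|$ for a big $B$ on $Y$, transports it to $D'=\pi^*D$ with $f^*D'\equiv D'$, and then uses that $\pi^*B$ is big, i.e.\ an interior point of $\PE^1(X)$, so Lemma \ref{lem-cone-lim-pos} forces the limit ray of an interior orbit to be an eigenray for the spectral radius of $f^*|_{\N^1(X)}$, a contradiction with positive entropy. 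You instead prove the stronger quantitative statement $\rho(f^*|_{\N^1(X)})=\rho(g^*|_{\N^1(Y)})$ via the intersection-theoretic characterization $\rho(h^*|_{\N^1})=\lim_n(h^{*n}A\cdot A^{d-1})^{1/n}$, extended from ample to the nef and big class $\pi^*H$ by Kodaira's lemma plus term-by-term monotonicity, and then pushed through $\pi$ by the projection formula; this is the relative-dimension-zero instance of invariance of the first dynamical degree under generically finite semiconjugacy, and you correctly identify that the only real issue is the inequality $\rho(f^*)\le\rho(g^*)$ (the other one is the paper's ``trivial direction'' via injectivity of $\pi^*$). What each buys: the paper's proof is shorter because Lemmas \ref{lem-np-aut} and \ref{lem-cone-lim-pos} are already available and only one implication needs work; yours gives equality of spectral radii and avoids both the unipotent reduction and the cone-limit lemma, at the cost of two standard positivity inputs you assert without proof and should at least cite — that a nonzero nef class $v$ satisfies $v\cdot A^{d-1}>0$ for an ample $A$ (needed for your Perron--Frobenius lower bound), and Kodaira's lemma on a possibly singular projective variety. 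Both are standard, so I regard your argument as complete; note also that both proofs ultimately hinge on the same geometric input, the bigness of $\pi^*H$ (interior of $\PE^1(X)$ for the paper, full detection of $\rho(f^*)$ for you).
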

\begin{proof}
Suppose $g^*E-E$ is big.
Let $F:=\pi^*E$.
Then $f^*F-F=\pi^*(g^*E-E)$ is big since $\pi$ is generically finite.
So (1) is true.

For (2), one direction is trivial.
Suppose $f$ is of positive entropy and $g$ is of null entropy.
Replacing $g$ by a positive power, we may assume all the eigenvalues of $g^*|_{\N^1(Y)}$ are $1$.
Fix a norm on $\N^1(Y)$.
Let $B$ be a big Cartier divisor of $Y$.
Then $D:=\lim\limits_{n\to +\infty} \frac{(g^n)^*B}{|(g^n)^*B|}\in \PE^1(Y)$ exists and $g^*D\equiv D$.
Let $B':=\pi^*B$ which is big since $\pi$ is generically finite. 
Then $D':=\lim\limits_{n\to +\infty} \frac{(f^n)^*B'}{|(f^n)^*B'|}=\pi^*D\in \PE^1(X)$ exists and $f^*D'\equiv D'$, a contradiction by Lemma \ref{lem-cone-lim-pos}.
\end{proof}

In general, we ask the following question. We shall see later that it is true for the case of abelian varieties (cf.~Proposition \ref{prop-5lem}).
\begin{question}\label{que-des}
Let $\pi:X\to Y$ be a surjective morphism of projective varieties.
Let $f:X\to X$ and $g:Y\to Y$ be surjective endomorphisms such that $g\circ \pi=\pi\circ f$.
Suppose $f$ is amplified (resp.~quasi-amplified).
Will $g$ be amplified (resp.~quasi-amplified)?
\end{question}

\section{Equivariant contractions for quasi-amplified endomorphisms}

Let $V$ be a positive dimensional real vector space. 
Let $\mathcal{C}$ be a closed convex cone which spans $V$ and contains no line.
Let $F$ be a closed convex subcone of $\mathcal{C}$.
We say $F$ is an {\it extremal face} of $\mathcal{C}$ if for any $x, y\in \mathcal{C}$, $x+y\in F$ implies $x, y\in F$.
For any $0\neq x\in V$, denote by $$R_x:=\{ax\,|\,a\ge 0\}$$ the ray generated by $x$.
Let $R$ be a ray of $\mathcal{C}$.
We say an extremal ray $R$ is {\it isolated} if there exists a nonzero $x\in R$ and an open neighborhood $x\in U$, such that for any $y\in U$, either $y\in R$ or the ray $R_y$ generated by $y$ is not extremal.

\begin{lemma}\label{lem-fix-ray} Let $f:V\to V$ be an invertible linear map of a positive dimensional real normed vector space $V$ such that $f(\mathcal{C})=\mathcal{C}$ for a closed convex cone $\mathcal{C}\subseteq V$ which spans $V$ and contains no line.
Suppose $f(x)=qx$ for some $x\in \mathcal{C}^\circ$ and $q>0$.
Let $R$ be an isolated extremal ray of $\mathcal{C}$.
Then replacing $f$ by a positive power, $f(R)=R$ and $f|_R=q\,\id$.
\end{lemma}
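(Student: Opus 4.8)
The plan is to show that $f$ permutes the finitely many isolated extremal rays of $C$, so a power fixes each of them, and then to compute the eigenvalue on $R$ by pairing against a dual functional attached to $x$. First I would observe that the hypothesis $f(x) = qx$ with $x \in C^\circ$ lets us run the Perron--Frobenius/spectral-radius machinery as in Lemma \ref{lem-cone-lim-pos}: applying that lemma (or its proof) to $f$ and to $f^{-1}$ shows that the spectral radius of $f$ is $q$ and that of $f^{-1}$ is $1/q$, so $q$ is in fact the \emph{only} eigenvalue of $f$ of modulus $\ge q$ with an eigenvector in $C$, and dually $f^*$ has a linear functional $\ell$ with $\ell \in C^\vee$ (the dual cone), $f^*\ell = q\ell$, and $\ell(x) > 0$ since $x \in C^\circ$ forces $\ell$ to be strictly positive on $C \setminus \{0\}$.

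Next I would argue that $C$ has only finitely many isolated extremal rays. Indeed, ``isolated'' says each such ray $R$ has a neighbourhood (on a transversal slice of $C$) meeting the union of all extremal rays only in $R$ itself; since a compact slice of $C$ cannot contain infinitely many points that are pairwise separated by fixed-size neighbourhoods in this sense, there are finitely many of them, say $R_1, \dots, R_m$. Because $f$ is a linear automorphism with $f(C) = C$, it carries extremal rays to extremal rays and preserves the ``isolated'' property (it is a homeomorphism of the slice up to rescaling), hence $f$ permutes $\{R_1,\dots,R_m\}$. Replacing $f$ by $f^{m!}$ (a positive power, as permitted) we get $f(R_i) = R_i$ for every $i$, in particular $f(R) = R$.

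It remains to identify the scalar: $f(R) = R$ means $f(v) = \lambda v$ for a generator $v$ of $R$ and some $\lambda > 0$. Pair with the functional $\ell$ from the first step: $\lambda\,\ell(v) = \ell(f(v)) = (f^*\ell)(v) = q\,\ell(v)$, and $\ell(v) > 0$ because $v \in C \setminus\{0\}$ and $\ell$ is strictly positive there; hence $\lambda = q$, i.e.\ $f|_R = q\,\id$.

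The main obstacle I expect is the finiteness of the isolated extremal rays and the claim that $f$ preserves the ``isolated'' property — one must be careful that ``isolated extremal ray'' is a genuinely topological/affine notion stable under the linear automorphism $f$, and that the separation implicit in the definition really does bound the number of such rays on a compact cross-section (this uses that $C$ contains no line, so it has a compact base). The eigenvalue computation and the existence of the dual eigenfunctional $\ell$ are routine once Lemma \ref{lem-cone-lim-pos} and Perron--Frobenius are in hand; the only subtlety there is checking $\ell(v) \neq 0$, which follows from $x \in C^\circ$.
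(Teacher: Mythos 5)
Your argument breaks at its central step: the claim that $C$ has only finitely many isolated extremal rays is false in general, and nothing in the definition gives the uniform separation you invoke. ``Isolated'' only provides each such ray with \emph{its own} neighbourhood, of no fixed size, so on a compact cross-section the isolated extreme points can perfectly well accumulate at a non-isolated (or even an isolated-in-the-limit) extreme point. For instance, take the cone in $\mathbb{R}^3$ over the convex hull of the points $(\cos(1/n),\sin(1/n),1)$, $n\ge 1$, together with $(1,0,1)$ and $(-1,0,1)$: every ray over a point $(\cos(1/n),\sin(1/n),1)$ is an isolated extremal ray, and there are infinitely many of them. Consequently, even though $f$ does permute the set of isolated extremal rays (that part is fine), you cannot pass to $f^{m!}$, and a priori the orbit $\{f^n(R)\}$ could be infinite. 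The paper closes exactly this gap by a recurrence argument rather than a counting argument: after normalizing $q=1$, the fixed interior point forces $\frac1N<\|f^n\|<N$ for all $n\in\mathbb{Z}$ (by \cite[Proposition 2.9]{MZ}), so the orbit of a unit generator $y$ of $R$ is bounded; pigeonhole gives $a>b$ with $|f^{a-b}(y)-y|<r$, where $B(y,r)$ meets no extremal ray other than $R$, and since $f^{a-b}(y)$ spans an extremal ray it must lie in $R$. That is the step your proposal is missing.

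There is a second, smaller gap in your eigenvalue identification: an eigenfunctional $\ell\in C^\vee$ of $f^*$ need not be strictly positive on $\partial C$; the hypothesis $x\in C^\circ$ only yields $\ell(x)>0$, not $\ell(v)>0$ for a generator $v$ of the boundary ray $R$ (e.g.\ $C$ the first quadrant, $f=\id$, $\ell$ the first coordinate). This part, however, is easily repaired with material you already set up: since the spectral radii of $f$ and $f^{-1}$ are $q$ and $1/q$, every eigenvalue of $f$ has modulus exactly $q$ (this is also how the paper concludes, via the last argument of \cite[Proposition 2.9]{MZ}), so once some power $f^N$ satisfies $f^N(v)=\lambda v$ with $\lambda>0$, necessarily $\lambda=q^N$. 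So the eigenvalue step is patchable, but the finiteness/permutation step is a genuine missing idea.
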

\begin{proof}
Replacing $f$ by $f/q$, we may assume $q=1$.
Let $y\in R$ be of norm $1$.
For some $r>0$, $B(y,r)$ has no intersection with any extremal ray of $\mathcal{C}$ except $R$.
By \cite[Proposition 2.9]{MZ}, there exists a positive number $N$ such that $\frac{1}{N}<||f^n||<N$ for any $n\in \mathbb{Z}$.
So the set $\{f^n(y)\,|\, n\in \mathbb{Z}\}$ is bounded.
In particular, for some $a>b$, $|f^a(y)-f^b(y)|<\frac{r}{N}$.
Then $|f^{a-b}(y)-y|<N|f^a(y)-f^b(y)|<r$.
Note that the ray generated by $f^{a-b}(y)$ is also extremal in $\mathcal{C}$.
Then $f^{a-b}(y)\in R$.
By \cite[Proposition 2.9]{MZ}, all the eigenvalues of $f$ are of modulus $1$.
In particular, $f^{a-b}(y)=y$.
\end{proof}

The following is a slightly modified version of \cite[Lemma 2.7]{MZ}.
\begin{lemma}\label{lem-minface} Let $V$ be a positive dimensional real normed vector space.
Let $\mathcal{C}\subseteq V$ be a closed convex cone which spans $V$ and contains no line.
Let $x\in \mathcal{C}$ be a nonzero point.
Then there exists a unique minimal closed extremal face $F$ of $\mathcal{C}$ such that $x\in F$.
Furthermore, $x\in F^\circ$ (in the sense of the topology of the space spanned by $F$).
\end{lemma}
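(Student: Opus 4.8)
The plan is to prove existence by taking the intersection of all closed extremal faces of $C$ containing $x$, and uniqueness automatically from that minimality; then to establish the "interior" claim by a dimension/supporting-hyperplane argument. First I would recall that a \emph{closed extremal face} $F$ of $C$ is a closed convex subcone with the property that whenever $a+b\in F$ with $a,b\in C$, then $a,b\in F$; and that $C$ itself is such a face, so the collection $\mathcal{F}_x:=\{F\subseteq C\ :\ F\text{ is a closed extremal face},\ x\in F\}$ is nonempty. The key observation is that the intersection of an arbitrary family of closed extremal faces is again a closed extremal face: closedness and convexity are preserved under intersection, and the extremal property is preserved because $a+b\in\bigcap_\alpha F_\alpha$ forces $a+b\in F_\alpha$ for each $\alpha$, hence $a,b\in F_\alpha$ for each $\alpha$, hence $a,b\in\bigcap_\alpha F_\alpha$. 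Therefore $F:=\bigcap_{G\in\mathcal{F}_x}G$ is a closed extremal face containing $x$, and by construction it is contained in every member of $\mathcal{F}_x$, so it is the unique minimal one.

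For the second assertion, let $W:=\operatorname{span}(F)$ and suppose, for contradiction, that $x\notin F^\circ$, the relative interior of $F$ inside $W$. Then $x$ lies on the relative boundary of the closed convex set $F$ in $W$, so there is a supporting hyperplane of $F$ in $W$ through $x$: a nonzero linear functional $\ell$ on $W$ with $\ell\ge 0$ on $F$ and $\ell(x)=0$. I would set $F':=\{z\in F\ :\ \ell(z)=0\}=F\cap\ker\ell$. Then $F'$ is a closed convex subcone of $F$, it is a proper subset of $F$ (since $\ell\not\equiv 0$ on $W=\operatorname{span}F$, some point of $F$ has $\ell>0$), and it contains $x$. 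The point is that $F'$ is again an extremal face of $C$: if $a+b\in F'$ with $a,b\in C$, then since $F'\subseteq F$ and $F$ is an extremal face of $C$ we get $a,b\in F$, whence $\ell(a),\ell(b)\ge 0$; but $\ell(a)+\ell(b)=\ell(a+b)=0$ forces $\ell(a)=\ell(b)=0$, so $a,b\in F'$. Thus $F'\in\mathcal{F}_x$ and $F'\subsetneq F$, contradicting the minimality of $F$. Hence $x\in F^\circ$.

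The only genuinely delicate point is ensuring that the functional $\ell$ is chosen so that $F'=F\cap\ker\ell$ is \emph{strictly} smaller than $F$; this is exactly where one uses that $\ell$ is a nontrivial functional on $W=\operatorname{span}(F)$ rather than merely nonzero on $V$, which is guaranteed by the standard supporting-hyperplane theorem applied within $W$ to the convex set $F$ at a relative boundary point. One should also note that the hypotheses that $C$ spans $V$ and contains no line are not actually needed for this lemma in the stated generality (they guarantee $C$ is pointed and full-dimensional, which makes the picture cleaner but is not used in the argument above); I would simply remark that the proof goes through for any closed convex cone. Finally, I would point out that this is a mild restatement of \cite[Lemma 2.7]{MZ}, so one may alternatively just cite that result and verify the added "interior" clause by the supporting-hyperplane step alone.
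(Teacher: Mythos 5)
Your proof is correct and essentially self-contained, whereas the paper takes the citation route you mention at the end: it disposes of the case $x\in C^\circ$ trivially, invokes \cite[Lemma 2.7]{MZ} to produce the minimal closed extremal face when $x\in\partial C$, and then obtains the interior claim by applying that same lemma a second time to the cone $F$ at the relative boundary point $x$, which yields a proper extremal subface of $F$ through $x$ (extremal in $C$ as well) and contradicts minimality. Your argument replaces both citations: existence via the intersection of all closed extremal faces containing $x$ (a clean observation, since the extremal property passes to arbitrary intersections), and the interior claim via a supporting functional $\ell$ on $\operatorname{span}(F)$, which produces the proper subface $F\cap\ker\ell$ directly---morally the same contradiction as the paper's, but without outsourcing the construction. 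What the paper's version buys is brevity and consistency with the toolkit of \cite{MZ}; what yours buys is independence from that reference and the accurate observation that pointedness and fullness of $C$ are not needed. One caveat: your supporting-hyperplane step (and in fact the lemma as literally stated) tacitly requires that $F$ have nonempty relative interior in $\operatorname{span}(F)$, which is automatic only in finite dimensions; for an infinite-dimensional normed $V$ the conclusion $x\in F^\circ$ can fail (e.g.\ the positive cone in $\ell^1$ and a strictly positive $x$, where the minimal face is the whole cone, which has empty interior). Since the lemma is only applied to $\N^1(X)$ and $\N_1(X)$, this is harmless, but you should either add finite-dimensionality to your remark about superfluous hypotheses or note where it enters your argument.
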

\begin{proof}
If $x\in \mathcal{C}^\circ$, then $F=\mathcal{C}$ and the lemma is trivial.
If $x\in \partial \mathcal{C}$, then \cite[Lemma 2.7]{MZ} proves that such $F$ exists and is contained in $\partial \mathcal{C}$.
If $x\not\in F^\circ$, then $x\in \partial F$ and we can apply \cite[Lemma 2.7]{MZ} again.
However, this contradicts the minimality of $F$.
\end{proof}

We recall \cite[Definition 4.1]{MZ_PG}.
Note that condition (3) is satisfied in the log minimal model program.
\begin{definition}\label{def:extrem_ray} Let $X$ be a projective variety.
Let $C$ be a curve such that $R_C$ is an extremal ray in $\NE(X)$.
We say $C$ or $R_C$ is {\it contractible} if there is a surjective morphism
$\pi : X \to Y$ to a projective variety $Y$ such that
the following hold.

\begin{itemize}
\item[(1)] $\pi_*\mathcal{O}_X=\mathcal{O}_Y$.
\item[(2)] Let $C'$ be a curve in $X$.
Then $\pi(C')$ is a point if and only if $[C'] \in R_C$.
\item[(3)] Let $D$ be a $\Q$-Cartier divisor of $X$.
Then $D\cdot C=0$ if and only if $D\equiv \pi^*D_Y$ (numerical equivalence) for some $\Q$-Cartier divisor $D_Y$ of $Y$.
\end{itemize}
\end{definition}

\begin{theorem}\label{thm-cont}
Let $f:X\to X$ be a surjective endomorphism of a normal projective variety $X$ with $(X,\Delta)$ being klt for some effective $\Q$-divisor $\Delta$.
Suppose $f_*x\equiv_w \lambda x$ for some $\lambda>0$ and nonzero $x\in \NE(X)$ with $(K_X+\Delta)\cdot x\le 0$.
Then one of the following holds.
\begin{itemize}
\item[(1)] $D\cdot x\ge 0$ for any effective Cartier divisor $D$.
\item[(2)] Replacing $f$ by a positive power, $\lambda$ is a positive integer and $f_*C\equiv_w \lambda C$ for some rational curve $C$ with $R_C$ being a contractible extremal ray of $\NE(X)$.
\end{itemize}
\end{theorem}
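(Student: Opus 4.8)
The plan is to dichotomize according to whether the class $x$ pairs non-negatively with all effective Cartier divisors. If it does, we are in case (1) and there is nothing to prove, so assume instead that $D_0\cdot x<0$ for some effective Cartier divisor $D_0$. Then the class $x$ lies outside $\PE^1(X)^{\vee}$, hence outside $\Nef(X)^{\vee}$, so in particular $(K_X+\Delta)$ is not the only divisor negative on $x$; but more to the point, $x$ lies in the region of $\NE(X)$ cut out by some effective divisor being negative, which by the cone theorem for the klt pair $(X,\Delta)$ (using $(K_X+\Delta)\cdot x\le 0$) forces $x$ to lie in the $(K_X+\Delta)$-negative part $\NE(X)_{(K_X+\Delta)<0}$ of the cone; more precisely, writing $x$ as a limit of sums of $(K_X+\Delta)$-nonnegative classes plus finitely many extremal rational curve classes $R_{C_1},\dots,R_{C_m}$ (the cone theorem, cf.\ \cite[Theorem 3.7]{KM}), the fact that $D_0\cdot x<0$ while $D_0$ is effective (hence $D_0\cdot(\text{$(K_X+\Delta)$-nonnegative part})\ge 0$ after perturbing $D_0$ by an ample class, or more carefully using that the $(K_X+\Delta)$-nonnegative part is spanned by nef-adjacent classes) shows that $x$ has a nonzero component along at least one $R_{C_j}$.

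Next I would bring in the hypothesis $f_*x\equiv_w\lambda x$. The map $f_*$ preserves $\NE(X)$, preserves $(K_X+\Delta)$-negativity up to the scalar arising from $f^*(K_X+\Delta)\equiv \mu(K_X+\Delta)$ for some $\mu>0$ (here we may use $K_X+\Delta\sim_{\Q}0$ in the Calabi--Yau setting if needed, but the statement is for a general klt pair, so I would instead argue that $f_*$ permutes, up to scaling, the finitely many $(K_X+\Delta)$-negative extremal rays of $\NE(X)$ that appear with nonzero coefficient in the decomposition of $x$). Since there are only finitely many such rays in any region $(K_X+\Delta)\cdot(-)\le -\epsilon$, and $f_*x\equiv_w \lambda x$, replacing $f$ by a positive power makes $f_*$ fix each relevant extremal ray $R_{C_j}$ setwise; decomposing the $f_*$-eigenvalue equation $f_*x\equiv_w\lambda x$ along the direct-sum-like structure "(span of the $R_{C_j}$'s) $\oplus$ ($(K_X+\Delta)$-nonnegative part)" given by the cone theorem, and comparing with the action of $f_*$ on each $R_{C_j}$ (which is multiplication by a positive real, in fact a positive integer since $f_*$ acts on the integral lattice $N_1(X)_{\Z}$ and $R_{C_j}$ is a rational extremal ray, so $f_*[C_j]=d_j[C_j]$ with $d_j\in\Z_{>0}$), we conclude $\lambda=d_j$ for every $j$ with nonzero component, so $\lambda$ is a positive integer and $f_*C\equiv_w\lambda C$ for $C:=C_j$.

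Finally I would upgrade "$(K_X+\Delta)$-negative extremal ray" to "contractible extremal ray" in the sense of Definition \ref{def:extrem_ray}. This is where I expect the main work: for klt pairs the cone theorem gives a contraction morphism $\pi:X\to Y$ of any $(K_X+\Delta)$-negative extremal ray $R_C$ with $\pi_*\OO_X=\OO_Y$ and with $\pi(C')$ a point iff $[C']\in R_C$, which is exactly (1) and (2) of Definition \ref{def:extrem_ray}; condition (3) is the statement that $R_C^{\perp}$ in $N^1(X)$ equals $\pi^*N^1(Y)$, which again follows from the contraction theorem for klt pairs (the relative Picard number is one). So the dichotomy is: either $x$ pairs non-negatively with every effective Cartier divisor — case (1) — or, after replacing $f$ by a positive power, $x$ detects a $(K_X+\Delta)$-negative extremal rational curve $C$, the $f_*$-eigenvalue $\lambda$ equals the (integer) multiplier of $f_*$ on $R_C$, and $R_C$ is contractible — case (2). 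The subtle point to get right is the compatibility of the $f_*$-eigenvector equation with the (non-canonical but $f$-equivariant after iteration) splitting of $\NE(X)$ furnished by the cone theorem, i.e.\ showing that "$\lambda$" genuinely matches the multiplier on the extremal ray rather than merely being a convex-combination artifact; I would handle this by noting that the $(K_X+\Delta)$-nonnegative part is $f_*$-invariant and that $f_*$, acting on the quotient $N_1(X)/(\text{that part})$ which is spanned by the images of the finitely many extremal rays, must send $[x]$ to $\lambda[x]$, pinning $\lambda$ down to one of the integer multipliers.
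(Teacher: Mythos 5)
Your reduction to the case $D_0\cdot x<0$ is the right first move, but the way you then invoke the cone theorem does not work. You apply the cone theorem to the pair $(X,\Delta)$ itself and try to extract a $(K_X+\Delta)$-negative extremal ray appearing with nonzero coefficient in a decomposition of $x$. The hypothesis, however, only gives $(K_X+\Delta)\cdot x\le 0$, and in the situation where the theorem is actually used (Theorem \ref{thm-qa-a}, where $K_X+\Delta\equiv 0$) one has $(K_X+\Delta)\cdot x=0$: there are then no $(K_X+\Delta)$-negative extremal rays at all, $\NE(X)=\NE(X)_{(K_X+\Delta)\ge 0}$, and your decomposition produces no curve classes $R_{C_j}$ whatsoever. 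The auxiliary claim that the effective divisor $D_0$ pairs non-negatively with the $(K_X+\Delta)$-nonnegative part of the cone is also false in general (effective divisors are negative on plenty of pseudo-effective curve classes), so even when negative rays exist you cannot conclude that $x$ has a nonzero component along one of them. The missing idea is the paper's perturbation of the boundary: since $(X,\Delta)$ is klt and $D_0$ is effective, $(X,\Delta+\epsilon D_0)$ is klt for small $\epsilon>0$, and now $(K_X+\Delta+\epsilon D_0)\cdot x<0$ strictly; the cone theorem applied to this \emph{new} pair is what supplies a contractible, isolated $(K_X+\Delta+\epsilon D_0)$-negative extremal ray, and it can be found inside the relevant face of $\NE(X)$.

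Your mechanism for pinning $\lambda$ to the integer multiplier on a single ray also has a gap. Passing to the quotient of $\N_1(X)$ by the span of the $(K_X+\Delta)$-nonnegative part gives nothing, because that span is in general all of $\N_1(X)$ (in the Calabi--Yau case the nonnegative part is the whole cone), so the image of $x$ there is $0$; likewise the cone-theorem decomposition of $x$ is far from unique, so ``comparing coefficients'' is not available. The paper instead takes the unique minimal extremal face $F$ of $\NE(X)$ containing $x$ (Lemma \ref{lem-minface}), which is $f_*$-invariant by uniqueness and satisfies $x\in F^\circ$, locates the contractible extremal ray of the perturbed pair inside $F$, and then applies the dynamical Lemma \ref{lem-fix-ray}: for a cone with an interior eigenvector of eigenvalue $\lambda$, an isolated extremal ray is fixed by a suitable power of $f_*$ and the action on it is multiplication by the same $\lambda$. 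That is what yields $f_*C\equiv_w\lambda C$ after iteration; the integrality of $\lambda$ then follows as you say (it is a positive rational by pairing with an ample Cartier divisor, and an algebraic integer). Without the boundary-perturbation trick and the interior-eigenvector/isolated-ray lemma (or substitutes for them), your argument does not go through.
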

\begin{proof}
By Lemma \ref{lem-minface}, there exists a unique minimal closed extremal face $F$ of $\NE(X)$ containing $x$. By the uniqueness, we have $f_*(F)=F$.
Suppose $D\cdot x<0$ for some effective Cartier divisor $D$.
Since $(X,\Delta)$ is klt, $(X,\Delta+\epsilon D)$ is klt for some $\epsilon>0$ (cf.~\cite[Corollary 2.35]{KM}).
Note that $(K_X+\Delta+\epsilon D)\cdot x<0$.
By the cone theorem (cf.~\cite[Theorem 3.7]{KM}), $F$ contains some $(K_X+\Delta+\epsilon D)$-negative contractible extremal ray $R_C$ which is isolated.
Note that $x\in F^\circ$.
By Lemma \ref{lem-fix-ray}, after replacing $f$ by a positive power, $f_*C\equiv_w \lambda C$.
Let $A$ be any ample Cartier divisor.
We have that $A\cdot f_*C=\lambda A\cdot C$ is a positive integer and hence $\lambda$ is a rational number.
Since $\lambda$ is also an algebraic integer, $\lambda$ is an integer.
\end{proof}

%
%

\begin{lemma}\label{lem-cone-j2} Let $f:V\to V$ be an invertible linear map of a positive dimensional real normed vector space $V$ such that $f(C)=C$ for a closed convex cone $C\subseteq V$ which contains no line.
Suppose $f(x)=x$ for some nonzero $x\in V$ and $f(y)-y=ax$ for some $y\in C$ and real number $a$.
Then $a=0$.
\end{lemma}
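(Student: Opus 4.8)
The idea is to iterate the relation $f(y)=y+ax$ in both directions and exploit that $C$ is a \emph{closed} cone with no line. First I would note that since $f$ is invertible with $f(C)=C$, we also have $f^{-1}(C)=C$, so $f^n(y)\in C$ for every $n\in\mathbb{Z}$. Using $f(x)=x$ (hence $f^{-1}(x)=x$), an easy induction gives
$$
f^n(y)=y+n\,a\,x\qquad\text{for all }n\in\mathbb{Z},
$$
the negative case following from $f^{-1}(y)=y-ax$, which is obtained by applying $f^{-1}$ to $f(y)=y+ax$ and using $f^{-1}(x)=x$.

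Next I would use that $C$ is a convex cone closed under multiplication by positive scalars: for $n>0$, $\frac1n f^n(y)=\frac1n y+ax\in C$, and letting $n\to+\infty$, closedness of $C$ yields $ax\in C$. Applying the same argument to $f^{-n}(y)=y-nax$ gives $-ax\in C$. Since $C$ is a convex cone containing both $ax$ and $-ax$, it contains all nonnegative multiples of each, hence the whole line $\mathbb{R}\,(ax)$. If $a\neq0$, then $ax\neq0$ (as $x\neq0$), so $C$ contains a line, contradicting the hypothesis. Therefore $a=0$.

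\textbf{Main obstacle.} There is essentially no hard step here; the only points requiring a little care are verifying $f^{-1}(C)=C$ from invertibility and $f(C)=C$, and checking that the boundedness/limit argument is legitimate, i.e.\ that $\frac1n f^{\pm n}(y)\to \pm ax$ in the norm topology, which is immediate since $\frac1n y\to 0$. Everything then follows from the closedness of $C$ and the no-line assumption.
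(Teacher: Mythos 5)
Your proof is correct and follows essentially the same route as the paper: iterate the relation to get $f^n(y)=y+nax$ for all $n\in\mathbb{Z}$, push the rescaled iterates into the closed cone, and contradict the no-line hypothesis. The only cosmetic difference is that you normalize by $\tfrac1n$ while the paper normalizes by $|f^n(y)|$, which changes nothing of substance.
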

\begin{proof}
Suppose $a>0$.
Then $\lim\limits_{n\to +\infty} \frac{f^n(y)}{|f^n(y)|}= \frac{x}{|x|}\in C$ and
$\lim\limits_{n\to -\infty} \frac{f^n(y)}{|f^n(y)|}= \frac{-x}{|x|}\in C$.
Since $x\neq 0$, this contradicts that $C$ contains no line.
The case $a<0$ is similar.
\end{proof}

Now we prove Theorem \ref{main-thm-qa-a}.
\begin{theorem}\label{thm-qa-a}
Let $f:X\to X$ be a surjective endomorphism of a normal projective variety $X$ such that $B=f^*D-D$ is big for some Cartier divisor $D$.
Suppose $(X,\Delta)$ is klt and $K_X+\Delta\equiv 0$ for some effective $\Q$-divisor $\Delta$.
Then replacing $f$ by a positive power, there is an $f$-equivariant sequence of contractions of extremal rays
$$X=X_1\to \cdots \to X_i \to \cdots \to X_r$$
(i.e. $f=f_1$ descends to $f_i$ on each $X_i$), such that we have:
\begin{itemize}
\item[(1)] For each $i\le r$, $(X_i, \Delta_i)$ is klt and $K_{X_i}+\Delta_i\sim_{\mathbb{Q}} 0$ for some effective $\mathbb{Q}$-divisor $\Delta_i$.
\item[(2)] For each $i<r$, the $i$-th contraction $\pi_i:X_i\to X_{i+1}$  of the extremal ray $R_{C_i}$ is birational and $(f_i)_*C_i\equiv_w C_i$.
\item[(3)] For each $i\le r$, there exists big Cartier divisor $B_i$ of $X_i$ such that $B_1=B$ and $B_i=\pi_i^*B_{i+1}$ for $i<r$.
\item[(4)] For each $i\le r$, if $(f_i)_*x_i\equiv_w x_i$ for some $x_i\in \NE(X_i)$, then $B_i\cdot x_i=0$.
\item[(5)] $f_r$ is amplified. For each $i\le r$, $f_i$ is of positive entropy and quasi-amplified and $\Per(f_i)\cap U_i$ is countable and Zariski dense in $X_i$ for some open subset $U_i\subseteq X_i$. 
\item[(6)] Suppose the base field $k$ is uncountable. For each $i\le r$, $f_i$ has a Zariski dense orbit.
\end{itemize}
\end{theorem}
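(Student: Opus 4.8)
The plan is to run an $f$-equivariant, MMP-type process: as long as the current endomorphism is not amplified I extract a suitable contractible extremal ray, contract it, and iterate; the process stops because the Picard number strictly drops, and I arrange that it can stop only when the descended endomorphism is amplified. Then (5) and (6) follow formally from ``$f_r$ amplified'' using the composite birational morphisms $\Pi\colon X_i\to X_r$. Each application of Theorem \ref{thm-cont} below forces me to replace $f$ by a positive power; since there are at most $\rho(X)-1$ steps, these replacements compose to one power of $f$, and property (4) is re-established for that power along the same induction (its base case $B_1\cdot x=(f_1^*D-D)\cdot x=D\cdot((f_1)_*x-x)=0$ survives passage to a power via the telescoping identity of Proposition \ref{prop-ext}(5)).

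So suppose inductively that $f_i\colon X_i\to X_i$ is built with $X_i$ normal, $(X_i,\Delta_i)$ klt, $K_{X_i}+\Delta_i\sim_{\mathbb{Q}}0$, $B_i$ big Cartier, and (4) in force. If $f_i$ is amplified, set $r=i$ and stop. Otherwise Proposition \ref{prop-a-cri} gives $0\neq Z_i\in\NE(X_i)$ with $(f_i)_*Z_i\equiv_w Z_i$; then (4) yields $B_i\cdot Z_i=0$, and writing $mB_i\sim A'+E'$ with $A'$ ample Cartier, $E'$ effective Cartier (Kodaira's lemma, $m\gg0$), we get $E'\cdot Z_i=-A'\cdot Z_i<0$. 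Hence alternative (1) of Theorem \ref{thm-cont} fails for the effective Cartier divisor $E'$, and since $(K_{X_i}+\Delta_i)\cdot Z_i=0\le0$, that theorem gives --- after replacing $f$ by a positive power --- a rational curve $C_i$ with $(f_i)_*C_i\equiv_w C_i$ such that $R_{C_i}$ is a contractible extremal ray of $\NE(X_i)$, produced (see the proof of Theorem \ref{thm-cont}) as a $(K_{X_i}+\Delta_i+\epsilon E')$-negative ray for the klt pair $(X_i,\Delta_i+\epsilon E')$, $0<\epsilon\ll1$. Let $\pi_i\colon X_i\to X_{i+1}$ be the associated contraction. It is $f_i$-equivariant because $(f_i)_*$ is invertible on $\N_1(X_i)$ and fixes $R_{C_i}$, so $f_i$ maps fibres of $\pi_i$ to fibres and descends to $f_{i+1}$; and it is birational, since $B_i\cdot C_i=0$ (as $R_{C_i}$ lies in the $(f_i)_*$-fixed part of $\NE(X_i)$, use (4)) gives $B_i\equiv\pi_i^*B'$ by Definition \ref{def:extrem_ray}(3), and a big divisor cannot be numerically a pullback from a strictly lower-dimensional variety. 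Now the contraction theorem for $(X_i,\Delta_i+\epsilon E')$ (cf.~\cite[Theorem 3.7]{KM}) gives $B_i=\pi_i^*B_{i+1}$ with $B_{i+1}$ Cartier (and big, $\pi_i$ being birational) and $K_{X_i}+\Delta_i=\pi_i^*(K_{X_{i+1}}+\Delta_{i+1})$ with $\Delta_{i+1}=(\pi_i)_*\Delta_i$; crepancy then makes $(X_{i+1},\Delta_{i+1})$ klt with $K_{X_{i+1}}+\Delta_{i+1}\equiv0$, hence $\sim_{\mathbb{Q}}0$ by \cite[Chapter V, Corollary 4.9]{ZDA}. This yields (1), (2), (3).

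The crux is that (4) persists under the contraction, which I prove with Lemma \ref{lem-cone-j2}. Given $Z_{i+1}\in\NE(X_{i+1})$ with $(f_{i+1})_*Z_{i+1}\equiv_w Z_{i+1}$, lift it to some $\widetilde Z\in\NE(X_i)$ with $(\pi_i)_*\widetilde Z=Z_{i+1}$. Then $(f_i)_*\widetilde Z\in\NE(X_i)$ also maps to $Z_{i+1}$, so $(f_i)_*\widetilde Z-\widetilde Z$ lies in $\ker\big((\pi_i)_*\colon\N_1(X_i)\to\N_1(X_{i+1})\big)=\mathbb{R}[C_i]$, say it equals $t[C_i]$; since $(f_i)_*[C_i]=[C_i]$ and $\NE(X_i)$ contains no line, Lemma \ref{lem-cone-j2} forces $t=0$, that is, $(f_i)_*\widetilde Z\equiv_w\widetilde Z$. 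Then (4) at level $i$ gives $B_{i+1}\cdot Z_{i+1}=(\pi_i^*B_{i+1})\cdot\widetilde Z=B_i\cdot\widetilde Z=0$. Since each $\pi_i$ drops the Picard number by one, the process terminates after $r-1\le\rho(X)-1$ steps, and as every non-amplified stage admits a further step, $f_r$ is amplified.

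It remains to read off (5) and (6). With $\Pi\colon X_i\to X_r$ the composite birational morphism: since $f_r$ is amplified it is of positive entropy (Lemma \ref{lem-amp-pos}), hence so is $f_i$ by Lemma \ref{lem-lift-des}(2); since $f_r$ is amplified it is quasi-amplified, hence so is $f_i$ by Lemma \ref{lem-lift-des}(1); and $f_r$ is PCD (Theorem \ref{thm-amp-pcd}), so by Proposition \ref{prop-ext}(3) $\Fix(f_r^n)$ is finite for all $n$, whence $\Per(f_r)$ is countable and Zariski dense in $X_r$. Choosing $U_r\subseteq X_r$ open dense over which $\Pi$ is an isomorphism and $U_i:=\Pi^{-1}(U_r)$, one checks directly that $\Per(f_i)\cap U_i$ is carried bijectively onto $\Per(f_r)\cap U_r$, hence is countable and Zariski dense in $X_i$. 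Finally, if $k$ is uncountable then $f_r$ has a Zariski dense orbit by Theorem \ref{thm-amp-orbit}; a dominant $\pi\colon X_i\dasharrow\mathbb{P}^1$ with $\pi\circ f_i=\pi$ would yield, via $\Pi^{-1}$, the analogous map on $X_r$, so by Theorem \ref{thm-AC} $f_i$ too has a Zariski dense orbit. The main obstacle is setting up the induction so that property (4) genuinely persists at each stage --- this is exactly what excludes alternative (1) of Theorem \ref{thm-cont}, rules out fibre-type contractions, and forces $f_r$ to be amplified --- together with the bookkeeping that merges the repeated ``replace $f$ by a power'' into one power while keeping (4) valid.
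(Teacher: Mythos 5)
Your proposal is correct and follows essentially the same route as the paper: produce a fixed class via Proposition \ref{prop-a-cri}, turn it into a contractible extremal ray with Theorem \ref{thm-cont}, descend $B_i$, $\Delta_i$ and property (4) through the birational contraction using the cone theorem and Lemma \ref{lem-cone-j2}, terminate by the drop in Picard number, and deduce (5)--(6) from Lemmas \ref{lem-amp-pos}, \ref{lem-lift-des} and Theorems \ref{thm-amp-pcd}, \ref{thm-amp-orbit}. The only differences are presentational: you run a direct induction instead of the paper's maximality argument, and you make explicit two points the paper leaves implicit (excluding alternative (1) of Theorem \ref{thm-cont} via Kodaira's lemma, and re-establishing (4) with the telescoped divisor $(f^m)^*D-D$ after replacing $f$ by a power), plus a harmless detour through Theorem \ref{thm-AC} for (6).
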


\begin{proof}
If $\dim(X)=1$, then $B$ is ample and the theorem is trivial by taking $r=1$.
Suppose $\dim(X)>1$.
Let $r$ be the maximal integer such that we have an $f$-equivariant sequence of contractions of extremal rays
$$X=X_1\to \cdots \to X_i \to \cdots \to X_r,$$ 
satisfying (1) - (4).
Note that for each step, $\rho(X_i)=\rho(X_{i+1})+1$.
Then $r\le \rho(X)$.
When $r=1$, (2) and (3) are automatically true, (1) is satisfied by taking $\Delta_1=\Delta$ and applying \cite[Chapter V, Corollary 4.9]{ZDA}, and (4) is satisfied by the projection formula.

Suppose $f_r$ is amplified. Then we stop and (5) follows from Lemma \ref{lem-lift-des} and Theorem \ref{thm-amp-pcd}, and (6) follows from Theorem \ref{thm-amp-orbit}.

Suppose $f_r$ is not amplified.
There exists some $x_r\in \NE(X_r)\backslash\{0\}$ such that $(f_r)_*x_r\equiv_w x_r$ by Proposition \ref{prop-a-cri}.
Since (4) holds true for $i\le r$, $B_r\cdot x_r=0$.
By Theorem \ref{thm-cont}, after replacing $f$ by some positive power, we may assume $x_r=C_r$ for some contractible rational curve $C_r$.
Let $\pi_r:X_r\to X_{r+1}$ be the induced $f_r$-equivariant contraction.
By the cone theorem (cf. \cite[Theorem 3.7(4)]{KM}), $B_r=\pi_r^*B_{r+1}$ for some Cartier divisor $B_{r+1}$ of $X_{r+1}$.
Since $B_r$ is big, $\pi_r$ is birational and $B_{r+1}$ is big too.
Let $\Delta_{r+1}=(\pi_r)_*\Delta_r$.
Then $K_{X_{r+1}}+\Delta_{r+1}=(\pi_r)_*(K_{X_r}+\Delta_r)\sim_{\mathbb{Q}}0$
and hence $K_{X_r}+\Delta_r=(\pi_r)^*(K_{X_{r+1}}+\Delta_{r+1})$.
So $(X_{r+1},\Delta_{r+1})$ is klt.
Suppose $(f_{r+1})_*x_{r+1}\equiv_w x_{r+1}$ for some $x_{r+1}\in \NE(X_{r+1})$.
Since $\pi_r$ is surjective, there exists some $y_r\in \NE(X_r)$ such that $(\pi_r)_*y_r\equiv_w x_{r+1}$.
Note that $(\pi_r)_*((f_r)_*y_r-y_r)\equiv_w (f_{r+1})_*x_{r+1}-x_{r+1}\equiv_w 0$.
Then $(f_r)_*y_r-y_r\equiv_w mC_r$ for some real number $m$.
By Lemma \ref{lem-cone-j2}, $m=0$.
By the projection formula, $B_{r+1}\cdot x_{r+1}=B_r\cdot y_r=0$ since (4) holds for $i\le r$.
Now we have a longer sequence and we have checked that (1) - (4) hold for $r+1$.
However, this contradicts the maximality of $r$.
\end{proof}

\section{The Hyperk\"ahler case and proof of Theorem \ref{main-thm-hk}}
In this section, we always work over $\mathbb{C}$.
Let $X$ be a projective Hyperk\"ahler manifold.
There is a Beauville-Bogomolov-Fujiki's form $q$ on $\N^1(X)$ with signature $(1, 0, \rho(X)-1)$ where $\rho(X)$ is the Picard number of $X$ (cf.~\cite{Huy99}).
\begin{lemma}\label{lem-hk-sum-big}
Let $X$ be a projective Hyperk\"ahler manifold.
Let $D_1$ and $D_2$ be nef $\R$-Cartier divisors which are linearly independent in $\N^1(X)$.
Then $q(D_1,D_2)>0$ and $D_1+D_2$ is nef and big.
\end{lemma}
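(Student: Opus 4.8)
The plan is to exploit the Lorentzian nature of the Beauville--Bogomolov--Fujiki form $q$ on $\N^1(X)$ together with the Fujiki relation. Write $\dim X=2n$ and recall that there is a positive constant $c_X$ such that $E^{2n}=c_X\,q(E)^n$ for every $\mathbb{R}$-Cartier divisor $E$ (here $q(E):=q(E,E)$), and that the \emph{positive cone} $\mathcal{C}\subseteq\N^1(X)$ — the connected component of $\{x:q(x)>0\}$ containing the ample classes — has closure $\overline{\mathcal{C}}$ equal to the closed ``forward light cone'' $\{x:q(x)\ge 0,\ q(x,A)\ge 0\}$ for a fixed ample class $A$, which is a closed convex cone containing no line. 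First I would note that every nef $\mathbb{R}$-Cartier divisor $D$ lies in $\overline{\mathcal{C}}$, being a limit of ample classes, each of which lies in $\mathcal{C}$; in particular $q(D_1)\ge 0$ and $q(D_2)\ge 0$.

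The key step is the reverse Cauchy--Schwarz (``light cone'') inequality for $q$. Since $q$ has signature $(1,0,\rho(X)-1)$, one may choose linear coordinates in which $q(x)=x_0^2-|x'|^2$ and $\overline{\mathcal{C}}=\{x:x_0\ge |x'|\}$. For $x,y\in\overline{\mathcal{C}}$ this gives $q(x,y)=x_0y_0-x'\cdot y'\ge x_0y_0-|x'||y'|\ge 0$, where the last inequality uses $x_0\ge|x'|\ge 0$ and $y_0\ge|y'|\ge 0$; moreover equality throughout forces $x_0=|x'|$, $y_0=|y'|$ and $x',y'$ non-negatively proportional, so that $x$ and $y$ are linearly dependent (both lying on $\partial\overline{\mathcal{C}}$). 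Applying this with $x=D_1$, $y=D_2$ and using that $D_1,D_2$ are linearly independent, we conclude $q(D_1,D_2)>0$.

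Finally, $D_1+D_2$ is nef as a sum of nef divisors, and $q(D_1+D_2)=q(D_1)+2q(D_1,D_2)+q(D_2)\ge 2q(D_1,D_2)>0$. By the Fujiki relation $(D_1+D_2)^{2n}=c_X\,q(D_1+D_2)^n>0$, and a nef $\mathbb{R}$-Cartier divisor with positive top self-intersection number is big (indeed $\vol(D_1+D_2)=(D_1+D_2)^{2n}>0$). Hence $D_1+D_2$ is nef and big, as desired. The only point that needs genuine care is the light-cone inequality and, in particular, the characterization of its equality case forcing linear dependence; the remaining ingredients — the Fujiki relation, the inclusion of nef classes in $\overline{\mathcal{C}}$, and ``nef plus positive top self-intersection implies big'' — are standard.
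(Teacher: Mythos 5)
Your proposal is correct and takes essentially the same route as the paper: $q(D_i)\ge 0$ for nef classes, the signature (light-cone) argument forcing $q(D_1,D_2)>0$ for linearly independent nef classes, and then $q(D_1+D_2)>0$ yields bigness. The only differences are cosmetic: you spell out the reverse Cauchy--Schwarz inequality and its equality case, which the paper compresses to ``by observing the signature,'' and you deduce bigness from the Fujiki relation plus the standard fact that a nef class with positive top self-intersection is big, whereas the paper cites the claim in \cite[Proposition 26.13]{GHJ}.
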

\begin{proof}
Note that $q(D_1):=q(D_1,D_1)\ge 0$ and $q(D_2)\ge 0$.
Since $D_1$ and $D_2$ are linearly independent, $q(D_1, D_2)>0$ by observing the signature.
Then $q(D_1+D_2)=q(D_1)+q(D_2)+2q(D_1,D_2)>0$.
By the claim in \cite[Proposition 26.13]{GHJ}, $D_1+D_2$ is big.
\end{proof}

\begin{lemma}\label{lem-HK-finite}
Let $f:X\to X$ be an automorphism of a  projective Hyperk\"ahler manifold $X$.
Suppose $f^*D\equiv D$ for some $\R$-Cartier divisor $D$ such that $D$ is big or $q(D)>0$.
Then $f$ has finite order.
\end{lemma}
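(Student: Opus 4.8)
The plan is to show that, after replacing $f$ by a positive power, $f^{*}$ fixes an ample class on $X$, and then to deduce that $f$ has finite order from the uniqueness of the Ricci-flat Kähler metric. First I would reduce to the case $q(D)>0$. If $D$ is only assumed big, I would replace it by the positive part $P$ of its divisorial Zariski decomposition $D=P+N$ (with $N$ the effective negative part): since this decomposition is canonical, $f^{*}D\equiv D$ gives $f^{*}P\equiv P$, and $P$ is again big and movable. On a projective hyperk\"ahler manifold a big movable class has positive Beauville--Bogomolov--Fujiki square: the movable cone lies in the closure of the positive cone, so $q(P)\ge 0$, and $\vol(P)$ is a positive multiple of $q(P)^{m}$ (where $\dim X=2m$), which forces $q(P)>0$. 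So from now on I may assume $q(D)>0$, and in particular $0\ne D\in\N^1(X)$.

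Next I would show that $f^{*}|_{\N^1(X)}$ has finite order. Since $q$ on $\N^1(X)$ has signature $(1,0,\rho(X)-1)$ and $q(D)>0$, there is an $f^{*}$-invariant orthogonal decomposition $\N^1(X)=\mathbb{R}D\oplus W$ with $W:=D^{\perp}$ and $-q|_{W}$ positive definite: indeed $f^{*}$ is the identity on $\mathbb{R}D$ and maps $W$ to $W$ because it preserves $q$ and fixes the class of $D$. Hence $f^{*}|_{W}$ lies in the compact group $O(W,-q|_{W})$, so every eigenvalue of $f^{*}|_{\N^1(X)}$ has modulus $1$; since $f^{*}$ preserves the lattice $\NS(X)$, these eigenvalues are algebraic integers, hence roots of unity by Kronecker's theorem; and $f^{*}|_{\N^1(X)}$ is semisimple (the identity on one summand, an orthogonal transformation on the other), so it has finite order. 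Replacing $f$ by a positive power, I may then assume $f^{*}|_{\N^1(X)}=\id$, so that $f^{*}H\equiv H$ for some ample class $H$; it is enough to prove this power of $f$ has finite order.

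Finally, from $f^{*}H\equiv H$ I would conclude as follows. Let $\omega$ be the unique Ricci-flat K\"ahler metric on $X$ with K\"ahler class $H$ (Yau's theorem). Then $f^{*}\omega$ is Ricci-flat K\"ahler with class $f^{*}H\equiv H$, so $f^{*}\omega=\omega$ by uniqueness, i.e. $f\in\operatorname{Isom}(X,\omega)$, a compact Lie group. The subgroup $\operatorname{Isom}(X,\omega)\cap\Aut(X)$ is closed in $\operatorname{Isom}(X,\omega)$, since a locally uniform limit of biholomorphisms of a compact complex manifold is a biholomorphism; it is therefore compact, and it is discrete because $\Aut(X)$ is a complex Lie group with Lie algebra $H^{0}(X,T_{X})\cong H^{0}(X,\Omega^{1}_{X})=0$. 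A compact discrete group is finite, so $f$ --- and hence the original automorphism --- has finite order.

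The routine part is the signature bookkeeping in the middle step. The points requiring care, and the ones I expect to be the main obstacle, are: (i) in the reduction, the fact that the positive part of a big class on a hyperk\"ahler manifold has positive square (needing the description of the movable cone and the Fujiki relation, e.g.\ from \cite{GHJ}); and (ii) the passage from ``$f^{*}$ of finite order on $\N^1(X)$'' to ``$f$ of finite order'', where the Ricci-flat metric is the essential input --- alternatively one could use the finiteness of $\ker(\Aut(X)\to O(H^{2}(X,\mathbb{Z})))$ together with an analysis of $f^{*}$ on the transcendental lattice, but the metric argument is cleaner.
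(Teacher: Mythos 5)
Your argument is correct and reaches the same intermediate milestone as the paper ($f^*|_{\N^1(X)}=\id$ after a power), but it gets there and leaves there by genuinely different routes, and the comparison is worth recording. The paper reduces in the opposite direction: from $q(D)>0$ it passes to $\pm D$ big (the claim in \cite[Proposition 26.13]{GHJ}, already used in Lemma \ref{lem-hk-sum-big}), and then invokes \cite[Proposition 2.9]{MZ} — fixing a big class bounds the norms of $(f^n)^*$ on any projective variety — plus Kronecker's theorem to force $f^*|_{\N^1(X)}=\id$ after iteration; the final step is quoted from \cite[Corollary 2.7]{Ogu08} (cf.~\cite[Section 9]{Huy99}). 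You instead reduce the big case to the $q>0$ case via the divisorial Zariski decomposition, and this is the heaviest and least ``citable'' part of your write-up: $f^*P\equiv P$ and $q(P)\ge 0$ are fine (Boucksom's decomposition on Hyperk\"ahler manifolds is numerical and automorphism-equivariant, and modified nef classes lie in the closed positive cone), but the statement that a \emph{big} modified nef class has $q>0$ needs more than \cite{GHJ}: either the $q$-duality of the pseudoeffective and modified nef cones (then $q(P)=0$ together with $q(P,A)>0$ for an ample $A$, forced by the signature, gives a contradiction with $P-\epsilon A$ pseudoeffective), or the volume formula $\vol=c_X\,q^m$ extended from the nef cone to the closure of the birational K\"ahler cone — true, but not a one-line appeal to the Fujiki relation. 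Your middle step (splitting $\N^1(X)=\R D\oplus D^{\perp}$, compactness of the orthogonal group of the negative definite complement, integrality, Kronecker, semisimplicity) is a clean Hyperk\"ahler-specific substitute for \cite[Proposition 2.9]{MZ}, and your final step — Yau's theorem, uniqueness of the Ricci-flat metric in the class of an invariant ample divisor, compactness of the holomorphic isometry group and $H^0(X,T_X)\cong H^0(X,\Omega^1_X)=0$ — is essentially a self-contained proof of the fact the paper cites from \cite{Ogu08}. So the trade-off is: the paper's path uses lighter, already-quoted ingredients and a general projective-variety lemma, while yours is more self-contained on the ``finite order'' end at the cost of importing nontrivial Hyperk\"ahler birational machinery in the reduction; if you keep your route, supply a precise reference (Boucksom) for the positivity of $q$ on big modified nef classes, since that is the only step where your justification as written is thinner than what the claim requires.
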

\begin{proof}
Note that $q(D)>0$ implies either $D$ or $-D$ is big.
Then we may assume $D$ is big.
Applying \cite[Proposition 2.9]{MZ} and Kronecker's theorem, we have $f^*|_{\N^1(X)}=\id$ after replacing $f$ by a positive power.
Then $f$ has finite order by \cite[Corollary 2.7]{Ogu08} (cf.~\cite[Section 9]{Huy99}). 
\end{proof}

We recall \cite[Lemma 2.8]{Ogu07} and provide a simplified proof in our situation.
We also refer to \cite{HKZ} for related results of birational automorphisms group of null entropy. 

\begin{proposition}\label{prop-hk-null}
Let $f:X\to X$ be an automorphism of a projective Hyperk\"ahler manifold $X$.
Then the following are equivalent.
\begin{itemize}
\item[(1)] $f$ is of null entropy.
\item[(2)] $f^*D\equiv D$ for some nef $\mathbb{R}$-Cartier divisor $D\not\equiv 0$.
\end{itemize}
Suppose further the order of $f$ is infinite.
Then the above are equivalent to
\begin{itemize}
\item[(3)] There is a unique (up to scalar) nef Cartier divisor $D\not\equiv 0$ such that $f^*D\sim D$.
\end{itemize}
\end{proposition}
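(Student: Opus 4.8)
The plan is to establish the cycle of implications $(1)\Rightarrow(2)\Rightarrow(1)$, and then under the extra hypothesis that $f$ has infinite order, to strengthen $(2)$ to $(3)$. For $(1)\Rightarrow(2)$: assuming $f$ is of null entropy, by Lemma \ref{lem-np-aut} $f$ is an automorphism (already given here) and all eigenvalues of $f^*|_{\N^1(X)}$ are roots of unity, so after replacing $f$ by a positive power we may assume $f^*|_{\N^1(X)}$ is unipotent. The cone $\Nef(X)$ is a closed convex cone which (by the signature $(1,0,\rho(X)-1)$ of $q$, since $X$ is projective) spans $\N^1(X)$ and contains no line. A unipotent invertible linear map preserving such a cone must fix a nonzero ray in the cone: indeed, taking an ample class $A$ in the interior and letting $D:=\lim_{n\to\infty} (f^n)^*A/\lVert (f^n)^*A\rVert$ (which exists because the iterates grow only polynomially, by unipotence, so along a subsequence the normalized iterates converge; one can also argue directly via the Perron–Frobenius theorem as in Lemma \ref{lem-cone-lim-pos} with $r=1$), we get $D\in\Nef(X)\setminus\{0\}$ with $f^*D\equiv D$. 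Then for the original $f$ (before passing to the power), replace $D$ by $\sum_{j=0}^{m-1}(f^j)^*D$ where $f^m$ was the power used; this is still nef, nonzero, and fixed by $f^*$.

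For $(2)\Rightarrow(1)$: suppose $f^*D\equiv D$ for some nef $D\not\equiv 0$, and suppose toward a contradiction that $f$ is of positive entropy, i.e. $f^*|_{\N^1(X)}$ has spectral radius $r>1$. By the Perron–Frobenius theorem applied to $f^*$ on the cone $\Nef(X)$ there is a nef class $D_+$ with $f^*D_+=rD_+$, and applying it to $f_*=(f^{-1})^*$ (also an automorphism of $X$ preserving $\Nef(X)$) there is a nef class $D_-$ with $f^*D_-=r^{-1}D_-$. The hyperbolic structure of $q$ forces $q(D_+)=q(D_-)=0$ and $q(D_+,D_-)>0$, while $q(D)\ge 0$; the invariance $f^*D\equiv D$ together with $f^*$-eigenvalue considerations (comparing $q(D,D_+)$, which must be scaled by $r$ and by $1$ simultaneously, hence vanish, and similarly $q(D,D_-)=0$) shows $D$ is $q$-orthogonal to both $D_\pm$, hence lies in the negative-definite part, forcing $q(D)<0$ unless $D\equiv 0$ — contradiction. (This is the standard rank-one / hyperbolic-lattice argument; Lemma \ref{lem-HK-finite} also feeds in, since if $q(D)>0$ then $f$ would have finite order, in particular null entropy, so the only subtle case is $q(D)=0$, handled by the orthogonality computation above.)

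For the final part, assume $f$ has infinite order and $f$ is of null entropy; we want uniqueness up to scalar of a nef $D\not\equiv 0$ with $f^*D\sim D$, and existence of one satisfying the \emph{linear} (not just numerical) equivalence. Uniqueness: if $D_1,D_2$ are nef, nonzero, and $f^*$-fixed and linearly independent in $\N^1(X)$, then by Lemma \ref{lem-hk-sum-big} the class $D_1+D_2$ is nef and big and $f^*$-fixed, so by Lemma \ref{lem-HK-finite} $f$ has finite order — contradiction; hence the $f^*$-fixed ray in $\Nef(X)$ is unique, and any two such divisors are proportional in $\N^1(X)$. Existence with $\sim$: start from the numerically fixed nef $D$ from part $(2)$; then $f^*D-D\in\Pic^0(X)=0$ since a Hyperk\"ahler manifold has $H^1(X,\mathcal O_X)=0$, so in fact $f^*D\sim D$ already holds for any Cartier divisor representing the class — more precisely, numerical and linear equivalence coincide on $\Pic(X)$ here, so $\Pic(X)\hookrightarrow\N^1(X)$ and the numerically fixed class is represented by a genuinely $f^*$-fixed Cartier divisor (replace $D$ by a multiple to clear denominators if the class was only a $\Q$-class). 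The main obstacle I anticipate is the $(2)\Rightarrow(1)$ direction in the borderline case $q(D)=0$: one has to rule out that a null nef class is fixed by a positive-entropy automorphism, which genuinely uses the hyperbolic signature and the precise eigenvalue structure ($r$ and $1/r$ on the two null rays), not merely abstract cone positivity; the rest is bookkeeping with Lemmas \ref{lem-hk-sum-big}, \ref{lem-HK-finite} and the vanishing of $\Pic^0$.
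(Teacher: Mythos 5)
Your equivalence $(1)\Leftrightarrow(2)$ is correct: the $(1)\Rightarrow(2)$ direction is essentially the paper's (cone Perron--Frobenius with eigenvalue $1$), and your $(2)\Rightarrow(1)$, using both eigenclasses $D_+$, $D_-$ and the negative definiteness of the orthogonal complement of the hyperbolic plane they span, is valid but longer than necessary: the paper only needs the single eigenclass $D'$ with $f^*D'\equiv rD'$, since $q(D,D')=q(f^*D,f^*D')=rq(D,D')$ forces $q(D,D')=0$, directly contradicting Lemma \ref{lem-hk-sum-big}. Your uniqueness argument for (3) (two independent fixed nef classes give a big fixed class, contradicting Lemma \ref{lem-HK-finite}) is exactly the paper's.

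The genuine gap is in the existence half of $(1)\Rightarrow(3)$: statement (3) asks for a nef \emph{Cartier} divisor with $f^*D\sim D$, and you start from ``the numerically fixed nef $D$ from part (2)'' and then ``replace $D$ by a multiple to clear denominators if the class was only a $\Q$-class.'' But the class produced in your step (2) is a priori only an $\R$-class: a Perron--Frobenius eigenvector, or a (subsequential) limit of normalized iterates, need not lie on a rational ray, and nothing in your argument shows it does, so there are no denominators to clear. This is precisely the point where the paper's proof does extra work: after passing to a power $f^n$ with $(f^n)^*|_{\N^1(X)}$ unipotent, the limit of the rays $R_{(f^{in})^*A}$ is generated by an \emph{integral} class (concretely, the leading term $((f^n)^*-\id)^{k}A$ of the unipotent iteration, which lies in $\NS(X)$ since $A$ is Cartier), so one gets a nef Cartier divisor $D$ with $(f^n)^*D\equiv D$; then $D':=\sum_{i=0}^{n-1}(f^i)^*D$ is nef, Cartier and $f^*$-fixed numerically, and since $q(X)=h^1(X,\mathcal{O}_X)=0$ one passes from $\equiv$ to $\sim$ after multiplying by a suitable integer $m$ (your appeal to $\Pic^0(X)=0$ is the same idea; strictly one should either kill possible torsion by the factor $m$ as the paper does, or invoke simple connectedness to see $\NS(X)$ is torsion-free). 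Your own unipotent-limit computation can be upgraded to supply this integrality, but as written the rationality of the fixed nef class — hence the existence of a Cartier representative — is asserted rather than proved.
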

\begin{proof}
(1) implies (2) by the Perron-Frobenius theorem.
Suppose $f$ is of positive entropy and (2) holds.
Then $f^*D'\equiv rD'$ for some nonzero $D'\in \Nef(X)$ where $r>1$ is the spectral radius of $f^*|_{\N^1(X)}$.
Note that $q(D,D')=q(f^*D, f^*D')=rq(D,D')$.
Then $q(D,D')=0$.
However, $D$ and $D'$ are linearly independent in $\N^1(X)$.
By Lemma \ref{lem-hk-sum-big}, $q(D,D')>0$, a contradiction.
So (2) implies (1).

Suppose now that $f$ has infinite order.
Clearly, (3) implies (2).
Suppose $f^*D\equiv D$ and $f^*D'\equiv D'$ for two linearly independent $D, D'\in \Nef(X)$.
by Lemma \ref{lem-hk-sum-big}, $f^*(D+D')\equiv D+D'$ and $D+D'$ is big, a contradiction by Lemma \ref{lem-HK-finite}.
Suppose all the eigenvalues of $(f^n)^*|_{\N^1(X)}$ are $1$ for some $n>0$.
Let $A$ be an ample Cartier divisor.
Then $\lim\limits_{i\to +\infty} R_{(f^{in})^*A}=R_D$ for some nef Cartier divisor $D$ and $(f^n)^*D\equiv D$.
Let $D'=\sum\limits_{i=0}^{n-1} (f^i)^*D$.
Then $D'$ is nef and Cartier and $f^*D'\equiv D'$.
Since $q(X)=0$, $f^*D'\sim D'$ after replacing $D'$ by $mD'$ for some integer $m>0$.
So (1) implies (3).
\end{proof}

\begin{proof}[Proof of Theorem \ref{main-thm-hk}] 
(1) and (2) are equivalent by Proposition \ref{prop-hk-null}.
(3) implies (4) by Theorem \ref{thm-qa-a}. (4) implies (1) by Lemmas \ref{lem-amp-pos} and \ref{lem-lift-des}.

Suppose $f$ is of positive entropy. By the Perron-Frobenius theorem, $f^*D_1\equiv aD_1$ for some nef $\mathbb{R}$-Cartier divisor $D_1$ and $a>1$, and $f^*D_2\equiv bD_2$ for some $\mathbb{R}$-Cartier divisor $D_2$ and $b<1$ (Indeed $ab=1$).
Note that $D_1$ and $D_2$ are linearly independent in $\N^1(X)$.
Let $D=D_1-D_2$.
Then $f^*D-D=(a-1)D_1+(1-b)D_2$ is nef and big by Lemma \ref{lem-hk-sum-big}.
In particular, (1) implies (3).

The last assertion follows from Lemma \ref{lem-pcd-pos}.
\end{proof}

\begin{remark}
If Question \ref{que-pos} has a positive answer, then the above equivalent conditions are also equivalent to that ``$f^n$ is birationally equivalent to some PCD automorphism for some $n>0$''.
\end{remark}

For the K3 surfaces, the following result is well-known to experts. For the convenience of the readers, we provide a proof here.
\begin{corollary}\label{cor-k3}
Let $f:X\to X$ be an automorphism of a projective K3 surface $X$.
Then the following are equivalent.
\begin{itemize}
\item[(1)] $f$ is of positive entropy.
\item[(2)] $\Per(f)\cap U$ is countable and Zariski dense for some open dense subset $U$ of $X$.
\item[(3)] $f$ has a Zariski dense orbit.
\end{itemize}
\end{corollary}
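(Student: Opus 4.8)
The plan is to deduce Corollary \ref{cor-k3} from Theorem \ref{main-thm-hk} together with the results already assembled in the excerpt, treating a projective K3 surface as the simplest instance of a projective Hyperk\"ahler manifold. The first observation is that every surjective endomorphism of a K3 surface is an automorphism (a K3 surface has ample canonical bundle trivial and no nontrivial self-covers since $e(X)=24\ne e(X)\cdot\deg f$ unless $\deg f=1$), so Theorem \ref{main-thm-hk} applies verbatim with $X'$ itself a K3 surface after resolving the birational map.

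For the implication $(1)\Rightarrow(2)$, I would argue as follows. By Theorem \ref{main-thm-hk}, positive entropy of $f$ implies that (after replacing $f$ by a power, which changes neither $\Per(f)$ nor the validity of the statement by Proposition \ref{prop-ext}(5)) $f$ is birationally equivalent to an amplified automorphism $f':X'\to X'$ via some birational map $\pi:X\dasharrow X'$. Since $f'$ is amplified, it is PCD by Theorem \ref{thm-amp-pcd}, so $\Per(f')$ is countable and Zariski dense and each $\Fix((f')^i)$ is finite (Proposition \ref{prop-ext}(3)). Let $U\subseteq X$ be the open dense locus where $\pi$ is an isomorphism onto its image $V\subseteq X'$; since $\pi$ conjugates $f$ to $f'$, the correspondence $x\mapsto\pi(x)$ identifies $\Per(f)\cap U\cap f^{-1}(U)\cap\cdots$ with a subset of $\Per(f')\cap V$. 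Shrinking $U$ to an $f$-stable open set $U_0:=\bigcap_{n\in\mathbb{Z}}f^n(U)$ (still dense, as the complement is a countable union of proper closed sets and $X$ is irreducible over an algebraically closed field; or more cleanly one works with the $f$-periodic points lying in the open locus, which is what Theorem \ref{thm-qa-a}(5) already packages), we get that $\Per(f)\cap U_0$ is in bijection with $\Per(f')\cap(V\cap\pi(U_0))$, hence countable, and its Zariski closure maps dominantly onto $X'$, hence $\Per(f)\cap U_0$ is Zariski dense in $X$. This is exactly statement (2).

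For $(2)\Rightarrow(3)$: if $\Per(f)\cap U$ is Zariski dense, then in particular $\Per(f)$ is infinite, so $f$ cannot be of null entropy — indeed by Lemma \ref{lem-np-aut} a null-entropy automorphism has finite order after a power, so $\Fix(f^n)=X$ for some $n$, forcing $\Per(f)\cap U$ to be uncountable (the base field being algebraically closed, hence infinite), contradicting countability. So (2) implies $f$ is of positive entropy, i.e. (1). Then I invoke Theorem \ref{thm-amp-orbit}: after base change to an uncountable algebraically closed field $K$ (permissible by Proposition \ref{prop-ext}(2), which preserves positive entropy, and noting that a Zariski dense orbit of $f_K$ descends to one of $f$ — here I would need the countable-field case, but for K3 surfaces one can instead argue directly) the amplified automorphism $f'_K$ birationally equivalent to $f^n$ has a Zariski dense orbit, and transporting the orbit back through $\pi^{-1}$ and noting that a $\pi$-generic point's forward orbit avoids the indeterminacy locus gives a Zariski dense orbit for $f$.

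Finally $(3)\Rightarrow(1)$: a Zariski dense orbit forces $\Per(f)$ to be infinite is not quite the right move; instead, if $f$ had null entropy then (after a power) $f^*|_{\N^1(X)}=\id$ and by \cite[Corollary 2.7]{Ogu08} $f$ has finite order, whence every orbit is finite, contradicting the existence of a Zariski dense orbit on a positive-dimensional variety. So (3) implies (1), closing the cycle. The main obstacle I anticipate is the descent-of-a-dense-orbit issue in $(1)\Rightarrow(3)$ over a countable base field, where Theorem \ref{thm-AC} is not available; I would circumvent it by noting that for K3 surfaces the existence of a Zariski dense orbit under a positive-entropy automorphism can be established unconditionally — either because $\Per(f)\cap U$ being dense already exhibits infinitely many periodic points whose union of orbits cannot be swept out by a single nonconstant $f$-invariant rational map to $\mathbb{P}^1$, or by a direct application of the fact that a positive-entropy automorphism of a surface has no nonconstant invariant rational function (its invariant fibration would force a null-entropy action on a curve). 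This direct argument is the one step that genuinely needs the surface hypothesis rather than just Theorem \ref{main-thm-hk}.
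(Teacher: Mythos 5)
Your implications $(2)\Rightarrow(1)$ and $(3)\Rightarrow(1)$ both rest on the claim that a null-entropy automorphism of a K3 surface has finite order after replacing it by a power (you attribute this to Lemma \ref{lem-np-aut}, and in the second instance you claim $f^*|_{\N^1(X)}=\id$ after a power and then invoke \cite[Corollary 2.7]{Ogu08}). This is false, and it is not what those statements say. Lemma \ref{lem-np-aut} only gives that the eigenvalues of $f^*|_{\N^1(X)}$ are roots of unity; after a power the action can still be a nontrivial unipotent (parabolic) map, and Lemma \ref{lem-HK-finite} needs the extra hypothesis of an $f^*$-invariant big class (or $q(D)>0$) precisely to rule this out via \cite[Proposition 2.9]{MZ}. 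Concretely, a translation by an infinite-order section on an elliptically fibered K3 surface is a null-entropy automorphism of infinite order whose action on $\N^1(X)$ is unipotent but not the identity. Handling exactly this parabolic case is the substance of the paper's proof of $(2)\Rightarrow(1)$: by Proposition \ref{prop-hk-null} there is an invariant nef class $D\not\equiv 0$; if $D^2>0$ one concludes finite order by Lemma \ref{lem-HK-finite} as you do, but if $D^2=0$ one gets an $f$-equivariant elliptic fibration $X\to\mathbb{P}^1$, uses the density of periodic points to force the induced action on $\mathbb{P}^1$ to have finite order, and then uses that an automorphism of an elliptic curve fixing a (periodic) point is a group automorphism of finite order to make $\Per(f)\cap U$ uncountable, contradicting countability. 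Your argument skips this case entirely, so the cycle of implications does not close.

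Two further, smaller points. In $(1)\Rightarrow(2)$ the set $U_0=\bigcap_{n\in\mathbb{Z}}f^n(U)$ is neither open nor obviously dense, so it cannot serve as the open set required in (2); your parenthetical fallback is the right move, and in fact the paper proves $(1)\Rightarrow(2)$ exactly that way, by combining Theorem \ref{main-thm-hk} (positive entropy $\Rightarrow$ quasi-amplified) with Theorem \ref{thm-qa-a}(5) applied to $X$ itself. For $(1)\Leftrightarrow(3)$ the paper simply quotes \cite[Theorem 1.4]{Ogu07} (after noting that the closure of a forward orbit is $f$-stable, hence equals the closure of the full orbit); your proposed transport of a dense orbit through the birational map needs an argument that the orbit can be chosen to avoid the indeterminacy locus, and your fallback ``no $f$-invariant rational fibration when $f$ has positive entropy'' is the right idea (an invariant fibration gives a nonzero nef class fixed by $f^*$, contradicting Proposition \ref{prop-hk-null}) but would still need to be written out together with Theorem \ref{thm-AC} over $\mathbb{C}$.
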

\begin{proof}
(1) implies (2) by Theorems \ref{main-thm-hk} and \ref{main-thm-qa-a}.
Let $x\in X$ be any point and let $Z$ be the closure of the orbit $\{f^n(x)\,|\,n\ge 0\}$.
Then $f(Z)\subseteq Z$ implies $f(Z)=Z$ and hence $Z$ is also the closure of the set $\{f^n(x)\,|\,n\in\mathbb{Z}\}$.
Then (1) and (3) are equivalent by \cite[Theorem 1.4]{Ogu07}.

Suppose $f$ is of null entropy and (2) holds. By Proposition \ref{prop-hk-null}, $f^*D\equiv D$ for some nef Cartier divisor $D\not\equiv 0$.
If $D^2>0$, then $D$ is nef and big and hence $f$ has finite order by Lemma \ref{lem-HK-finite}.
In particular, $\Per(f)\cap U=U$ is uncountable for any open dense subset $U$ of $X$, a contradiction.
If $D^2=0$, the Riemann-Roch theorem implies that $D$ is basepoint free.
Then we have an $f$-equivariant elliptic fibration $\pi:X\to \mathbb{P}^1$.
Denote by $g:=f|_{\mathbb{P}^1}$.
By (2), $\Per(g)$ is Zariski dense in $\mathbb{P}^1$ and hence $g$ has finite order.
Replacing $f$ by a positive power, we may assume $g=\id$.
Let $y$ be a general point of $\mathbb{P}^1$ such that the fibre $X_y:=\pi^{-1}(y)$ is a smooth elliptic curve and $\Per(f)\cap U\cap X_y\neq \emptyset$.
Then we may assume $f|_{X_y}$ is an isogeny after replacing $f$ by a positive power.
It is known that an (algebraic group) automorphism of an elliptic curve has finite order.
So $\Per(f|_{X_y})=X_y$ and hence $\Per(f)\cap U$ is uncountable, a contradiction.
\end{proof}

\begin{remark}
In the above corollary, Cantat \cite[2]{Can} showed that (1) and (3) are equivalent even when $X$ is not necessarily projective; see also \cite[Theorem 1.4]{Ogu07}. On the other hand, Xie  \cite[Theorem 1.1]{Xie} showed that (1) implies (2) even when $f$ is only a birational automorphism of a smooth projective surface over an algebraically closed field $k$ with $char\, k\neq 2, 3$.

In general, Amerik and Campana \cite{AC} showed that for a dominant meromorphic endomorphism $f:X\dasharrow X$ of a compact K\"ahler manifold $X$, there is a dominant meromorphic map $\pi:X\dasharrow Y$ onto a compact  K\"ahler manifold $Y$, such that
$\pi\circ f=\pi$ and the general fibre $X_y$ of $\pi$ is the Zariski closure of the orbit by $f$ of a general point of $X_y$.
Applying this, Lo Bianco \cite[Main Theorem]{Lo} showed that (1) implies (3) for the Hyperk\"ahler manifolds; see also Theorem \ref{main-thm-qa-a} for another application.
\end{remark}

K.~Oguiso suggested the following example that some K3 surface admits an automorphism of positive entropy which is not PCD.
\begin{example}[Oguiso]\label{exa-k3-1}
Let $S = {\rm Km} (E \times F)$ be the Kummer surface associated to the product of two non-isogenous elliptic curves $E$ and $F$.
Dinh and Oguiso \cite[Proposition 3.7]{DO} showed that there exists a subgroup $G$ of $\Aut(S)$ such that $G$ is not finitely generated.
Then $G$ contains some automorphism $f$ of positive entropy (cf.~\cite[Proposition 1.3]{HKZ}).
Note that $\Per(f)$ contains at least 8 curves (cf.~\cite[Section 3, Figure 1]{DO}).
So $f$ is not PCD.
\end{example}

D.-Q. Zhang suggested the following example that some K3 surface admits a PCD automorphism which is not amplified.

\begin{example}[Zhang]\label{exa-k3-2}
Let $S = {\rm Km} (E \times E)$ be the Kummer surface associated to the square of an elliptic curve $E$.
Let $\Tor_2$ be the set of 2-torsion points of $E\times E$. 
Denote by $\pi: \widetilde{S}\to E\times E$ the blowup of $\Tor_2$.
Denote by $\tau:\widetilde{S}\to S$ be the finite surjective morphism of degree 2.
Let $f:E\times E\to E\times E$ be the automorphism defined by $f(a,b)=(5a+8b,8a+13b)$.
Then $f$ fixes all the 2-torison points and $f$ is PCD (cf.~Theorem \ref{thm-pcd-cri}).
For any $n>0$, $(f^n)_*|_{T_P}$ is not a scalar action where $P$ is a 2-torsion point and $T_P$ is the tangent space of $E\times E$ at $P$.
Denote by $\widetilde{f}$ the equivariant lifting of $f$ to $\widetilde{S}$ and $f_S:=f|_S$ the equivariant descending of $f$ to $S$.
Then $\Per(\widetilde{f})=\pi^{-1}(\Per(f)\backslash\Tor_2)\bigcup \Per(\widetilde{f}|_{\pi^{-1}(\Tor_2)})$
is also countable.
In particular, $\widetilde{f}$ is PCD and hence so is $f_S$ by Lemma \ref{lem-fin-lift-des}.
Note that the 16 $\pi$-exceptional divisors are $\widetilde{f}$-invariant.
So $\widetilde{f}$ is not amplified (cf.~Proposition \ref{prop-a-cri}).
Similarly, $f$ is not amplified.
\end{example}

At the end of this section, we would like to ask a related question.
\begin{question}\label{que-hk}
Let $f$ be an automorphism of a projective Hyperk\"ahler manifold $X$.
Suppose $\Per(f)\cap U$ is countable and Zariski dense for some open dense subset $U$ of $X$. Will $f$ be of positive entropy?
Does $f$ admit a Zariski dense orbit?
\end{question}

\section{Case of abelian varieties}

Let $A$ be an abelian variety of dimension $g$.
We recall some facts from \cite[Sections 6, 8, 16]{Mum}.
Let $n$ be a nonzero integer. 
Denote by $n_A:A\to A$ the isogeny sending $a$ to $na$.
Let $L$ be a Cartier divisor of $A$.
Then the Euler characteristic $\chi(L)$ equals to $\frac{L^g}{g!}$ where $L^g$ is the highest self intersection of $L$.
Consider the following homomorphism to the dual abelian variety
$$\begin{aligned}
\phi_L: \, A \, \to \, A^{\vee}:=\Pic^0(A) \\
a \, \mapsto \, T_a^*L - L
\end{aligned}$$
where $T_a$ is the translation map by $a$.
Denote by $K(L)$ the kernel of $\phi_L$.
For any connected closed subgroup $B$, $L|_B\equiv 0$ if and only if $B$ is a subgroup of $K(L)$. 
In particular, $L|_{K(L)}\equiv 0$.
If $L$ is ample, then $K(L)$ is finite and hence $\phi_L$ is an isogeny.
If $K(L)$ is finite, then $\chi(L)\neq 0$.
For any surjective endomorphism $f:A\to A$,
we have $\phi_{f^*L}=f^{\vee}\circ \phi_L\circ f$, where $f^{\vee}:A^{\vee}\to A^{\vee}$ is the dual map of $f$.

In the following, we show that the building blocks of surjective endomorphisms of abelian varieties are automorphisms and amplified endomorphisms.
\begin{proposition}\label{prop-ab-end-des}
Let $f:A\to A$ be a surjective endomorphism of an abelian variety $A$ of positive dimension.
Then there is an $f$-equivariant surjective homomorphism $\pi:A\to B$ to an abelian variety $B$ of positive dimension such that the induced $f|_B$ is either an  automorphism or an amplified endomorphism.
\end{proposition}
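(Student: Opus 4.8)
I would analyze the action $f^*|_{\N^1(A)}$ and split off a factor on which $f$ becomes amplified, falling back to an automorphism only when no such splitting is available. First, recall from the preliminaries that $f^*f_* = (\deg f)\id$ on $\N^1(A)$, so $f^*|_{\N^1(A)}$ is invertible; let $r \geq 1$ be its spectral radius. If $r = 1$, then by Lemma \ref{lem-np-aut} (applied to $f$, which is of null entropy) $f$ is already an automorphism, and we may take $B = A$ and $\pi = \id$. So the interesting case is $r > 1$, i.e. $f$ is of positive entropy; here I want to produce a nontrivial $f$-equivariant quotient $\pi : A \to B$ on which the descended endomorphism is amplified.

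**Key steps.** The mechanism is to locate an $f$-stable abelian subvariety $A_0 \leq A$ ``supporting the null-entropy part'' and quotient it out. Concretely: replacing $f$ by a positive power, decompose the characteristic polynomial of $f^*|_{\N^1(A)}$ (equivalently, work with $f^*|_{H^1(A,\mathcal O_A)}$ or $f^*|_{T_0 A}$, since these control $\phi_L$ via $\phi_{f^*L} = f^\vee \circ \phi_L \circ f$) into the part with eigenvalues of modulus $1$ and the part with eigenvalues off the unit circle. By Poincaré reducibility and the fact that eigenvalues of modulus $1$ that are algebraic integers are roots of unity (Kronecker), after iterating I can write, up to isogeny, $A \sim A_1 \times A_2$ in an $f$-equivariant way, where $f^*$ acts on (the cotangent space of) $A_1$ with all eigenvalues $1$ and on $A_2$ with no eigenvalue of modulus $1$. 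Take $B := A_2$ and $\pi$ the projection (composed with the isogeny, which is $f$-equivariant after iteration). On $B$, I claim $f|_B$ is amplified: since no eigenvalue of $f^*|_{H^1(B,\mathcal O_B)}$ has modulus $1$, Theorem \ref{main-thm-ab1}(1) gives amplifiedness directly — though to avoid circularity with the proof of that theorem, I would instead argue directly that for an ample $H$ on $B$, the divisor $\sum$ of suitable signed iterates $(f^n)^*H$ (positive powers for contracting directions, chosen via the Perron–Frobenius eigenvector of $f_*$ on $\NE(B)$) yields $g^*D - D$ ample using Kleiman's criterion as in Proposition \ref{prop-a-cri}: no nonzero pseudoeffective $1$-cycle $Z$ on $B$ satisfies $(f|_B)_*Z \equiv_w Z$, because such a cycle would force an eigenvalue of modulus $1$ for the dual action. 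If $B$ has positive dimension we are done with case two; if $A_2 = 0$, then $r = 1$ after all and we are in the automorphism case, contradiction — so positivity of $\dim B$ is automatic when $r>1$.

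**The main obstacle.** The delicate point is producing the $f$-equivariant splitting $A \sim A_1 \times A_2$: Poincaré reducibility gives a complement only up to isogeny, and ensuring the complement can be chosen $f$-stable (rather than merely $f^n$-stable for the factor, after replacing $f$ by $f^n$) requires some care with the semisimple/unipotent decomposition of $f^*$ acting on the rational Hodge structure $H^1(A,\Q)$ — one forms $A_2$ as the image of a suitable idempotent in $\End(A)\otimes\Q$ commuting with $f$, built from the spectral projector onto the eigenvalues off the unit circle. One must check this projector is defined over $\Q$ (it is: the ``unit-circle'' eigenvalues form a Galois-stable set after iterating so that they are all $1$, so the two blocks are rational) and that clearing denominators to get an actual abelian subvariety $A_2 \leq A$ keeps it $f$-invariant, which it does because $f$ commutes with the projector. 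I expect this bookkeeping — Galois-stability of the eigenvalue partition, and the passage from $\End\otimes\Q$-idempotent to honest subvariety while preserving $f$-equivariance, both only after replacing $f$ by a bounded power — to be the technical heart; everything after the splitting is a short application of Proposition \ref{prop-a-cri} and Kleiman's criterion.
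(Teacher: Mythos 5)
There is a genuine gap, and it sits exactly where you located the ``technical heart.'' Your splitting of $f^*|_{H^1(A,\mathcal O_A)}$ into a unit-circle block and an off-circle block is not defined over $\Q$ in general: an eigenvalue of modulus $1$ need not be a root of unity, because Kronecker's theorem applies only when \emph{all} Galois conjugates lie on the unit circle. For a Salem polynomial the Galois orbit mixes roots on and off the unit circle, so your claim that ``the unit-circle eigenvalues form a Galois-stable set after iterating so that they are all $1$'' is false, the spectral projector does not lie in $\End(A)\otimes\Q$, and no $f$-equivariant isogeny decomposition $A\sim A_1\times A_2$ of the proposed kind exists. This is precisely the phenomenon behind Example \ref{exa-salem}: the automorphism $f$ of $A=E^{\times n}$ induced by a matrix with irreducible Salem characteristic polynomial has positive entropy, is not amplified, and (for $E$ without CM) admits no nontrivial $f$-stable abelian subvariety at all, hence no nontrivial equivariant quotient. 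Your case division (null entropy $\Rightarrow$ automorphism; positive entropy $\Rightarrow$ split off an amplified factor) therefore fails on this example: the conclusion of the proposition holds there only through the automorphism alternative with $B=A$, which your plan invokes only when the spectral radius on $\N^1(A)$ is $1$.

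For comparison, the paper avoids any spectral splitting: it writes $f=g+a$ with $g$ an isogeny and argues by induction on $\dim A$. If $f$ is neither amplified nor an automorphism, there is a Cartier divisor $L\not\equiv 0$ with $g^*L\equiv L$; since $\deg g>1$, comparing $(g^*L)^{\dim A}$ with $L^{\dim A}$ forces $L^{\dim A}=0$, so the neutral component $Z$ of $K(L)$ has $0<\dim Z<\dim A$, is $g$-stable, and one passes to the $f$-equivariant quotient $A\to A/Z$ and induction. If you want to salvage your approach, you must add the alternative ``$f$ is an automorphism'' (not merely ``$f$ has null entropy'') as a terminal case before attempting any splitting, and you still need a substitute for the non-rational spectral projector; the paper's fixed divisor class $L$ with vanishing top self-intersection is exactly that substitute.
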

\begin{proof}
We show by induction on $n:=\dim(A)$.
Write $f=g+a$ where $g$ is an isogeny and $a\in A$.
If $\dim(A)=1$, then $f$ is either an automorphism or a polarized endomorphism.
Suppose $f$ is neither amplified nor an automorphism.
Then so is $g$ since $T_a^*|_{\N^1(A)}=\id$.
Hence, $g^*L\equiv L$ for some Cartier divisor $L\not\equiv 0$.
Since $\deg g>1$, $L^n=(g^*L)^n=(\deg g)L^n$ implies that $L^n=0$.
Then $0<\dim(K(L))<n$.
Note that $\phi_L=\phi_{f^*L}=f^{\vee}\circ \phi_L\circ f$.
Let $Z$ be the neutral component of $K(L)$.
Since $0\in g(Z)$ and $L|_{g(Z)}\equiv 0$, we have $g(Z)\le K(L)$ and hence $g(Z)=Z$.
Let $B=A/Z$ and define $h:B\to B$ via $h(\overline{x})=\overline{f(x)}$.
It is easy to check that $h$ is well defined.
Note that $0<\dim(B)<\dim(A)$.
Then we are done by induction.
\end{proof}

Let $A$ be an abelian variety of dimension $n$.
Denote by $H^{k,k}(A, \mathbb{R})=H^{k,k}(A, \mathbb{C})\cap H^{2k}(A,\mathbb{R})$.
For $k=1$ and $n-1$, denote by $\Pos^k(A)$ the cone of positive $(k, k)$-forms in $H^{k,k}(A, \mathbb{R})$.
Note that $\Pos^1(A)\cap \N^1(A)=\Nef(A)=\PE^1(A)$ and $\Pos^{n-1}(A)\cap \N_1(A)=\NE(A)$.
We refer to \cite{DELV} and \cite[Chapter III]{Dem} for the details.

\begin{theorem}\label{thm-ab-amp-cri}
Let $f:A\to A$ be a surjective endomorphism of an abelian variety $A$ of dimension $n$.
Then the following are equivalent.
\begin{itemize}
\item[(1)] $f$ is amplified.
\item[(2)] $f^*\omega-\omega\in \Pos^1(A)^{\circ}$ for some $\omega\in H^{1,1}(A,\mathbb{R})$.
\item[(3)] No eigenvalue of $f^*|_{H^1(X,\mathcal{O}_X)}$ is of modulus $1$.
\item[(4)] $f_*Z-Z\in \NE(A)^{\circ}$ for some $Z\in \N_1(A)$.
\item[(5)] For any $Z\in \NE(X)$, $f_*Z\equiv_w Z$ implies $Z\equiv_w 0$.
\item[(6)] For any $D\in \Nef(X)$, $f^*D\equiv D$ implies $D\equiv 0$.
\item[(7)] For any $\omega\in \Pos^{n-1}(A)$, $f^*\omega=(\deg f) \omega$ implies $\omega=0$.
\item[(8)] For any $\omega\in \Pos^1(A)$, $f^*\omega=\omega$ implies $\omega=0$.
\end{itemize}
\end{theorem}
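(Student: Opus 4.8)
The plan is to reduce the whole statement to linear algebra on the tangent space and then run a web of implications, using Proposition \ref{prop-a-cri} for the two ``core'' equivalences and Stein's (discrete Lyapunov) theorem for the rest. First I would write $f=g+a$ with $a=f(0)$; then $g:=f-a$ is a surjective homomorphism, hence an isogeny, and since translations act trivially on cohomology, $f^{\ast}=g^{\ast}$ on $H^{\ast}(A,\mathbb{R})$ and $\deg f=\deg g$. Put $A=V/\Lambda$ with $V=T_{0}A$ and let $M:=dg_{0}\in\mathrm{GL}(V)$ be the analytic representation. Using the standard dictionary (cf.~\cite{DELV}, \cite[Chapter III]{Dem}) I identify $H^{1,1}(A,\mathbb{R})$ with the space of Hermitian forms on $V$, so that $\Pos^{1}(A)$ is the cone of positive semidefinite ones, $\Pos^{1}(A)^{\circ}$ the positive definite ones, $\Nef(A)=\Pos^{1}(A)\cap\N^{1}(A)$, and $g^{\ast}$ acts by $\omega\mapsto M^{\ast}\omega M$ (with $M^{\ast}$ the conjugate transpose); moreover $\Pos^{n-1}(A)$ is the dual cone of $\Pos^{1}(A)$ under the pairing $H^{1,1}(A,\mathbb{R})\times H^{n-1,n-1}(A,\mathbb{R})\to H^{2n}(A,\mathbb{R})=\mathbb{R}$, with $\NE(A)=\Pos^{n-1}(A)\cap\N_{1}(A)$. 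Since $A$ is abelian, $f_{\ast}f^{\ast}=f^{\ast}f_{\ast}=(\deg f)\,\mathrm{id}$ on cohomology, so $f^{\ast}\omega=(\deg f)\omega$ is equivalent to $f_{\ast}\omega=\omega$ on $H^{n-1,n-1}$. Finally the eigenvalues of $f^{\ast}|_{H^{1}(A,\mathcal{O}_{A})}=g^{\ast}|_{H^{0,1}(A)}$ are the complex conjugates of those of $M$, so $(3)$ says exactly that $M$ has no eigenvalue of modulus $1$.

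The linear-algebra core consists of two facts. For $(2)\Leftrightarrow(3)$: if $\mu\in\mathrm{Spec}(M)$ has modulus $1$ with eigenvector $v\neq0$, then $(M^{\ast}\omega M-\omega)(v,v)=(|\mu|^{2}-1)\,\omega(v,v)=0$ for every Hermitian $\omega$, so $f^{\ast}\omega-\omega$ is never positive definite; conversely, if $M$ has no eigenvalue of modulus $1$, split $V=V_{<}\oplus V_{>}$ into the $M$-invariant subspaces where the spectral radius of $M$ is $<1$ resp.~$>1$, and summing $\sum_{k\ge0}(M^{\ast})^{k}QM^{k}$ for a positive definite $Q$ (resp.~the analogous series for $M^{-1}$) produces a negative definite $H_{<}$ on $V_{<}$ and a positive definite $H_{>}$ on $V_{>}$ with $M^{\ast}H_{\bullet}M-H_{\bullet}>0$, so the block form $\omega=H_{<}\oplus H_{>}$ satisfies $f^{\ast}\omega-\omega\in\Pos^{1}(A)^{\circ}$. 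For $(3)\Leftrightarrow(8)$: if $\omega\in\Pos^{1}(A)$ and $f^{\ast}\omega=\omega$, then $\ker\omega$ is $M$-invariant and $M$ acts unitarily on $V/\ker\omega$ for the induced positive definite form, so its eigenvalues — which lie in $\mathrm{Spec}(M)$ — all have modulus $1$; if $\omega\neq0$ this contradicts $(3)$. Conversely, given $\mu\in\mathrm{Spec}(M)$ of modulus $1$, choose $w\neq0$ with $M^{\ast}w=\bar\mu w$; then $\omega_{0}:=ww^{\ast}\in\Pos^{1}(A)\setminus\{0\}$ satisfies $f^{\ast}\omega_{0}=|\mu|^{2}\omega_{0}=\omega_{0}$.

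Now I assemble the implications. Proposition \ref{prop-a-cri} gives $(1)\Leftrightarrow(5)$; and $(1)\Rightarrow(2)$ because an ample class lies in $\Pos^{1}(A)^{\circ}$, so one takes $\omega=[L]$ with $f^{\ast}L-L$ ample. For $(3)\Rightarrow(7)$: using $(3)\Rightarrow(2)$, pick $\alpha_{0}$ with $\beta_{0}:=f^{\ast}\alpha_{0}-\alpha_{0}\in\Pos^{1}(A)^{\circ}$; if $\omega\in\Pos^{n-1}(A)$ with $f^{\ast}\omega=(\deg f)\omega$, i.e.~$f_{\ast}\omega=\omega$, the projection formula gives $\langle\omega,\beta_{0}\rangle=\langle f_{\ast}\omega,\alpha_{0}\rangle-\langle\omega,\alpha_{0}\rangle=0$, and since $\Pos^{n-1}(A)$ is the dual cone of $\Pos^{1}(A)$ while $\beta_{0}$ is interior, $\omega=0$. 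Restricting $(7)$ to $\N_{1}(A)$ (where $f_{\ast}Z=Z$ forces $f^{\ast}Z=(\deg f)Z$) gives $(7)\Rightarrow(5)$, so $(1),(2),(3),(5),(7)$ are equivalent. Next $(8)\Rightarrow(6)$ since $\Nef(A)\subseteq\Pos^{1}(A)$; running the proof of Proposition \ref{prop-a-cri} with $(\N^{1}(A),\Nef(A),\Amp(A),f^{\ast})$ replaced by $(\N_{1}(A),\NE(A),\NE(A)^{\circ},f_{\ast})$ — separating the subspace $\operatorname{Im}(f_{\ast}|_{\N_{1}(A)}-\mathrm{id})$ from the open cone $\NE(A)^{\circ}$ by a functional and invoking Kleiman's criterion — gives $(4)\Leftrightarrow(6)$. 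Finally $(4)\Rightarrow(3)$: if $M$ has an eigenvalue of modulus $1$, then with $\omega_{0}=ww^{\ast}$ as above $\langle f_{\ast}Z-Z,\omega_{0}\rangle=\langle Z,f^{\ast}\omega_{0}-\omega_{0}\rangle=0$ for all $Z$, whereas any class $\xi\in\NE(A)^{\circ}$ can be written $\xi=t\,H^{n-1}+(\xi-t\,H^{n-1})$ with $H$ ample, $t>0$ and $\xi-t\,H^{n-1}\in\NE(A)$, hence pairs strictly positively with $\omega_{0}\in\Pos^{1}(A)\setminus\{0\}$; this contradicts $(4)$. With $(3)\Leftrightarrow(8)$ in hand, $(3),(4),(6),(8)$ are equivalent, so all eight conditions are.

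The substance is concentrated in the Lyapunov construction behind $(3)\Rightarrow(2)$ and in the passage between the transcendental positivity cones $\Pos^{1}(A),\Pos^{n-1}(A)$ and their algebraic slices $\Nef(A),\NE(A)$: the form $\beta_{0}$ produced when unit-modulus eigenvalues are absent and the obstruction $\omega_{0}$ produced when they are present are genuine $(1,1)$-forms that need not be algebraic, and handling them requires both the duality $\Pos^{n-1}(A)=(\Pos^{1}(A))^{\vee}$ and the strict positivity of a Kähler power against a nonzero positive $(1,1)$-form (Demailly's positivity calculus on $V$). By contrast, the remaining bookkeeping — recognizing $(1)\Leftrightarrow(5)$ and $(4)\Leftrightarrow(6)$ as Kleiman-type dualities of one another and closing the web of implications — is routine once the dictionary is set up.
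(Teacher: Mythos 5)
Your proposal is correct, and its skeleton matches the paper's: reduce everything to linear algebra for the action on $H^1$, get $(1)\Leftrightarrow(5)$ from Proposition \ref{prop-a-cri} and $(4)\Leftrightarrow(6)$ from its dual (separation plus Kleiman), and note $(1)\Rightarrow(2)$, $(7)\Rightarrow(5)$, $(8)\Rightarrow(6)$ as the paper does. Where you differ is in the execution of the core implications. The paper never proves $(3)\Rightarrow(2)$ directly: it fixes a Jordan basis of $f^*|_{H^1(A,\mathcal{O}_A)}$ and establishes $(2)\Rightarrow(3)$, $(3)\Rightarrow(8)$ and $(3)\Rightarrow(7)$ by explicit coefficient bookkeeping in $H^{1,1}$ and $H^{n-1,n-1}$, closing the chain via $(7)\Rightarrow(5)\Rightarrow(1)\Rightarrow(2)$. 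You instead prove $(3)\Rightarrow(2)$ by the discrete Lyapunov (Stein) sums $\pm\sum_k (M^*)^kQM^k$ on the contracting and expanding spectral subspaces of the analytic representation $M$, deduce $(3)\Rightarrow(7)$ from this together with the duality $\Pos^{n-1}(A)=(\Pos^{1}(A))^{\vee}$, and replace the Jordan computations for $(3)\Leftrightarrow(8)$ and $(2)\Rightarrow(3)$ by the kernel/unitarity argument and the rank-one eigenform $\omega_0=ww^{*}$; all of these check out. Your route is basis-free, and it has one genuine advantage: the implication $(4)\Rightarrow(3)$, obtained by pairing $f_*Z-Z$ against the fixed form $\omega_0$ and using that interior classes of $\NE(A)$ pair strictly positively with nonzero elements of $\Pos^{1}(A)$, closes the cycle through $(4)$ and $(6)$. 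In the paper's proof as written, the only link between $\{(4),(6)\}$ and the other six conditions is $(8)\Rightarrow(6)$, so the return direction from $(4)$ or $(6)$ is left implicit; your argument supplies it explicitly. What the paper's route buys in exchange is that the coefficient computations need nothing beyond the Jordan form—no spectral splitting and no appeal to the cone duality for $\Pos^{n-1}(A)$—and they handle $(3)\Rightarrow(7)$ without first constructing a Lyapunov form. Both treatments rest on the same dictionary from \cite{DELV} and \cite[Chapter III]{Dem} (e.g. $\Nef(A)=\Pos^{1}(A)\cap\N^1(A)$, $\NE(A)=\Pos^{n-1}(A)\cap\N_1(A)$, numerical equals homological equivalence on $A$) and both implicitly reduce to $k=\mathbb{C}$, so you are at the same level of rigor on those points.
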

\begin{proof}
(1) and (5) are equivalent by Proposition \ref{prop-a-cri}.
(4) and (6) are equivalent by the same proof of Proposition \ref{prop-a-cri}.
Clearly, (1) implies (2).

Consider the Jordan canonical form of $f^*|_{H^1(X,\mathcal{O}_X)}$ with the Jordan blocks $J_1,\cdots, J_m$.
Let $r_i$ be the rank of $J_i$ and $\lambda_i$ the corresponding eigenvalue of $J_i$. 
Let $\{x_{i_j}\}_j$ be the corresponding basis of $J_i$ such that $f^*x_{i_j}=\lambda_i x_{i_j}+x_{i_{j+1}}$ if $j<r_i$ and $f^*x_{i_j}=\lambda_i x_{i_j}$ if $j=r_i$.
Note that $\{x_{i_j}\wedge \overline{x_{i'_{j'}}}\}_{i_j, i'_{j'}}$ forms a basis of $H^{1,1}(A,\mathbb{C})$ and 
$f^*|_{H^1(X,\mathcal{O}_X)}$ determines $f^*|_{H^{1,1}(A,\mathbb{C})}$.
Suppose $|\lambda_1|=1$.
If $r_1>1$, then $f^*(x_{1_1}\wedge \overline{x_{1_1}})-x_{1_1}\wedge \overline{x_{1_1}}=\lambda_1 x_{1_1}\wedge \overline{x_{1_2}}+\overline{\lambda_1} x_{1_2}\wedge \overline{x_{1_1}}+x_{1_2}\wedge \overline{x_{1_2}}$.
If $r_1=1$, then $f^*(x_{1_1}\wedge \overline{x_{1_1}})-x_{1_1}\wedge \overline{x_{1_1}}=0$.
Note that the coefficient of $x_{1_1}\wedge \overline{x_{1_1}}$ in $f^*(x_{i_j}\wedge \overline{x_{i'_{j'}}})$ is $0$ for any $i_j\neq 1_1$ or $i'_{j'}\neq 1_1$.
Therefore, for any $\omega\in H^{1,1}(A,\mathbb{R})$, the coefficient of $x_{1_1}\wedge \overline{x_{1_1}}$ in $f^*\omega-\omega$ is $0$ and hence $f^*\omega-\omega\not\in \Pos^1(A)^{\circ}$.
So (2) implies (3).

Suppose $f^*\omega=\omega$ for some nonzero $\omega\in \Pos^1(A)$.
Write $\omega=\sum a_{i_j,i'_{j'}} x_{i_j}\wedge \overline{x_{i'_{j'}}}$.
Let $s$ be the minimal one such that $a_{s_j,s_j}\neq 0$ for some $j$.
Let $t$ be the minimal one such that $a_{s_t,s_j}\neq 0$ for some $j$.
Let $t'$ be the minimal one such that $a_{s_t,s_{t'}}\neq 0$.
Then the coefficient of $x_{s_t}\wedge\overline{x_{s_{t'}}}$ in $f^*\omega$ is $|\lambda_s|^2a_{s_t,s_{t'}}$.
So $|\lambda_s|^2=1$ and hence (3) implies (8).
Clearly, (8) implies (3) and (6).

Note that $f^*(x_{1_1}\wedge\cdots\wedge x_{m_{r_m}} \wedge \overline{x_{1_1}}\wedge\cdots\wedge \overline{x_{m_{r_m}}})=|\lambda_1^{r_1}\cdots \lambda_m^{r_m}|^2x_{1_1}\wedge\cdots\wedge x_{m_{r_m}} \wedge \overline{x_{1_1}}\wedge\cdots\wedge \overline{x_{m_{r_m}}}$.
Then $\deg f=|\lambda_1^{r_1}\cdots \lambda_m^{r_m}|^2$.
Denote by the $(n-1, n-1)$ form $$x^*_{i_j}\wedge \overline{x^*_{i'_{j'}}}=x_{1_1}\wedge\cdots\wedge \widehat{x_{i_j}}\wedge \cdots \wedge x_{m_{r_m}} \wedge \overline{x_{1_1}}\wedge\cdots\wedge\widehat{\overline{x_{i'_{j'}}}}\wedge\cdots\wedge \overline{x_{m_{r_m}}}.$$
Suppose $f^*\omega=(\deg f)\omega$ for some nonzero $\omega\in \Pos^{n-1}(A)$.
Write $\omega=\sum a_{i_j,i'_{j'}} x^*_{i_j}\wedge \overline{x^*_{i'_{j'}}}$.
Take $s, t, t'$ like above.
Then the coefficient of $x^*_{s_t}\wedge\overline{x^*_{s_{t'}}}$ in $f^*\omega$ is $\frac{\deg f}{|\lambda_s|^2}a_{s_t,s_{t'}}$.
So $|\lambda_s|^2=1$ and hence (3) implies (7).

Suppose $f_*Z\equiv_w Z$ for some nonzero $Z\in \NE(A)$.
By the projection formula, $f^*Z\equiv_w (\deg f) Z$.
So (7) implies (5).
\end{proof}

\begin{lemma}\label{lem-iso-pd}
Let $f:A\to A$ be an isogeny of an abelian variety $A$.
Then $\Per(f)$ is Zariski dense in $A$.
\end{lemma}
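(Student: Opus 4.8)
The plan is to produce enough periodic points among the $\ell$-power torsion of $A$ for a well-chosen prime $\ell$. We may assume $g:=\dim A\ge 1$, the case $g=0$ being trivial. Since $f$ is an isogeny, it is a surjective group homomorphism and its kernel $\ker f$ is a finite group of order $\deg f$ (the isogeny is automatically separable in characteristic $0$); in particular $\ker f\subseteq A[\deg f]$ by Lagrange. First I would fix a prime number $\ell$ not dividing $\deg f$, which is possible since there are infinitely many primes.

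Next I would check that each finite subgroup $A[\ell^k]=\{x\in A:\ell^k x=0\}$, $k\ge 1$, is preserved by $f$ and that $f$ acts bijectively on it. Invariance is immediate, since $\ell^k f(x)=f(\ell^k x)=0$ whenever $\ell^k x=0$. For bijectivity it suffices to see that $f|_{A[\ell^k]}$ is injective, and indeed $\ker f\cap A[\ell^k]\subseteq A[\deg f]\cap A[\ell^k]=A[\gcd(\deg f,\ell^k)]=\{0\}$. Hence $f$ restricts to a permutation of the finite set $A[\ell^k]$, so $f^{m_k}|_{A[\ell^k]}=\operatorname{id}$ for some integer $m_k\ge 1$; that is, $A[\ell^k]\subseteq\Fix(f^{m_k})\subseteq\Per(f)$. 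Letting $k$ range over all positive integers, this gives $\bigcup_{k\ge 1}A[\ell^k]\subseteq\Per(f)$.

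It then remains to recall that the subgroup $\bigcup_{k\ge 1}A[\ell^k]$ of $\ell$-power torsion points is Zariski dense in $A$ (classical; see also \cite{Mum}). For completeness one can argue directly: let $B$ be its Zariski closure, a closed subgroup of $A$, let $B^0$ be the identity component, and put $d=[B:B^0]$ and $e=\dim B^0$. Each connected component of $B$ is a coset of the abelian variety $B^0$, hence contains at most $|B^0[\ell^k]|=\ell^{2ek}$ points killed by $\ell^k$; since $A[\ell^k]\subseteq B$ has $\ell^{2gk}$ elements, we obtain $\ell^{2gk}\le d\,\ell^{2ek}$ for every $k\ge 1$, which forces $e=g$. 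Thus $B^0=A$ and $B=A$, so $\Per(f)$ is Zariski dense in $A$, as desired.

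I do not anticipate a genuine obstacle here. The only points requiring some care are the bookkeeping on the order of $\ker f$ (immediate from separability in characteristic $0$) and the Zariski density of the $\ell$-power torsion; the latter may either be cited or obtained from the short subgroup/dimension count above. Note in particular that one cannot simply invoke Theorem \ref{thm-amp-pcd}, since an isogeny need not be amplified (e.g. $f=\operatorname{id}$), so a direct argument of this kind is genuinely needed.
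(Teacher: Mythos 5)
Your proof is correct and follows essentially the same route as the paper: torsion points of order prime to $\deg f$ are permuted by $f$ (because the kernel of $f$ is killed by $\deg f$), hence lie in $\Per(f)$, and one then verifies that this torsion is Zariski dense. The only divergence is in the density step: the paper takes the closure $B$ of all $\Tor_n(A)$ with $(n,\deg f)=1$ and, assuming $B\neq A$, applies Poincar\'e's complete reducibility theorem to get a complementary abelian subvariety $C$ and derives a contradiction by counting $\Tor_n(C)$ for large $n$, whereas you restrict to $\ell$-power torsion for a single prime $\ell\nmid\deg f$ and compare the growth $\ell^{2gk}$ against the count $d\,\ell^{2ek}$ of $\ell^k$-torsion in the cosets of the identity component of the closure; both arguments are sound, so no changes are needed.
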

\begin{proof}
Let $n$ be a positive integer such that $(n,\deg f)=1$.
Denote by $\Tor_n(A)$ be the set of $n$-torsion points of $A$.
For any $x\in \Tor_n(A)$, $f(x)\in \Tor_n(A)$.
We claim that $f|_{\Tor_n(A)}$ is a bijection.
First, if $f(x)=0$, then $(\deg f)x=0$.
Since $(n, \deg f)=1$, $x=0$.
So $f|_{\Tor_n(A)}$ is injective and hence bijective since $\Tor_n(A)$ is finite.
In particular, $\bigcup_{(n,\deg f)=1} \Tor_n(A)\subseteq \Per(f)$.
Let $B$ be the closure of $\bigcup_{(n,\deg f)=1} \Tor_n(A)$.
Then $B$ is a closed subgroup of $A$.
Suppose $B\neq A$.
By Poincar\'e's complete reducibility theorem (cf.~\cite[\S 19, Theorem 1]{Mum}), there is an abelian subvariety $C$ of $A$ such that $B\cap C$ is finite and $B+C=A$.
Then for any $n>\sharp B\cap C$ and $(n, \deg f)=1$, we have  $\{0\}\neq \Tor_n(C)\subseteq \Tor_n(A)\subseteq B$ and hence $\Tor_n(C)\subseteq B\cap C$. 
However, $\sharp \Tor_n(C)>n>\sharp B\cap C$,  a contradiction.
\end{proof}

\begin{lemma}\label{lem-pcd+a}
Let $f:A\to A$ be a PCD surjective endomorphism of an abelian variety $A$.
Then $f+a$ is PCD for any $a\in A$.
\end{lemma}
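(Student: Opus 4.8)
The plan is to reduce everything to the characterization of PCD endomorphisms in Proposition \ref{prop-ext}(3): a surjective endomorphism $h$ of $A$ is PCD if and only if $\Per(h)$ is Zariski dense in $A$ and $\Fix(h^i)$ is finite for every $i>0$. Write $f=L+b$ where $b:=f(0)$ and $L$ is the homomorphism $x\mapsto f(x)-f(0)$ (which is an isogeny since $f$ is surjective); then $g:=f+a=L+(a+b)$, so $f$ and $g$ have the same linear part $L$. For every $i>0$ one has $f^{i}(x)=L^{i}(x)+c_i$ for some $c_i\in A$, so $\Fix(f^{i})$ is the solution set of $(L^{i}-\id_A)(x)=-c_i$; in particular it is either empty or a coset of the closed subgroup scheme $K_i:=\Ker(L^{i}-\id_A)$, and similarly for $\Fix(g^{i})$.

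The key step is to show that $L-\id_A$ is an isogeny. Since $f$ is PCD, $\Per(f)=\bigcup_{i>0}\Fix(f^{i})$ is Zariski dense in $A$, in particular nonempty, so $\Fix(f^{i_0})\neq\emptyset$ for some $i_0>0$. By Proposition \ref{prop-ext}(3), $\Fix(f^{i_0})$ is finite; being a coset of $K_{i_0}$, this forces $K_{i_0}$ to be finite. From the factorization $L^{i_0}-\id_A=(L-\id_A)\circ(\id_A+L+\cdots+L^{i_0-1})$ we obtain $\Ker(L-\id_A)\subseteq K_{i_0}$, so $L-\id_A$ is a homomorphism $A\to A$ with finite kernel, hence surjective, i.e. an isogeny.

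Finally I would conclude by a translation conjugacy. Since $L-\id_A$ is surjective, choose $c\in A$ with $(L-\id_A)(c)=a$, and let $T_c$ denote translation by $c$, which is an automorphism of the variety $A$. A direct computation gives
$$ T_c^{-1}\circ f\circ T_c \,=\, g, $$
so that $g^{n}=T_c^{-1}\circ f^{n}\circ T_c$, whence $\Fix(g^{n})=T_c^{-1}(\Fix(f^{n}))$ for all $n>0$ and $\Per(g)=T_c^{-1}(\Per(f))$. Therefore $\Fix(g^{n})$ is finite for every $n>0$ and $\Per(g)$ is Zariski dense in $A$, so $g$ is PCD by Proposition \ref{prop-ext}(3).

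The step needing care is the passage to the conjugacy: $f$ itself need not have a fixed (or even periodic) point, so there is no a priori reason for $f$ and $f+a$ to be translation-conjugate. What makes it work is that the PCD hypothesis is strong enough to force $L-\id_A$ to be an isogeny, hence $a\in\Imm(L-\id_A)$, which is exactly the obstruction to translation-conjugating $f$ and $f+a$. (This is also consistent with the cohomological picture, since translations act trivially on $H^1(A,\mathcal{O}_A)$, so $f^*$ and $g^*$ agree there; cf. Theorem \ref{main-thm-ab1}. The argument above does not rely on this.)
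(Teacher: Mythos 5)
Your proof is correct, but it follows a genuinely different route from the paper's. Both arguments turn on the same key mechanism: by Proposition \ref{prop-ext}(3), the PCD hypothesis forces some $\Fix(f^{i_0})$ to be nonempty and finite, which makes the relevant ``endomorphism minus identity'' an isogeny. The paper exploits this by recentering so that $f^{n}$ is an isogeny, deducing that $f^{in}-\id_A$ is an isogeny for all $i$, hence that each $\Fix(g^{in})$ is finite and nonempty, and then recentering $g^{n}$ at one of its fixed points and invoking Lemma \ref{lem-iso-pd} to obtain Zariski density of $\Per(g^{n})$; in other words, it verifies the two conditions of Proposition \ref{prop-ext}(3) for $g$ one by one. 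You instead show that $L-\id_A$ itself is an isogeny (via $\Ker(L-\id_A)\subseteq\Ker(L^{i_0}-\id_A)$), so that $a\in\Imm(L-\id_A)$ and $g=T_c^{-1}\circ f\circ T_c$ for a translation $T_c$; the PCD property then transfers tautologically, with no appeal to Lemma \ref{lem-iso-pd} and no second recentering. Your route buys a stronger structural conclusion, namely that under the PCD hypothesis $f$ and $f+a$ are literally conjugate by a translation (which also explains why they share the same action on $H^{1}(A,\mathcal{O}_A)$), while the paper's verification-by-parts stays entirely within the two defining conditions and reuses Lemma \ref{lem-iso-pd}, which it needs elsewhere anyway. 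The computations you rely on (the coset description of $\Fix(f^{i})$, the finiteness of $\Ker(L-\id_A)$, and $T_c^{-1}\circ f\circ T_c=L+b+(L-\id_A)(c)=f+a$) all check out.
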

\begin{proof}
Denote by $g:=f+a$.
Since $f$ is PCD, $\Fix(f^n)\neq \emptyset$ for some $n>0$ and we may assume $f^n$ is an isogeny.
Note that $g^n=f^n+b$ for some $b\in A$.
By Proposition \ref{prop-ext}, $\Fix(f^{in})$ is finite for each $i>0$.
So $f^{in}-\id_A$ is still an isogeny and hence $\Fix(g^{in})$ is finite and nonempty for each $i>0$.
Since $\Fix(g^n)\neq \emptyset$, $g^n$ is an isogeny after choosing a suitable origin point.
By Lemma \ref{lem-iso-pd}, $\Per(g^n)$ is Zariski dense and hence $g$ is PCD by Proposition \ref{prop-ext}.
\end{proof}

Krieger and Reschke \cite[Proposition 2.5]{KR} gave the following characterization of PCD isogenies. By applying the above lemma, we generalize it a little bit.
\begin{theorem}\label{thm-pcd-cri}
Let $f:A\to A$ be a surjective endomorphism of an abelian variety $A$ of dimension $n$.
Then  $f$ is PCD if and only if none of the eigenvalues of $f^*|_{H^1(X,\mathcal{O}_X)}$ are roots of unity.
\end{theorem}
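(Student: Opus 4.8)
The plan is to reduce the statement to the case where $f$ is an isogeny, which is \cite[Proposition 2.5]{KR}, and then to recall the short cohomological argument behind that case. For the reduction I would write $f = g + a$ with $a := f(0_A) \in A$ and $g := T_{-a}\circ f$, so that $g(0_A) = 0_A$; by rigidity $g$ is a homomorphism, and being surjective it is an isogeny. Translations act trivially on $\Pic^0(A)$, hence on its Lie algebra $H^1(A,\mathcal{O}_A)$, so $g^*|_{H^1(A,\mathcal{O}_A)} = f^*|_{H^1(A,\mathcal{O}_A)}$ and the eigenvalue condition is literally the same for $f$ and for $g$. Since $g = f + (-a)$ and $f = g + a$, two applications of Lemma \ref{lem-pcd+a} give that $f$ is PCD if and only if $g$ is PCD. (If one prefers to quote \cite[Proposition 2.5]{KR} verbatim, one may first descend $g$ to a finitely generated --- hence countable --- subfield of $k$ over $\mathbb{Q}$ and base change to $\mathbb{C}$, as in the proof of Lemma \ref{lem-pcd-pos}; this changes neither the PCD property, by Proposition \ref{prop-ext}(4), nor the characteristic polynomial of $g^*$ on $H^1(A,\mathcal{O}_A)$, by flat base change.)

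For an isogeny $g$ the argument is essentially linear algebra together with Lemma \ref{lem-iso-pd}. One has $\Fix(g^m) = \ker(g^m - \id_A)$, and $g^m - \id_A$ is an isogeny exactly when $(g^*)^m - \id$ acts invertibly on $H^1(A,\mathcal{O}_A)$, i.e. exactly when no eigenvalue of $g^*|_{H^1(A,\mathcal{O}_A)}$ is an $m$-th root of unity. So if some eigenvalue of $g^*|_{H^1(A,\mathcal{O}_A)}$ is a root of unity, then $\Fix(g^m)$ is positive-dimensional for a suitable $m > 0$, hence uncountable over an uncountable field, and $g$ is not PCD. Conversely, if no eigenvalue is a root of unity, then $\Fix(g^m)$ is finite for every $m > 0$, and by Proposition \ref{prop-ext}(3) it remains only to show that $\Per(g)$ is Zariski dense. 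Now $\Per(g) = \bigcup_m \ker(g^m - \id_A)$ is a subgroup of $\Tor(A)$, so its Zariski closure is a $g$-invariant closed subgroup whose identity component is an abelian subvariety $B$. If $B \neq A$, I would pass to the induced isogeny $\overline{g}$ on $A/B$: since $H^1(B,\mathcal{O}_B)$ is a $g$-equivariant quotient of $H^1(A,\mathcal{O}_A)$, no eigenvalue of $(g|_B)^*$ on $H^1(B,\mathcal{O}_B)$ is a root of unity, so $(g^m - \id)|_B$ is surjective for all $m$; this lets one lift fixed points, so the image of $\Per(g)$ in $A/B$ equals $\Per(\overline{g})$, which is Zariski dense in the positive-dimensional $A/B$ by Lemma \ref{lem-iso-pd}. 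This contradicts that the image of $\Per(g)$ lies in the finite group $\overline{\Per(g)}/B$. Hence $B = A$, $\Per(g)$ is Zariski dense, and $g$ is PCD.

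The only genuinely new ingredient over \cite{KR} is the passage from surjective endomorphisms to isogenies via Lemma \ref{lem-pcd+a}; the isogeny case itself is \cite[Proposition 2.5]{KR}. The step I expect to require the most care is the bookkeeping with the eigenvalue condition: tracking that ``no eigenvalue of $f^*|_{H^1(A,\mathcal{O}_A)}$ is a root of unity'' is stable under the decomposition $f = g + a$ and under passing to $B$ and to $A/B$, and reconciling it with the normalization used in \cite{KR} --- for instance, on $H^1(A,\mathbb{Z}) \otimes \mathbb{C}$ the eigenvalues occur in conjugate pairs $\lambda, \overline{\lambda}$, so there the root-of-unity condition is unchanged.
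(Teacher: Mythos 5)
Your proposal follows essentially the same route as the paper: write $f=g+a$ with $g$ an isogeny, use Lemma \ref{lem-pcd+a} (in both directions) together with the triviality of $T_a^*$ on $H^1(A,\mathcal{O}_A)$ to reduce to the isogeny case, and then invoke \cite[Proposition 2.5]{KR}. Your additional sketch of the isogeny case itself (which the paper simply cites) is correct, so the argument is complete.
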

\begin{proof}
Write $f=g+a$ where $g$ is an isogeny and $a\in A$.
By Lemma \ref{lem-pcd+a}, $f$ is PCD if and only if so is $g$.
Note that $T_a^*|_{H^1(X,\mathcal{O}_X)}=\id$.
So the theorem follows from \cite[Proposition 2.5]{KR}.
\end{proof}

Now we can construct a PCD endomorphism (automorphism) which is not amplified.
\begin{example}\label{exa-salem}
Let $\varphi(x)=\sum\limits_{i=0}^n a_i x^i$ be a Salem polynomial where $a_0=a_n=1$ and $n>2$.
For example, we may take the Lehmer's polynomial $$\varphi(x)=x^{10} +x^9-x^7-x^6-x^5-x^4-x^3+x+1.$$
It is known that $\varphi(x)$ is irreducible and it has exactly two real roots $\alpha>1$ and $1/\alpha$ off the unit circle
$S^1 :=\{z\in\mathbb{C}\,|\,|z|=1\}$.
Note that no root of $\varphi$ is a root of unity.
Let $M\in \GL_n(\mathbb{Z})$ such that the characteristic polynomial of $M$ is $\varphi$.
For example, we may take $M$ as
$$\begin{pmatrix} 
0    &0    &\cdots    &0    &-a_0    \\ 
1    &0    &\cdots    &0    &-a_1  \\ 
0    &1    &\cdots    &0    &-a_2\\
\vdots & \vdots &\ddots &\vdots &\vdots\\
0    &0    &\cdots    &1    &-a_n
\end{pmatrix}$$
Let $E$ be an elliptic curve and $A:=E^{\times n}$.
Then $M$ induces an automorphism $f:A\to A$ via $f(x)=Mx$.
Note that no eigenvalue of $f^*|_{H^1(A,\mathcal{O}_A)}$ is a root of unity.
By Theorem \ref{thm-pcd-cri}, $f$ is PCD.
However, some eigenvalue is of modulus $1$.
So $f$ is not amplified by Theorem \ref{thm-ab-amp-cri}.
\end{example}

Applying \cite[Proposition 2.5]{KR} (cf.~Theorem \ref{thm-pcd-cri}), we can rewrite a result of Pink and Roessler.
\begin{theorem}\label{thm-pcd-PR} (cf.~\cite[Theorem 2.4]{PR})
Let $f:A\to A$ be a PCD endomorphism of an abelian variety $A$.
Suppose $f(X)=X$ for some (irreducible) closed subvariety $X$ of $A$.
Then $X$ is an abelian variety.
\end{theorem}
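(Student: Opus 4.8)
The plan is to reduce the statement to Pink and Roessler's theorem on $\psi$-invariant subvarieties, \cite[Theorem 2.4]{PR}; on our side the work is only to match hypotheses via Theorem \ref{thm-pcd-cri} and to perform two harmless reductions. By Theorem \ref{thm-pcd-cri}, the hypothesis that $f$ is PCD says precisely that no eigenvalue of $f^{*}|_{H^{1}(A,\mathcal{O}_{A})}$ is a root of unity; this is exactly the numerical hypothesis under which \cite[Theorem 2.4]{PR} concludes that a $\psi$-invariant irreducible closed subvariety is a translate of an abelian subvariety.

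It is convenient to reduce first to the case where $f$ is an isogeny. Write $f=g+a$ with $g\colon A\to A$ a group endomorphism and $a=f(0)$; since $f$ is surjective, $g$ is an isogeny, and as translations act trivially on $H^{1}(A,\mathcal{O}_{A})$ the isogeny $g$ still has no root-of-unity eigenvalue there. As $f$ is PCD, by Proposition \ref{prop-ext}(3) there is $m>0$ with $\Fix(f^{m})$ finite and nonempty; pick $p\in\Fix(f^{m})$. Conjugation by the translation $T_{p}$ carries $f^{m}$ to the isogeny $T_{-p}\circ f^{m}\circ T_{p}=g^{m}$ and carries $X$ to $X':=X-p$; a direct computation from $f^{m}(X)=X$ and $f^{m}(p)=p$ shows $g^{m}(X')=X'$, and $g^{m}$ again has no root-of-unity eigenvalue on $H^{1}(A,\mathcal{O}_{A})$. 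Since $X=X'+p$, it suffices to prove the statement for $(X',g^{m})$; so we may assume that $f$ itself is an isogeny stabilizing the irreducible closed subvariety $X$, with no root-of-unity eigenvalue on $H^{1}(A,\mathcal{O}_{A})$.

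In this situation \cite[Theorem 2.4]{PR} applies directly and gives that $X$ is a translate of an abelian subvariety of $A$ (in fact by a torsion point), hence, as a variety over the algebraically closed field $k$, is isomorphic to that abelian subvariety, i.e.\ is an abelian variety, which is the assertion. If $k\neq\overline{\mathbb{Q}}$, one first spreads $A$, $X$ and $f$ out over a finitely generated $\mathbb{Z}$-algebra; the relevant characteristic polynomials are constant in the family, so every closed fibre satisfies the same hypothesis and \cite[Theorem 2.4]{PR} applies fibrewise; since the locus of the base over which $X_{s}$ is a translate of an abelian subvariety is constructible and meets a dense set of closed points, it contains the generic point, and the conclusion descends to $k$.

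The essential difficulty is \cite[Theorem 2.4]{PR} itself, which is of Manin--Mumford strength: its proof passes to the reduction of $A$ and $X$ modulo a prime of good reduction, where $X$ is stable both under $f$ and under the $q$-power Frobenius (whose eigenvalues all have absolute value $\sqrt{q}$), exploits this interplay together with the non-root-of-unity hypothesis to force a Zariski-dense set of torsion points onto $X$, and then lifts the conclusion back to characteristic zero. Granting \cite{PR}, the only remaining work is the bookkeeping of the second paragraph together with the routine specialization in the third.
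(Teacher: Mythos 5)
Your proposal matches the paper's treatment: the paper offers no independent proof, deducing the statement directly from \cite[Theorem 2.4]{PR} via the PCD criterion of Theorem \ref{thm-pcd-cri}, exactly as you do, and your reduction to an isogeny fixing a point is harmless bookkeeping. (Your spreading-out paragraph is not needed---and as written its closed fibres over a finitely generated $\mathbb{Z}$-algebra would live in positive characteristic---but since \cite[Theorem 2.4]{PR} is applied over the given characteristic-zero algebraically closed field, this aside does not affect the argument.)
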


Next, we consider the restriction, lifting and descending problems.
\begin{lemma}\label{lem-pcd-res}
Let $f:A\to A$ be a PCD surjective endomorphism of an abelian variety $A$.
Let $B$ be an (irreducible) closed subvariety of $A$ such that $f(B)=B$.
Then $f|_B$ is also PCD.
\end{lemma}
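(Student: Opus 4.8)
The plan is to combine Theorem \ref{thm-pcd-PR} with Lemma \ref{lem-pcd+a}, reducing to the case where $B$ is an abelian subvariety and $f|_B$ is an isogeny, and then exploit an $f$-equivariant quotient structure on $A$. First, by Theorem \ref{thm-pcd-PR}, since $f$ is PCD and $f(B)=B$ with $B$ irreducible, $B$ is an abelian subvariety of $A$, that is, a translate of an abelian subvariety $B_0\le A$ passing through the origin; after translating coordinates on $A$ (which does not affect the PCD property by Lemma \ref{lem-pcd+a}) we may assume $B=B_0$ is an abelian subvariety containing $0$. Write $f=g+a$ with $g$ an isogeny and $a\in A$; since $f(B)=B$ and $0\in B$ we get $a=f(0)\in B$, so $g(B)=f(B)-a=B-a=B$ as well, and $g|_B$ is an isogeny of $B$. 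By Lemma \ref{lem-pcd+a} it suffices to treat $g$; equivalently, we may assume from the start that $f$ is an isogeny of $A$ with $f(B)=B$.

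Next I would pass to the quotient. Set $C:=A/B$ and let $\pi:A\to C$ be the quotient homomorphism; since $f(B)=B$ and $f$ is a homomorphism, $f$ descends to a surjective endomorphism $h:C\to C$ with $\pi\circ f=h\circ\pi$. By Lemma \ref{lem-fin-lift-des} applied to... no — $\pi$ is not finite, so instead I use Theorem \ref{thm-pcd-cri} via the cohomological criterion. We have the $f^*$-equivariant exact sequence of $\OO$-cohomology
$$0\to H^1(C,\mathcal{O}_C)\to H^1(A,\mathcal{O}_A)\to H^1(B,\mathcal{O}_B)\to 0,$$
so the eigenvalues of $f^*|_{H^1(A,\mathcal{O}_A)}$ are the union (with multiplicity) of those of $h^*|_{H^1(C,\mathcal{O}_C)}$ and those of $(f|_B)^*|_{H^1(B,\mathcal{O}_B)}$. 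Since $f$ is PCD, by Theorem \ref{thm-pcd-cri} no eigenvalue of $f^*|_{H^1(A,\mathcal{O}_A)}$ is a root of unity; hence in particular no eigenvalue of $(f|_B)^*|_{H^1(B,\mathcal{O}_B)}$ is a root of unity, and Theorem \ref{thm-pcd-cri} (applied to the abelian variety $B$) gives that $f|_B$ is PCD.

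The main obstacle is the very first reduction: invoking Theorem \ref{thm-pcd-PR} to know that the $f$-invariant subvariety $B$ is automatically an abelian subvariety, and then correctly handling the translation so that the short exact sequence of Albanese/cohomology is genuinely $f^*$-equivariant (which requires $B$ to pass through the origin and $f$ to be a homomorphism, not merely an isogeny plus a translation). Once one is in the homomorphism setting the rest is a routine application of the cohomological PCD criterion, so no delicate estimates are needed; alternatively, if one prefers a more geometric route, one can avoid cohomology entirely by noting $\Per(f|_B)=\Per(f)\cap B$ and using Proposition \ref{prop-ext}(3) together with the fact that $\Fix((f|_B)^i)=\Fix(f^i)\cap B$ is finite, but establishing Zariski density of $\Per(f|_B)$ in $B$ still seems to need that $B$ is abelian and that $f|_B$ is (up to translation and power) an isogeny, as in Lemma \ref{lem-iso-pd}.
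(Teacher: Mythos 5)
Your proof is correct and non-circular, and its first half coincides with the paper's argument: invoke Theorem \ref{thm-pcd-PR} to see that $B$ is (a translate of) an abelian subvariety, translate so that $0\in B$, write $f=g+a$ with $g$ an isogeny, note $a=f(0)\in B$ and $g(B)=B$, and use Lemma \ref{lem-pcd+a} to reduce to the isogeny $g$. Where you genuinely diverge is the concluding step: you pass to $C=A/B$ and use the $f^*$-equivariant exact sequence $0\to H^1(C,\mathcal{O}_C)\to H^1(A,\mathcal{O}_A)\to H^1(B,\mathcal{O}_B)\to 0$ (in fact only the equivariant surjectivity of the restriction $H^1(A,\mathcal{O}_A)\to H^1(B,\mathcal{O}_B)$, dual to the surjection $A^{\vee}\to B^{\vee}$, is needed) to see that every eigenvalue of $(g|_B)^*$ on $H^1(B,\mathcal{O}_B)$ occurs among those of $g^*$ on $H^1(A,\mathcal{O}_A)$, and then apply the root-of-unity criterion of Theorem \ref{thm-pcd-cri} twice; this is legitimate since Theorem \ref{thm-pcd-cri} is proved before this lemma and independently of it. The paper instead finishes geometrically, exactly along the lines of the alternative you sketch at the end: since $g|_B$ is an isogeny of the abelian variety $B$, Lemma \ref{lem-iso-pd} gives Zariski density of $\Per(g|_B)$, while $\Fix((g|_B)^i)\subseteq\Fix(g^i)$ is finite by Proposition \ref{prop-ext}(3) because $g$ is PCD, so $g|_B$ is PCD by Proposition \ref{prop-ext}(3), and Lemma \ref{lem-pcd+a} (applied on $B$, with $a\in B$) recovers $f|_B$. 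The trade-off is that your route leans on the cohomological criterion (hence on \cite[Proposition 2.5]{KR}) and on the equivariant $H^1(\mathcal{O})$ sequence, whereas the paper's route avoids the quotient and all cohomology, using only density of periodic points of isogenies; both are complete proofs.
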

\begin{proof}
By Theorem \ref{thm-pcd-PR}, $B$ is an abelian variety and we may assume that $B$ is a subgroup of $A$.
Denote by $t:=f(0)\in B$ and $g:=f-t$.
Then $g(B)=f(B)-t=B-t=B$ and $g(0)=0$.
Since $f$ is PCD, so is $g$ by Lemma \ref{lem-pcd+a}. 
By Proposition \ref{prop-ext}, $\Fix(g^i|_B)$ is finite for each $i>0$ and hence $g|_B$ is PCD by Lemma \ref{lem-iso-pd}. 
So $f|_B$ is PCD by Lemma \ref{lem-pcd+a} again.
\end{proof}

\begin{lemma}\label{lem-pcd-des}
Let $\pi:A\to B$ be a surjective morphism of abelian varieties.
Let $f:A\to A$ and $g:B\to B$ be surjective endomorphisms such that $\pi\circ f=g\circ \pi$.
Suppose $f$ is PCD.
Then so is $g$.
\end{lemma}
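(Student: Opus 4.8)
The plan is to reduce to the case where everything is an isogeny and to use the analogous restriction result together with the cohomological criterion. First I would write $f = f_0 + a$ with $f_0$ an isogeny and $a \in A$; by Lemma \ref{lem-pcd+a} we may replace $f$ by $f_0$, and correspondingly $g$ gets replaced by $g_0 := g - \pi(a)$, so it suffices to treat the case $f(0) = 0$, hence $g(0) = 0$, i.e.\ both are isogenies (surjectivity of $g$ is automatic from surjectivity of $\pi \circ f$). Since $f$ is a PCD isogeny, Theorem \ref{thm-pcd-cri} (equivalently \cite[Proposition 2.5]{KR}) tells us no eigenvalue of $f^*|_{H^1(A,\mathcal{O}_A)}$ is a root of unity. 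Now $\pi$ induces a surjection $\pi^* : H^1(B,\mathcal{O}_B) \hookrightarrow H^1(A,\mathcal{O}_A)$ which is injective (as $\pi$ is a surjective homomorphism of abelian varieties, $\pi^*$ is injective on $H^1(-,\mathcal{O})$), and it is equivariant: $\pi^* \circ g^* = f^* \circ \pi^*$. Hence $g^*|_{H^1(B,\mathcal{O}_B)}$ is (conjugate to) the restriction of $f^*|_{H^1(A,\mathcal{O}_A)}$ to an invariant subspace, so its eigenvalues form a subset of the eigenvalues of $f^*|_{H^1(A,\mathcal{O}_A)}$; in particular none of them is a root of unity. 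Applying Theorem \ref{thm-pcd-cri} in the other direction, $g$ is PCD.

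An alternative, more geometric route avoids the linear algebra: by Theorem \ref{thm-pcd-PR} the Zariski closure of any $f$-periodic orbit that lies over a fixed point is an abelian subvariety, but the cleaner argument is simply to intersect with fibres. Indeed, $\ker(\pi)^\circ =: C$ is an abelian subvariety with $f(C) = C$ (it is the connected component of $\ker(\pi)$ and $f$ commutes with $\pi$ up to translation, which after our reduction means $f(C) \subseteq C$, hence equality), so by Lemma \ref{lem-pcd-res} the restriction $f|_C$ is PCD; then one checks that $\Per(f)$ surjecting onto $A$ forces $\Per(g) = \pi(\Per(f))$ to be Zariski dense in $B$, while finiteness of $\Fix(g^i)$ follows because $\Fix(f^i)$ is finite and the fibres of $\pi|_{\Fix(f^i)}$ are contained in $\Fix(f^i|_{\text{fibre}})$, which is nonempty hence (by finiteness of $\Fix(f^i)$) a single point; by Proposition \ref{prop-ext}(3) this gives $g$ PCD. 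I expect the first route to be shortest to write down.

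The main obstacle is a bookkeeping point rather than a deep one: making sure the translation reductions are consistent on both $A$ and $B$ simultaneously. Writing $f = f_0 + a$ does not immediately give $g = g_0 + (\text{something canonical})$ unless one checks that $\pi \circ f_0 = g_0 \circ \pi$ for the right choice of $g_0$; this forces $g_0 := g - \pi(a)$ and one must verify $g_0(0) = 0$, which uses $g(\pi(0)) = \pi(f(0)) = \pi(a)$. Once this is pinned down, the equivariance $\pi^* g^* = f^* \pi^*$ on $H^1(-,\mathcal{O})$ and the injectivity of $\pi^*$ there are standard facts about abelian varieties, and the conclusion is immediate from Theorem \ref{thm-pcd-cri}. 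A secondary subtlety is justifying that $g$ is genuinely surjective to begin with, but this is forced by surjectivity of $\pi$ and of $f$ together with $\pi \circ f = g \circ \pi$.
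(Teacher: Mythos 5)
Your main (cohomological) argument is correct, but it takes a genuinely different route from the paper. The paper's proof is geometric: after reducing to isogenies and to an uncountable base field, it notes $\Per(g)=\pi(\Per(f))$ is dense, and rules out an infinite $\Fix(g^n)$ by applying the restriction Lemma \ref{lem-pcd-res} to (a component of) the fibre over each of the uncountably many points $y\in\Fix(g^n)$: each such restriction is PCD, hence has a periodic point, giving uncountably many points of $\Per(f)$, contradicting PCD. You instead quote Theorem \ref{thm-pcd-cri} twice, together with the equivariance $\pi^*\circ g^*=f^*\circ\pi^*$ and injectivity of $\pi^*$ on $H^1(-,\mathcal{O})$, so that the eigenvalues of $g^*|_{H^1(B,\mathcal{O}_B)}$ form a subset of those of $f^*|_{H^1(A,\mathcal{O}_A)}$ and hence contain no root of unity. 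This is legitimate — Theorem \ref{thm-pcd-cri} is established before Lemma \ref{lem-pcd-des} and its proof does not use it, so there is no circularity — and it buys a shorter argument that avoids the uncountable-field and fibre-component bookkeeping. In fact your translation reductions are unnecessary: Theorem \ref{thm-pcd-cri} is stated for arbitrary surjective endomorphisms and translations act trivially on $H^1(-,\mathcal{O})$, so you may apply the eigenvalue comparison to $f$ and $g$ directly (the only fact to cite is that $\pi^*$ is injective on $H^1(-,\mathcal{O})$ for a surjective map of abelian varieties, e.g.\ by factoring through the quotient by the connected kernel and an isogeny). One caution: your ``alternative geometric route'' is closer in spirit to the paper's proof but, as sketched, has a gap — a point of $\Fix(g^i)$ need not lie under a point of $\Fix(f^i)$, so one cannot bound $\Fix(g^i)$ by ``single points in fibres''; the paper circumvents exactly this by producing merely \emph{periodic} points in uncountably many fibres and contradicting countability of $\Per(f)$. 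Since you present the cohomological route as the proof, this does not affect correctness.
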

\begin{proof}
Replacing $f$ by a positive power, we may assume $f$ is an isogeny.
We may also assume $\pi$ is a homomorphism and hence $g$ is an isogeny.
By Proposition \ref{prop-ext}, we may work over an uncountable field.
It is clear that $\Per(g)$ is Zariski dense in $B$.
Suppose $\Fix(g^n)$ is infinite for some $n>0$.
Then $\Fix(g^n)$ is uncountable.
By Lemma \ref{lem-pcd-res}, for each $y\in \Fix(g^n)$, $f^n|_{A_y}$ is PCD where $A_y$ is an irreducible component of $\pi^{-1}(y)$.
In particular, $\Per(f^n|_{A_y})\neq \emptyset$ for each $y\in \Fix(g^n)$.
Note that $\Per(f)\supseteq \bigcup_{y\in \Fix(g^n)}\Per(f^n|_{A_y})$.
Then $\Per(f)$ is uncountable, a contradiction.
\end{proof}

\begin{lemma}\label{lem-ab-ns-res-surj}
Let $i:A\to B$ be an inclusion morphism of abelian varieties.
Then the restriction $i^*:\NS_{\Q}(B)\to \NS_{\Q}(A)$ is surjective.
\end{lemma}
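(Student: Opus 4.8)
The plan is to reduce the statement to the classical description of the Néron--Severi group of an abelian variety in terms of Hermitian forms (or, over a general field, in terms of symmetric homomorphisms to the dual). Recall that for an abelian variety $A$ there is a natural identification of $\NS(A)$ with the group of symmetric homomorphisms $\phi:A\to A^\vee$ (those with $\phi^\vee=\phi$ under the canonical biduality $A\simeq A^{\vee\vee}$), sending the class of a Cartier divisor $L$ to $\phi_L$; this is an isomorphism after tensoring with $\Q$, and one has $\phi_L(a)=T_a^*L-L$ as recalled in the discussion preceding this lemma. Under an inclusion $i:A\hookrightarrow B$ of abelian varieties, the dual map $i^\vee:B^\vee\to A^\vee$ is surjective, and the restriction $i^*:\NS(B)\to\NS(A)$ corresponds on the level of symmetric homomorphisms to $\psi\mapsto i^\vee\circ\psi\circ i$.

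The key step is then to show that every symmetric homomorphism $\phi:A\to A^\vee$ is, up to an integer multiple, of the form $i^\vee\circ\psi\circ i$ for some symmetric $\psi:B\to B^\vee$. First I would choose, using Poincaré complete reducibility (\cite[\S19, Theorem 1]{Mum}), an abelian subvariety $C\subseteq B$ with $A\cap C$ finite and $A+C=B$, so that the addition map $A\times C\to B$ is an isogeny. It suffices to produce the desired $\psi$ on $A\times C$, i.e. to extend a symmetric homomorphism on $A$ to a symmetric homomorphism on $A\times C$: one simply takes the block-diagonal homomorphism $\phi\oplus 0:A\times C\to A^\vee\times C^\vee=(A\times C)^\vee$, which is symmetric and restricts to $\phi$ on the $A$-factor. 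Transporting this through the isogeny $A\times C\to B$ (which identifies $\NS_\Q$ of the two sides, and under which $i$ becomes the inclusion of the $A$-factor up to the finite kernel) yields a symmetric $\psi\in\NS_\Q(B)$ with $i^*\psi=\phi$ in $\NS_\Q(A)$.

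I expect the main obstacle to be bookkeeping: namely checking carefully that the identification $\NS_\Q\simeq\{\text{symmetric homs}\}\otimes\Q$ is compatible with pullback along $i$ (which amounts to the formula $\phi_{i^*L}=i^\vee\circ\phi_L\circ i$, a standard consequence of $\phi_{h^*L}=h^\vee\circ\phi_L\circ h$ applied to $h=i$), and that pulling back along the isogeny $A\times C\to B$ induces an isomorphism on $\NS_\Q$. Both are routine but must be stated precisely. An alternative, if one prefers to avoid the symmetric-homomorphism formalism, is purely cohomological: over $\C$ one has $\NS(A)\otimes\Q\subseteq H^2(A,\Q)=\bigwedge^2 H^1(A,\Q)^\vee$ compatibly with restriction, and an alternating form on $H^1(A,\Q)$ satisfying the Riemann relations on the subspace $H^1(B,\Q)\twoheadleftarrow$ extends (after scaling) to one on $H^1(A,\Q)$ by extending the corresponding polarization-type data on a complementary subtorus; the general field case then follows by a standard specialization/spreading-out argument. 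I would present the symmetric-homomorphism version as it works uniformly in characteristic $0$.
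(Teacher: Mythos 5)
Your proposal is correct and follows essentially the same route as the paper: Poincar\'e complete reducibility gives a complement $C$ with $A\times C\to B$ an isogeny, restriction from the product to the factor $A$ is surjective on $\NS_{\Q}$, and pullback along the isogeny is an isomorphism on $\NS_{\Q}$. The only difference is that the paper dispenses with the symmetric-homomorphism formalism entirely, simply observing that $j^*$ is split by $p_A^*$ and composing $i^*=j^*\circ\pi^*$, so your extra layer of bookkeeping (the $\phi\mapsto\phi_L$ identification and the block form $\phi\oplus 0$) is sound but not needed.
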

\begin{proof}
We may assume $i$ is also a group homomorphism.
By Poincar\'e's complete reducibility theorem (cf.~\cite[\S 19, Theorem 1]{Mum}), there is an abelian subvariety $A'$ of $B$ such that $A\cap A'$ is finite and $A+A'=B$.
Define $\pi: A\times A'\to B$ via $\pi(a,a')=a+a'$.
Then $\pi$ is an isogeny.
Define $j:A\to A\times A'$ via $j(a)=(a,0)$.
Then $\pi\circ j=i$.
Note that $j^*|_{\NS_{\Q}(A\times A')}$ is surjective and $\pi^*|_{\NS_{\Q}(B)}$ is isomorphism.
Then $i^*|_{\NS_{\Q}(B)}$ is surjective.
\end{proof}

\begin{proposition}\label{prop-5lem} Consider the commutative diagram of abelian varieties
$$\xymatrix{
0\ar[r]&A\ar[r]^i\ar[d]^{f} &B\ar[r]^\pi\ar[d]^{g} &C\ar[r]\ar[d]^{h} &0\\
0\ar[r]&A\ar[r]^i &B\ar[r]^\pi &C\ar[r] &0
}$$
where $f, g, h$  are surjective endomorphisms.
Then $g$ is amplified (resp.~PCD) if and only if both are $f$ and $h$.
\end{proposition}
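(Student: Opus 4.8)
The plan is to bypass divisors and periodic points entirely and reduce both assertions to the eigenvalue criteria of Theorems \ref{thm-ab-amp-cri} and \ref{thm-pcd-cri}. The engine is the following observation: dualizing the short exact sequence $0\to A\xrightarrow{i}B\xrightarrow{\pi}C\to 0$ of abelian varieties gives a short exact sequence $0\to C^\vee\to B^\vee\to A^\vee\to 0$, and applying $\operatorname{Lie}$ (which is exact here, the dimensions being additive) together with the functorial identification $H^1(A,\mathcal{O}_A)\cong\operatorname{Lie}(A^\vee)$ yields a short exact sequence
\[ 0\longrightarrow H^1(C,\mathcal{O}_C)\xrightarrow{\pi^*}H^1(B,\mathcal{O}_B)\xrightarrow{i^*}H^1(A,\mathcal{O}_A)\longrightarrow 0. \]

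Next I would check that this sequence is equivariant: from $i\circ f=g\circ i$ and $\pi\circ g=h\circ\pi$ one gets $i^*\circ g^*=f^*\circ i^*$ and $g^*\circ\pi^*=\pi^*\circ h^*$, so $g^*|_{H^1(B,\mathcal{O}_B)}$ preserves the subspace $\pi^*H^1(C,\mathcal{O}_C)$, acts there as $h^*$, and induces $f^*$ on the quotient $H^1(A,\mathcal{O}_A)$. Hence the characteristic polynomial of $g^*|_{H^1(B,\mathcal{O}_B)}$ is the product of those of $f^*|_{H^1(A,\mathcal{O}_A)}$ and $h^*|_{H^1(C,\mathcal{O}_C)}$; equivalently, the multiset of eigenvalues of $g^*$ is the union of the eigenvalues of $f^*$ and of $h^*$.

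The two equivalences now follow at once. By Theorem \ref{thm-ab-amp-cri}, each of $f$, $g$, $h$ is amplified if and only if the corresponding operator $f^*$, $g^*$, $h^*$ on $H^1(\cdot,\mathcal{O})$ has no eigenvalue of modulus $1$; since the eigenvalues of $g^*$ are precisely those of $f^*$ together with those of $h^*$, the condition for $g$ is equivalent to the conjunction of the conditions for $f$ and $h$. Replacing ``modulus $1$'' by ``root of unity'' and Theorem \ref{thm-ab-amp-cri} by Theorem \ref{thm-pcd-cri} gives the PCD statement verbatim.

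The only genuinely nontrivial point is the exactness and $f^*,g^*,h^*$-equivariance of the displayed $H^1$-sequence, so that is where I would concentrate my care — in particular, on the facts that the contravariant functor $(\cdot)^\vee$ is exact on the category of abelian varieties and that $H^1(A,\mathcal{O}_A)\cong\operatorname{Lie}(A^\vee)$ is natural in $A$. As a partial consistency check, the implication ``$g$ PCD $\Rightarrow$ $f,h$ PCD'' also follows directly, since $f=g|_A$ is PCD by Lemma \ref{lem-pcd-res} and $h$ is PCD by Lemma \ref{lem-pcd-des}, and ``$g$ amplified $\Rightarrow f$ amplified'' follows by restricting a witness divisor to $A$ (ampleness being preserved under restriction, using $i^*\circ g^*=f^*\circ i^*$); the virtue of the eigenvalue count is that it handles all remaining implications — in particular the descent of ``amplified'' along $\pi$, which is exactly what Question \ref{que-des} asks — in one stroke.
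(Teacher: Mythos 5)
Your proof is correct, and it takes a genuinely different route from the paper's. The paper treats the two assertions separately and more geometrically: for ``amplified'', the descent along $\pi$ is proved by pulling back a nonzero nef class fixed by $h^*$ and invoking the nef-divisor criterion of Theorem \ref{thm-ab-amp-cri}, while the converse is proved by using surjectivity of restriction on N\'eron--Severi groups (Lemma \ref{lem-ab-ns-res-surj}) to find a $\pi$-ample class in the image of $g^*-\id$ and then adding a large multiple of $\pi^*(h^*E-E)$ via \cite[Proposition 1.45]{KM}; for ``PCD'', one direction is Lemmas \ref{lem-pcd-res} and \ref{lem-pcd-des}, and the converse takes the neutral component $Z$ of an infinite $\Fix(g^n)$, observes $\pi(Z)=0$ so $Z\le i(A)$, and contradicts finiteness of $\Fix(f^n)$ (using Proposition \ref{prop-ext} and Lemma \ref{lem-iso-pd}). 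You instead reduce both statements at once to the eigenvalue criteria (Theorems \ref{thm-ab-amp-cri} and \ref{thm-pcd-cri}) through the $g^*$-stable short exact sequence on $H^1(\cdot,\mathcal{O})$; the equivariance is immediate from functoriality of pullback, and the exactness is the standard fact that dualization is exact on short exact sequences of abelian varieties combined with exactness of $\operatorname{Lie}$ in characteristic $0$ and the natural identification $H^1(A,\mathcal{O}_A)\cong\operatorname{Lie}(A^\vee)$ (over $\mathbb{C}$ one can equally read it off from the Hodge-theoretic description). Since neither Theorem \ref{thm-ab-amp-cri} nor Theorem \ref{thm-pcd-cri} depends on this proposition, there is no circularity. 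What each approach buys: yours is more uniform, needs no reduction to isogenies via Lemma \ref{lem-pcd+a}, handles the descent along $\pi$ (the point of Question \ref{que-des} in this setting) for free, and would also yield Proposition \ref{prop-dual} directly from the same eigenvalue count; the paper's argument stays within the cone and fixed-point techniques it has already set up and avoids the extra (standard but external) input on duality. One minor caveat, which applies equally to the paper's own uses of these criteria: their proofs are Hodge-theoretic, so over a general algebraically closed field of characteristic $0$ one should add the routine base-change remark (Proposition \ref{prop-ext} together with invariance of $H^1(\cdot,\mathcal{O})$ under extension of the base field); this is not a gap specific to your argument.
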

\begin{proof}
We may assume that $f, g, h$ are isogenies (cf.~Lemma \ref{lem-pcd+a}).

Suppose $g$ is amplified.
Clearly, $f$ is amplified.
Suppose $h$ is not amplified.
By Theorem \ref{thm-ab-amp-cri}, $h^*D\equiv D$ for some $D\in \Nef(C)\backslash\{0\}$.
Then $g^*\pi^*D\equiv \pi^*D$ and hence $g$ is not amplified by Theorem \ref{thm-ab-amp-cri} again.
So we get a contradiction.

Suppose $f$ and $h$ are amplified.
Let $V$ be the space of the image of $g^*|_{\NS_{\Q}(B)}-\id$.
By Lemma \ref{lem-ab-ns-res-surj}, $i^*D$ is ample for some $D\in V$, i.e., $D$ is $\pi$-ample.
Suppose $h^*E-E$ is ample for some $E\in \NS_{\Q}(C)$. 
Let $F:=g^*(\pi^*E)-\pi^*E=\pi^*(h^*E-E)\in V$.
By \cite[Proposition 1.45]{KM}, $nF+D\in V$ is ample for $n\gg 1$.

Suppose $g$ is PCD.
Then both are $f$ and $h$ by Lemmas \ref{lem-pcd-res} and \ref{lem-pcd-des}.
Suppose $f$ and $h$ are PCD and $g$ is not PCD.
By Proposition \ref{prop-ext} and Lemma \ref{lem-iso-pd}, $\Fix(g^n)$ is infinite for some $n>0$.
Let $Z$ be the neutral component of $\Fix(g^n)$.
Then $Z$ is an abelian variety of positive dimension and $\pi(Z)=0$.
Therefore, $Z\le i(A)$ and $i^{-1}(Z)\subseteq \Fix(f^n)$, a contradiction.
\end{proof}

\begin{proposition}\label{prop-dual}
Let $f:A\to A$ be a surjective endomorphism of an abelian variety $A$.
Let $f^{\vee}:A^{\vee}\to A^{\vee}$ be the dual endomorphism.
Then $f$ is amplified (resp.~PCD) if and only if so is $f^{\vee}$.
\end{proposition}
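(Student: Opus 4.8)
The plan is to reduce the assertion to the eigenvalue criteria of Theorems \ref{thm-ab-amp-cri} and \ref{thm-pcd-cri}, and for that to compare the spectrum of $f^*|_{H^1(A,\mathcal{O}_A)}$ with the spectrum of $(f^{\vee})^*|_{H^1(A^{\vee},\mathcal{O}_{A^{\vee}})}$. First, by Proposition \ref{prop-ext}(2) and (4), and since the formation of the dual abelian variety and of the dual endomorphism commutes with extension of the base field, I may assume $k=\mathbb{C}$. Then I would write $A=V/\Lambda$ with $V$ a complex vector space of dimension $g=\dim A$, and let $F\in \operatorname{End}_{\mathbb{C}}(V)$ with $F(\Lambda)\subseteq\Lambda$ be the linear map lifting $f$, with eigenvalues $\mu_1,\dots,\mu_g$ counted with multiplicity.

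Next I would identify the two spectra. Under $H^0(A,\Omega^1_A)=V^{*}$ the map $f^*$ is the transpose of $F$, hence has eigenvalues $\mu_1,\dots,\mu_g$; since the Hodge decomposition $H^1(A,\mathbb{C})=H^0(A,\Omega^1_A)\oplus H^1(A,\mathcal{O}_A)$ is $f^*$-stable and $H^1(A,\mathcal{O}_A)=\overline{H^0(A,\Omega^1_A)}$, the eigenvalues of $f^*|_{H^1(A,\mathcal{O}_A)}$ are $\overline{\mu_1},\dots,\overline{\mu_g}$. For the dual side I would use that $A^{\vee}=\Pic^0_A$ satisfies $\operatorname{Lie}(A^{\vee})=H^1(A,\mathcal{O}_A)$ with $\operatorname{Lie}(f^{\vee})=f^*|_{H^1(A,\mathcal{O}_A)}$, together with biduality $(A^{\vee})^{\vee}=A$ and $(f^{\vee})^{\vee}=f$; this gives $(f^{\vee})^*|_{H^1(A^{\vee},\mathcal{O}_{A^{\vee}})}=\operatorname{Lie}\big((f^{\vee})^{\vee}\big)=\operatorname{Lie}(f)=df_0$, whose eigenvalues are $\mu_1,\dots,\mu_g$. (Alternatively one may compute directly from the uniformization $A^{\vee}=\overline{V}^{*}/\Lambda^{\vee}$, or invoke the classical equality of the characteristic polynomials of $f$ and $f^{\vee}$ acting on $H^1(\cdot,\mathbb{Z})$, to the same effect.)

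The conclusion is then immediate: the eigenvalue multiset of $f^*|_{H^1(A,\mathcal{O}_A)}$ is $\{\overline{\mu_i}\}_{i=1}^{g}$ and that of $(f^{\vee})^*|_{H^1(A^{\vee},\mathcal{O}_{A^{\vee}})}$ is $\{\mu_i\}_{i=1}^{g}$, and these are complex conjugates of one another. Since complex conjugation preserves absolute values, one of the two multisets contains an element of modulus $1$ if and only if the other does, so $f$ is amplified if and only if $f^{\vee}$ is by Theorem \ref{thm-ab-amp-cri}; and since the conjugate of a root of unity is again a root of unity, one of them contains a root of unity if and only if the other does, so $f$ is PCD if and only if $f^{\vee}$ is by Theorem \ref{thm-pcd-cri}. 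The only point that needs care is the bookkeeping of complex conjugations in the analytic model of $(A^{\vee},f^{\vee})$ (equivalently, the functoriality $\operatorname{Lie}(f^{\vee})=f^*|_{H^1(A,\mathcal{O}_A)}$ combined with biduality); once the two spectra are seen to be complex conjugate, no further difficulty remains.
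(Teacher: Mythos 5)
Your proof is correct, but it takes a genuinely different route from the paper. The paper argues by induction on $\dim A$: it reduces to isogenies via Lemma \ref{lem-pcd+a}, and when $f^*L\equiv L$ for some $L\not\equiv 0$ it splits into two cases — if $K(L)$ is finite it transfers an amplifying class through $\phi_L=f^{\vee}\circ\phi_L\circ f$ (and handles PCD via the neutral component of $\Fix(f^n)$ and Lemma \ref{lem-pcd-des}), while if $K(L)$ is positive dimensional it dualizes the short exact sequence coming from the neutral component of $K(L)$ and concludes by induction together with Proposition \ref{prop-5lem}. You instead reduce the whole statement to the spectral criteria already established in Theorems \ref{thm-ab-amp-cri} and \ref{thm-pcd-cri} (which precede the proposition, so there is no circularity) and to the classical fact that, over $\mathbb{C}$, the eigenvalue multisets of $f^*|_{H^1(A,\mathcal{O}_A)}$ and $(f^{\vee})^*|_{H^1(A^{\vee},\mathcal{O}_{A^{\vee}})}$ are complex conjugate, via $\operatorname{Lie}(f^{\vee})=f^*|_{H^1(A,\mathcal{O}_A)}$ and biduality. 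This is shorter and makes the duality symmetry of the two criteria transparent, at the cost of transcendental input and two small points you should make explicit: the reduction to $k=\mathbb{C}$ is not literally covered by Proposition \ref{prop-ext}(2),(4) alone — you must first spread out to a finitely generated field of definition and embed it in $\mathbb{C}$, exactly as in the proof of Lemma \ref{lem-pcd-pos}; and for a surjective endomorphism that is not an isogeny the lift to $V$ is affine rather than linear, so either reduce to the isogeny case by Lemma \ref{lem-pcd+a} (as the paper does) or note that the translation part acts trivially on $H^1(A,\mathcal{O}_A)$ and on $\Pic^0(A)$, so it affects neither side of the comparison. With those remarks added, your argument is complete; the paper's longer inductive proof, by contrast, stays within the algebraic machinery ($\phi_L$, exact sequences, Proposition \ref{prop-5lem}) and does not invoke the uniformization.
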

\begin{proof}
We may assume $f$ is an isogeny (cf.~Lemma \ref{lem-pcd+a}) and prove by induction on $\dim(A)$.
If $\dim(A)=0$, it is trivial.
If $\dim(A)=1$, $f$ is amplified (resp.~PCD) if and only if $\deg f>1$.
Note that $\deg f=\deg f^{\vee}$. So we are done.

Suppose $\dim(A)>1$. Since $f^{\vee\vee}=f$, it suffices for us to show that if $f^{\vee}$ is amplified (resp.~PCD), then so is $f$.
If $f^*|_{\N^1(A)}-\id$ is surjective, then $f$ is amplified (resp.~PCD) and there is nothing to prove.
Suppose now that $f^*L\equiv L$ for some Cartier divisor $L\not\equiv 0$.

Suppose $K(L)$ is finite.
Note that $\phi_L=f^{\vee}\circ \phi_L\circ f$.
If $H=(f^{\vee})^*D-D$ is ample, then $f^*(-\phi_L^*((f^{\vee})^*D))-(-\phi_L^*((f^{\vee})^*D))=\phi_L^*H$ is ample and hence $f$ is amplified.
If $f$ is not PCD, then $\Fix(f^n)$ is infinite for some $n>0$ by Proposition \ref{prop-ext} and Lemma \ref{lem-iso-pd}.
Let $Z$ be the neutral component of $\Fix(f^n)$ and denote by $i:Z\to A$ be the inclusion map.
Then $\dim(Z)>0$ and $f^n|_Z=\id_Z$.
Note that $i^{\vee}:A^{\vee}\to Z^{\vee}$ is surjective and $(f^\vee)^n|_{Z^{\vee}}=\id_Z^{\vee}=\id_{Z^{\vee}}$.
Then $f^{\vee}$ is not PCD by Lemma \ref{lem-pcd-des}.

Suppose $\dim(K(L))>0$.
Let $A_1$ be the neutral component of $K(L)$.
Then $\dim(A_1)>0$, $f(A_1)=A_1$ and we have the commutative diagram
$$\xymatrix{
0\ar[r]&A_1\ar[r]^i\ar[d]^{g} &A\ar[r]^\pi\ar[d]^{f} &A_2\ar[r]\ar[d]^{h} &0\\
0\ar[r]&A_1\ar[r]^i &A\ar[r]^\pi &A_2\ar[r] &0
}$$
where $A_2=A/A_1$ and $\dim(A_2)>0$ since $L\not\equiv 0$.
Taking the dual of the diagram, we have
$$\xymatrix{
0&A_1^{\vee}\ar[l] &A^{\vee}\ar[l]_{i^{\vee}} &A_2^{\vee}\ar[l]_{\pi^{\vee}} &0\ar[l]\\
0&A_1^{\vee}\ar[l]\ar[u]_{g^{\vee}} &A^{\vee}\ar[l]_{i^{\vee}}\ar[u]_{f^{\vee}}&A_2^{\vee}\ar[l]_{\pi^{\vee}}\ar[u]_{h^{\vee}} &0\ar[l]
}$$
Suppose $f^{\vee}$ is amplified (resp.~PCD).
Then so are $g^{\vee}$ and $h^{\vee}$.
By induction, so are $g=g^{\vee\vee}$ and $h$.
By Proposition \ref{prop-5lem}, so is $f$.
\end{proof}

Finally, we are able to give another criterion of PCD endomorphisms.
\begin{theorem}\label{thm-pcd-cri2}
Let $f:A\to A$ be a surjective endomorphism of an abelian variety $A$.
Then $f$ is PCD if and only if $f^*D\not\equiv D$ for any nef Cartier divisor $D\not\equiv 0$.
\end{theorem}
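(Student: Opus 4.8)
The plan is to prove the two implications separately. In each one I pass to the contrapositive and, writing $f = g + a$ with $g$ an isogeny and using Lemma \ref{lem-pcd+a} (so $f$ is PCD iff $g$ is) together with the triviality of $T_a^*$ on $\NS(A)$ (which gives $f^*D \equiv g^*D$ for every $D$), reduce to the case that $f$ itself is an isogeny. I also record that the property ``$f^*D \not\equiv D$ for every nef Cartier divisor $D \not\equiv 0$'' is invariant under replacing $f$ by a positive power: if $(f^n)^*D \equiv D$, then $D' := \sum_{i=0}^{n-1}(f^i)^*D$ is again nef, Cartier, $\not\equiv 0$ and fixed by $f^*$, and the converse is trivial; this matches the power-invariance of PCD from Proposition \ref{prop-ext}(5). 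Throughout, ``$f$ is not PCD'' is replaced, via Theorem \ref{thm-pcd-cri}, by ``some eigenvalue of $f^*|_{H^1(A,\mathcal{O}_A)}$ is a root of unity''.

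For the forward implication I argue the contrapositive: given an isogeny $f$ with $f^*D \equiv D$ for a nef Cartier divisor $D \not\equiv 0$, I descend to a polarized situation. Let $A_1$ be the neutral component of $K(D) := \ker\phi_D$ and $\pi : A \to A_2 := A/A_1$, with $A_2 \neq 0$ because $D \not\equiv 0$. The argument in the proof of Theorem \ref{thm-qa-a} applies: from $(f|_{A_1})^*(D|_{f(A_1)}) = (f^*D)|_{A_1} \equiv D|_{A_1} \equiv 0$ and injectivity of $(f|_{A_1})^*$ one gets $f(A_1) \le A_1$, hence $f(A_1) = A_1$, so $f$ descends to an isogeny $f_2$ of $A_2$. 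By the standard theory of abelian varieties, $D$ descends numerically to a Cartier divisor $D_2$ on $A_2$ with $f_2^*D_2 \equiv D_2$; it is nef (as $D$ is and $\pi$ is surjective), and $K(D_2)$ is finite because $\ker\pi^\vee = 0$ forces $\pi^{-1}(K(D_2)) = K(D)$, so $\dim K(D_2) = \dim K(D) - \dim A_1 = 0$. A nef divisor with finite $K(\cdot)$ on an abelian variety is ample: $D_2^{\dim A_2} = (\dim A_2)!\,\chi(D_2)$ is $\ge 0$ by nefness and $\ne 0$ by non-degeneracy, so $D_2$ is big, hence ample since $\Bigc(A_2) = \Amp(A_2)$ (because $\PE^1(A_2) = \Nef(A_2)$). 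Now $\phi_{D_2}$ is an isogeny and $f_2^*D_2 \equiv D_2$ gives $f_2^\vee \circ \phi_{D_2} \circ f_2 = \phi_{D_2}$, i.e.\ $f_2$ is unitary for the Rosati involution of the polarization $D_2$; equivalently $f_2$ preserves the positive-definite Hermitian form of $D_2$, so every eigenvalue of $f_2$ on $H^1(A_2,\mathbb{Z})$ has modulus $1$ and, being an algebraic integer all of whose conjugates have modulus $1$, is a root of unity by Kronecker's theorem. Since $A_2 \neq 0$ and $\pi^* : H^1(A_2,\mathcal{O}_{A_2}) \hookrightarrow H^1(A,\mathcal{O}_A)$ is $f$-equivariant, $f^*|_{H^1(A,\mathcal{O}_A)}$ has a root-of-unity eigenvalue, so $f$ is not PCD by Theorem \ref{thm-pcd-cri}.

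For the reverse implication I again argue the contrapositive: let $f$ be an isogeny that is not PCD. Then some eigenvalue of $f^*|_{H^1(A,\mathcal{O}_A)}$ is a root of unity, so after replacing $f$ by a suitable positive power $1$ is an eigenvalue of $f^*$ on $H^1(A,\mathbb{Q})$; hence $f - \id_A$ is not an isogeny and $A_1 := (f - \id_A)(A)$ is a proper abelian subvariety of $A$. It is $f$-stable since $f(A_1) = (f - \id_A)(f(A)) = (f - \id_A)(A) = A_1$, so $f$ descends to an endomorphism $f_C$ of $C := A/A_1$, and $f_C = \id_C$ because $(f_C - \id_C)(\bar x) = \overline{(f - \id_A)(x)} = 0$. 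As $C$ is positive-dimensional, for $\pi : A \to C$ the quotient and $H$ any ample Cartier divisor on $C$ the divisor $D := \pi^*H$ is nef, Cartier, $\not\equiv 0$, and $f^*D = \pi^*f_C^*H = \pi^*H = D$; undoing the power reduction produces the required divisor for the original $f$.

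I expect the forward (necessity) direction to be the main point. The subtlety is that one is only handed an \emph{integral} invariant nef class, and an invariant nef $\mathbb{R}$-class would merely force an eigenvalue of modulus $1$ rather than a root of unity --- as the Salem-polynomial automorphisms of Example \ref{exa-salem} show --- which is exactly why ``PCD'' is strictly weaker than ``amplified'' (compare Theorem \ref{thm-ab-amp-cri}(6)). The integrality is exploited only through the passage to the quotient $A_2$ on which $D$ becomes non-degenerate, hence ample; there, Rosati-unitarity together with Kronecker's theorem upgrades ``modulus $1$'' to ``root of unity''. The technical heart of the argument is thus the careful verification that $D$ descends to an ample divisor on $A_2$: the factorization of $\phi_D$ through $A_2$, the nefness and non-degeneracy of the descended $D_2$, and the identification of ``nef and non-degenerate'' with ``ample'' on an abelian variety.
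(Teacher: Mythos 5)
Your proposal is correct, but it takes a genuinely different route from the paper in both directions. For the direction ``not PCD $\Rightarrow$ an invariant nef class exists'', the paper goes through duality: it first proves that PCD is preserved under dualization (Proposition \ref{prop-dual}), deduces from Proposition \ref{prop-ext} and Lemma \ref{lem-iso-pd} that $\Fix((f^{\vee})^n)$ is infinite, dualizes a pointwise-fixed abelian subvariety of $A^{\vee}$ to get an $f^n$-equivariant quotient $p:A\to B$ with $f^n|_B=\id_B$, and then averages $p^*H$; you instead use Theorem \ref{thm-pcd-cri} to produce a root-of-unity eigenvalue, pass to a power so that $f^n-\id_A$ is not an isogeny, and take the quotient by $(f^n-\id_A)(A)$ directly, which avoids the dual abelian variety entirely. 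For the direction ``invariant nef class $\Rightarrow$ not PCD'', the paper argues by induction on $\dim A$: it makes $D$ basepoint free (citing the basepoint-free theorem of Martinelli--Nakamura--Witaszek), factors $\varphi_{|D|}$ through $A/B$ with $B$ the neutral component of $K(D)$, descends PCD via Lemma \ref{lem-pcd-des}, and settles the ample case by null entropy plus Lemma \ref{lem-pcd-pos}; you perform a single quotient by the neutral component of $K(D)$, descend $D$ numerically to a non-degenerate, hence ample, class $D_2$, and then use Rosati-unitarity of $f_2$ with respect to $D_2$ together with Kronecker's theorem and Theorem \ref{thm-pcd-cri} to conclude. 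Your route trades the induction, the basepoint-free input and Lemma \ref{lem-pcd-des} for two standard facts you should make explicit with references: the numerical descent of a class killed on the neutral component of $K(D)$ (e.g.\ via $\NS_{\Q}(A)\cong$ symmetric homomorphisms $A\to A^{\vee}$ in characteristic zero, or by twisting by $\Pic^0$), and the positivity of the Rosati involution (so that unitaries have all eigenvalues of modulus $1$, whose Galois conjugates are again eigenvalues, whence roots of unity); both are fine, and your argument has the merit of isolating exactly where integrality of $D$ upgrades ``modulus $1$'' to ``root of unity'', the point that separates Theorem \ref{thm-pcd-cri2} from Theorem \ref{thm-ab-amp-cri}(6). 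One small citation fix: the argument you use for $f(A_1)=A_1$ is the one in Proposition \ref{prop-ab-end-des}, not in the proof of Theorem \ref{thm-qa-a}, though you spell it out correctly anyway.
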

\begin{proof}
We may assume $f$ is an isogeny by Lemma \ref{lem-pcd+a}.
Suppose $f$ is not PCD.
Then $f^{\vee}$ is not PCD by Proposition \ref{prop-dual}.
By Proposition \ref{prop-ext} and Lemma \ref{lem-iso-pd}, $\Fix((f^{\vee})^n)$ is infinite.
In particular, there is a positive dimensional abelian subvariety $B^{\vee}\xhookrightarrow[]{p^{\vee}} A^{\vee}$ such that $(f^{\vee})^n|_{B^{\vee}}=\id_{B^{\vee}}$.
Taking the dual, we have an $f^n$-equivariant surjective morphism $p:A\to B$ such that $f^n|_B=\id_B$.
Let $H$ be an ample Cartier divisor on $B$ and $D:=\sum\limits_{i=0}^{n-1}(f^i)^*p^*H$.
Then $D\not\equiv 0$ is a nef Cartier divisor and $f^*D\equiv D$.

Suppose $f^*D\equiv D$ for some nef Cartier divisor $D\not\equiv 0$.
We show by induction on $\dim(A)$ that $f$ is not PCD.
Suppose $D$ is ample.
By \cite[Proposition 2.9]{MZ}, $f$ is of null entropy and hence not PCD (cf.~Lemma \ref{lem-pcd-pos} or Theorem \ref{thm-pcd-cri}).

Suppose $D$ is not ample.
Replacing $D$ by a numerically equivalent class and some multiple, we may assume $D$ is effective and basepoint free (cf.~\cite[Proposition 3.10]{MNW}).
Let $\varphi_{|D|}:A\to X$ be the morphism defined by the linear system $|D|$.
Then $D=\varphi_{|D|}^*H$ for some very ample Cartier divisor $H$ of $X$.
Let $B$ be the neutral component of $K(D)$.
We have $f(B)=B$ and $\dim(B)>0$ by \cite[Application 1, page 60]{Mum}.
Note that $D|_{B+a}\equiv 0$ for any $a\in A$.
Then $\varphi_{|D|}(B+a)$ is a point and hence $\varphi_{|D|}$ factors through the natural quotient map $p_1:A\to A/B$ and $p_2:A/B\to X$ by \cite[Lemma 1.15]{Deb}.
Let $D':=p_2^*H$ and $g:=f|_{A/B}$.
Then $D'\not\equiv 0$ is a nef Cartier divisor of $A/B$ and $g^*D'\equiv D'$.
By induction, $g$ is not PCD and hence $f$ is not PCD by Lemma \ref{lem-pcd-des}.
\end{proof}

\section{Proof of Theorems \ref{main-thm-ab3} and \ref{main-thm-des}}
We first prove Theorem \ref{main-thm-ab3}.

\begin{lemma}\label{lem-pcd-fib}
Let $f:A\to A$ be a PCD endomorphism of an abelian variety $A$.
Then there is no dominant rational map $\pi:A\dasharrow \mathbb{P}^1$ such that $\pi\circ f=f$.
\end{lemma}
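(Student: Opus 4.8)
The plan is to argue by contradiction. Suppose $\pi\colon A\dasharrow\mathbb{P}^1$ is dominant with $\pi\circ f=\pi$. I will show that $\pi$ is pulled back from a finite surjective morphism $\bar\pi\colon E\to\mathbb{P}^1$ along an elliptic quotient $q\colon A\to E$, that $f$ descends along $q$ to an endomorphism $g\colon E\to E$ which is again PCD, and that $\bar\pi\circ g=\bar\pi$; a short degree count on the fibres of $\bar\pi$ then forces $\deg g=1$, contradicting the fact that a PCD endomorphism of a curve has positive entropy.

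First I would study the general fibres of $\pi$. Let $U\subseteq A$ be a dense open on which $\pi$ is a morphism and, for general $y\in\mathbb{P}^1$, put $F_y:=\overline{\pi|_U^{-1}(y)}$, a closed set of pure dimension $\dim A-1$. Since $\pi\circ f=\pi$, the (finite) morphism $f$ satisfies $f(F_y)\subseteq F_y$, hence $f(F_y)=F_y$ by comparing dimensions; $f$ permutes the (boundedly many) irreducible components of $F_y$, so after replacing $f$ by a suitable power (allowed by Proposition \ref{prop-ext}, and compatible with $\pi\circ f=\pi$) I may assume $f(C)=C$ for every component $C$ of every general fibre. By Theorem \ref{thm-pcd-PR}, each such $C$ is then a coset of an abelian subvariety $B_C\le A$ of dimension $\dim A-1$. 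If $B_{C_1}\neq B_{C_2}$ for components $C_i$ of $F_{y_1}$, $F_{y_2}$ with $y_1\neq y_2$ general, then $B_{C_1}+B_{C_2}=A$ (their intersection has codimension $\ge 2$), so $C_1\cap C_2\neq\emptyset$, contradicting the disjointness of distinct general fibres. Hence there is a single abelian subvariety $B$ of dimension $\dim A-1$ such that every general fibre of $\pi$ is a union of cosets of $B$.

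Let $E:=A/B$, an elliptic curve, with quotient homomorphism $q\colon A\to E$; since general fibres of $\pi$ are unions of fibres of $q$, the map $\pi$ factors as $\pi=\bar\pi\circ q$ for a dominant rational, hence (as $E$ is a smooth projective curve) finite surjective, morphism $\bar\pi\colon E\to\mathbb{P}^1$. Writing $f=h+c$ with $h$ an isogeny, the identity $f(a+B)=a+B$ on a general coset forces $h(B)=B$, so $f$ descends to a surjective endomorphism $g\colon E\to E$ with $q\circ f=g\circ q$, and $g$ is PCD by Lemma \ref{lem-pcd-des}. Combining $\pi=\bar\pi\circ q$, $q\circ f=g\circ q$ and $\pi\circ f=\pi$ gives $\bar\pi\circ g\circ q=\bar\pi\circ q$, whence $\bar\pi\circ g=\bar\pi$ since $q$ is dominant. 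Now fix $d:=\deg\bar\pi$ and take $p\in\mathbb{P}^1$ general, so that $\bar\pi^{-1}(p)$ is a reduced set of $d$ points. From $\bar\pi\circ g=\bar\pi$ one checks directly that $g^{-1}(\bar\pi^{-1}(p))=\bar\pi^{-1}(p)$; but $g$, differing from an isogeny by a translation, is étale, so $g^{-1}(\bar\pi^{-1}(p))$ consists of $d\cdot\deg g$ points. Therefore $\deg g=1$, so $g$ is of null entropy, contradicting the fact that the PCD endomorphism $g$ of the smooth curve $E$ is of positive entropy (Lemma \ref{lem-pcd-pos}).

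The main obstacle is the structural first part: turning the mere $f$-invariance of the general fibres into the statement that they are all unions of cosets of one and the same abelian subvariety $B$ (this is where Pink--Roessler's Theorem \ref{thm-pcd-PR} enters, and where one must be careful about ``general'' versus ``every'' fibre), and then descending $f$ correctly so that Lemma \ref{lem-pcd-des} can be applied to $E$. Once that scaffolding is in place the final fibre count is immediate and uses nothing about the degree of $\bar\pi$.
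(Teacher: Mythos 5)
There is a genuine gap, and it sits exactly where you flagged the main difficulty: the reduction to a single abelian subvariety $B$. Your argument for it is that if $C_1\subseteq F_{y_1}$ and $C_2\subseteq F_{y_2}$ are cosets of two distinct codimension-one abelian subvarieties $B_{C_1}\neq B_{C_2}$, then $C_1\cap C_2\neq\emptyset$, ``contradicting the disjointness of distinct general fibres''. But $\pi$ is only a rational map, and $F_y$ is by your own definition the \emph{closure} of $\pi|_U^{-1}(y)$: closures of distinct fibres of a rational map are disjoint only on $U$, and can perfectly well meet inside the indeterminacy (base) locus, which has codimension $\ge 2$. Since $B_{C_1}+B_{C_2}=A$ forces $C_1\cap C_2$ to be a translate of $B_{C_1}\cap B_{C_2}$, i.e.\ of codimension exactly $2$, such an intersection is entirely compatible with lying in the base locus, and no contradiction results. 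So the ``single $B$'' statement, and with it the factorization $\pi=\bar\pi\circ q$ through $q\colon A\to A/B$ on which your degree count relies, is not established. (A smaller fixable point: for reducible $F_y$, $f(F_y)\subseteq F_y$ plus a dimension count does not yet give $f(F_y)=F_y$; one should also check $f^{-1}(F_y)\subseteq F_y$, again using that the complement of the domain of definition has codimension $\ge 2$.)

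The paper shows that this whole scaffolding is unnecessary: it works with a \emph{single} general fibre, takes one irreducible component $B$ and arranges, after a power of $f$, that $f^{-1}(B)=B$ (not merely $f(B)=B$). By Theorem \ref{thm-pcd-PR} one may assume $B$ is an abelian subvariety of codimension one, and $f$ descends to the elliptic curve $A/B$; the point is that $f^{-1}(B)=B$ says precisely that the descended isogeny has trivial kernel, hence is a group automorphism of $A/B$, of finite order — contradicting that it is PCD by Proposition \ref{prop-5lem} (or Lemma \ref{lem-pcd-des}). So your final mechanism (a degree-one endomorphism of an elliptic quotient cannot be PCD, e.g.\ via Lemma \ref{lem-pcd-pos}) is the right one and matches the paper, but it should be run on $A/B$ using the $f^{-1}$-invariance of one component, rather than on a factorization of $\pi$, which is where your proof breaks. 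If you do want to keep your route, the ``single $B$'' claim needs a real argument (for instance via the numerical equivalence of the members of the pencil defining $\pi$), not the disjointness of fibre closures.
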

\begin{proof}
Suppose such $\pi$ exists.
By the same argument as in the proof of Theorem \ref{thm-amp-orbit}, we have $f^{-1}(X_y)=X_y$ for some general $y\in \mathbb{P}^1$.
Let $B$ be an irreducible component of $X_y$ which is a prime divisor of $A$.
We may assume $f^{-1}(B)=B$ after replacing $f$ by a positive power.
Then $B$ is an abelian variety by Theorem \ref{thm-pcd-PR}.
We may assume $B$ is a subgroup of $A$.
Then there is an $f$-equivariant fibration $p:A\to A/B$.
Note that $A/B$ is an elliptic curve and $f|_{A/B}$ is an (algebraic group) automorphism and hence has finite order.
However, $f|_{A/B}$ is PCD by Proposition \ref{prop-5lem}, a contradiction.
\end{proof}

\begin{proof}[Proof of Theorem \ref{main-thm-ab3}]
(2) and (3) are equivalent by \cite[Theorem 1.2]{GS}.
(1) implies (3) by Lemma \ref{lem-pcd-fib}.

Suppose $f$ is not PCD. By the same argument as in the first part of the proof of Theorem \ref{thm-pcd-cri2}, for some $n>0$, there is an $f^n$-equivariant surjective morphism $p:A\to B$ such that $\dim(B)>0$ and $f^n|_B=\id_B$.
Note that there always exists a dominant rational map $\tau: B\dasharrow \mathbb{P}^1$.
Denote by $\pi:=\tau\circ p$.
Then $\pi\circ f^n=f^n$ and hence $f^n$ has no Zariski dense orbit.
Suppose $O_f(x):=\{f^i(x)\,|\,i\ge 0\}$ is Zariski dense in $A$ for some $x\in A$.
Then $\overline{O_f(x)}=\overline{\bigcup\limits_{j=0}^{n-1}f^j(O_{f^n}(x))}=\bigcup\limits_{j=0}^{n-1}f^j(\overline{O_{f^n}(x)})$ and hence $\overline{O_{f^n}(x)}=A$, a contradiction. 
So (2) implies (1).
\end{proof}

Let $A$ be an abelian variety equipped with a PCD endomorphism $f$. 
In the next two propositions, we characterize $Y$ and $X$ for an $f$-equivariant surjective morphism $A\to Y$ and an $f$-equivariant finite surjective morphism $X\to A$.
They will be used in the proof of Theorem \ref{main-thm-des}.

\begin{proposition}
Let $\pi:A\to Y$ be a surjective morphism of normal projective varieties with $A$ being an abelian variety.
Let $f:A\to A$ and $g:Y\to Y$ be surjective endomorphisms such that $g\circ \pi=\pi\circ f$.
Suppose $f$ is PCD.
Then replacing $f$ by a positive power, there is an $f$-invariant abelian subvariety $B$ of $A$ such that, via Stein factorization, $\pi$ factors through the natural quotient map $p_1:A\to A/B$ and a finite surjective morphism $p_2:A/B\to Y$.
In particular, $g$ is PCD.
\end{proposition}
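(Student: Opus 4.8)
The plan is to take the Stein factorization of $\pi$, write it $\pi=r\circ q$, show that $q$ is --- after replacing $f$ by a power --- the quotient of $A$ by an $f$-invariant abelian subvariety $B$, and then transport the PCD property along $q$ and $r$. The PCD hypothesis will be used only through Theorem \ref{thm-pcd-PR}, which produces a single fibre of $q$ that is a translate of an abelian subvariety; the passage from that one fibre to the whole map is a rigidity argument.

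So let $\pi=r\circ q$ be the Stein factorization, with $q\colon A\to Z$ satisfying $q_{\ast}\mathcal{O}_A=\mathcal{O}_Z$ (so $Z$ is normal and $q$ has connected fibres) and $r\colon Z\to Y$ finite surjective. Since we are in characteristic $0$, there is a dense open $U\subseteq Z$ over which all fibres of $q$ are irreducible of dimension $d:=\dim A-\dim Z$. As $f$ is PCD, $\Per(f)$ is Zariski dense in $A$ by Proposition \ref{prop-ext}, so $q^{-1}(U)$ contains a periodic point $x$, say $f^{n}(x)=x$. Replacing $f$ by $f^{n}$ (allowed by the statement), put $F:=q^{-1}(q(x))$, an irreducible subvariety of $A$ of dimension $d$ with $x\in F$. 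Because $f$ maps fibres of $\pi$ --- hence connected components of fibres of $\pi$, that is, fibres of $q$ --- to fibres of $q$, and $f(x)=x$, we get $f(F)\subseteq F$; and since $f$ is finite (a surjective endomorphism of an abelian variety is a translation composed with an isogeny), $f(F)$ has the same dimension as $F$, so $f(F)=F$. Theorem \ref{thm-pcd-PR} now applies to $f$ and the irreducible $f$-invariant $F$: it is a translate of an abelian subvariety, and using $x\in F$ we may write $F=x+B$ with $B$ an abelian subvariety of dimension $d$.

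The key step is the rigidity lemma applied to $\Phi\colon B\times A\to Z$, $(b,a)\mapsto q(a+b)$. Since $B$ is complete and $\Phi(B\times\{x\})=q(x+B)=q(F)$ is a single point, $\Phi$ factors through the projection to $A$; evaluating at $b=0$ shows $q(a+b)=q(a)$ for all $a\in A$, $b\in B$, i.e. $q$ is invariant under translation by $B$. Hence $q$ factors as $q=s\circ p_{1}$ through the quotient $p_{1}\colon A\to A/B$, with $s\colon A/B\to Z$ surjective. Now $\dim(A/B)=\dim A-d=\dim Z$, so $s$ is generically finite; pulling back an ample divisor $H$ on $Y$, the divisor $s^{\ast}r^{\ast}H$ on the abelian variety $A/B$ is nef and big, hence ample (a nef and big divisor on an abelian variety is ample), so $s$ contracts no curve and is therefore finite; and $s_{\ast}\mathcal{O}_{A/B}=s_{\ast}(p_{1})_{\ast}\mathcal{O}_A=q_{\ast}\mathcal{O}_A=\mathcal{O}_Z$, so a finite $s$ with $s_{\ast}\mathcal{O}_{A/B}=\mathcal{O}_Z$ must be an isomorphism. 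Thus $Z\cong A/B$, $q$ becomes the quotient map $p_{1}$, and $\pi=p_{2}\circ p_{1}$ with $p_{2}:=r\circ s\colon A/B\to Y$ finite surjective. Finally $B$ is $f$-invariant: writing $f=T_{c}\circ\phi$ with $\phi$ an isogeny and $c=f(0)$, the relations $f(x+B)=x+B$ and $f(x)=x$ force $\phi(B)=B$, so $f$ descends to a surjective endomorphism $\overline{f}$ of $A/B$ with $p_{1}\circ f=\overline{f}\circ p_{1}$ and $p_{2}\circ\overline{f}=g\circ p_{2}$. By Lemma \ref{lem-pcd-des} (applied to $p_{1}$), $\overline{f}$ is PCD, and since $p_{2}$ is finite, Lemma \ref{lem-fin-lift-des} gives that $g$ is PCD --- more precisely, unwinding the replacement of $f$ by $f^{n}$, this shows $g^{n}$ is PCD, hence $g$ is PCD by Proposition \ref{prop-ext}.

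The one genuinely non-formal point is the rigidity step: Theorem \ref{thm-pcd-PR} only tells us that a single, carefully chosen fibre of $q$ is a translate of an abelian subvariety $B$, and it is the rigidity lemma that promotes this to the statement that $q$ is constant on every $B$-coset. Once that is in hand, the identification of $Z$ with $A/B$ (whose only delicate point is ruling out that $s$ contracts a curve, handled via ampleness of big nef divisors on abelian varieties) and the propagation of PCD along $p_{1}$ and $p_{2}$ are routine, using only the descent lemmas already established.
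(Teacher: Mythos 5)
Your proof is correct, and its skeleton coincides with the paper's: Stein factorization, an invariant fibre made into a translate of an abelian subvariety via Theorem \ref{thm-pcd-PR}, identification of the connected part of $\pi$ with the quotient $A\to A/B$, and then descent of PCD via Lemmas \ref{lem-pcd-des} and \ref{lem-fin-lift-des} together with Proposition \ref{prop-ext}(5). The two middle steps, however, are carried out differently. Where you invoke the rigidity lemma on $\Phi\colon B\times A\to Z$, $(b,a)\mapsto q(a+b)$, to see that $q$ is constant on every $B$-coset, the paper uses the abelian-variety mechanism already set up in that section: $\pi^*H|_{B}\equiv 0$ forces $B\le K(\pi^*H)$, hence $\pi^*H|_{B+a}\equiv 0$ for every translate, so $\pi(B+a)$ is a point; both routes are standard and of comparable length, yours being slightly more self-contained, the paper's fitting the $K(L)$ formalism it uses elsewhere. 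Where you show $s\colon A/B\to Z$ is finite via ``pullback of ample is nef and big, and big implies ample on an abelian variety'' and then conclude $s$ is an isomorphism from $s_*\mathcal{O}_{A/B}=\mathcal{O}_Z$, the paper instead observes that $p_2$ is birational and that $A/B$ contains no rational curves (citing \cite[Proposition 1.3]{KM}); your variant avoids any appeal to rational curves and uses only facts the paper itself states about abelian varieties, which is a small gain in elementarity. Two minor remarks: the paper gets the invariant fibre by first arranging (after a power and a choice of origin at a periodic point) that $f$ is an isogeny and taking $B=\pi^{-1}(\pi(0))$, whereas you keep the origin and work with $F=x+B$ through a periodic point $x$; as a result your $B$ satisfies $\phi(B)=B$ for the isogeny part $\phi$ of $f$ (which is all that is needed for $f$ to descend to $A/B$), and the literal statement $f(B)=B$ is recovered simply by translating the origin to $x$, as the paper implicitly does. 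Also, for the record, your claim that a nef and big divisor on an abelian variety is ample is consistent with the paper's own use of $\Nef(A)=\PE^1(A)$.
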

\begin{proof}
Replacing $f$ by a positive power, we may assume $f$ is an isogeny.
Taking the Stein factorization of $\pi$, we may assume $\pi$ has connected fibres by \cite[Lemma 5.2]{CMZ} and Lemma \ref{lem-fin-lift-des}.
Then the general fibre of $\pi$ is irreducible and we may assume $B:=\pi^{-1}(\pi(0))$ is irreducible.
Note that $f(B)=B$ and Theorem \ref{thm-pcd-PR} implies that $B$ is an abelian variety.
Let $H$ be an ample Cartier divisor of $X$.
By the projection formula, $\pi^*H|_{B+a}\equiv 0$.
Then $\pi(B+a)$ is a point.
By \cite[Lemma 1.15]{Deb}, $p_2:A/B\to Y$ via $p_2(\overline{a})=\pi(a)$ is well defined.
Note that $p_2$ is birational and hence an isomorphism since $A/B$ contains no rational curve (cf.~\cite[Proposition 1.3]{KM}).
Note that $f|_{A/B}$ is PCD by Lemma \ref{lem-pcd-des}.
\end{proof}

\begin{proposition}\label{prop-cov-ab}
Let $\pi:X\to A$ be a finite surjective morphism of normal projective varieties with $A$ being an abelian variety.
Let $f:X\to X$ and $g:A\to A$ be surjective endomorphisms such that $g\circ \pi=\pi\circ f$.
Suppose $g$ is PCD.
Then $\pi$ is \'etale and hence $X$ is an abelian variety.
\end{proposition}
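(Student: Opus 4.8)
The plan is to show that $\pi$ is \'etale; once this is known, $X$ is a finite \'etale cover of the abelian variety $A$ and is therefore itself an abelian variety. (By Lemma~\ref{lem-fin-lift-des} the map $f$ is also PCD, but in fact only the hypothesis on $g$ is needed below.)

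The first and crucial step is to prove that the commutative square formed by $f:X\to X$, $g:A\to A$ and two copies of $\pi:X\to A$ is Cartesian, i.e.\ that $f$ is the base change of $g$ along $\pi$. Since the ground field has characteristic $0$, writing $g=g_0+t$ with $g_0$ an isogeny and $t\in A$, both $g_0$ and the translation by $t$ are \'etale, so $g$ is \'etale. Consider the morphism $\phi:X\to X\times_{A,g}A$, $x\mapsto(f(x),\pi(x))$, which is well defined because $\pi\circ f=g\circ\pi$. The first projection $p_1:X\times_{A,g}A\to X$ is the base change of $g$, hence finite \'etale of degree $\deg g$; as $p_1\circ\phi=f$ and $\deg f=\deg g$ (from $\deg\pi\cdot\deg f=\deg g\cdot\deg\pi$), the morphism $\phi$ has degree $1$. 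Moreover $\phi$ is a morphism over $A$ between schemes finite over $A$, so it is finite, and its target $X\times_{A,g}A$ is normal, being \'etale over the normal variety $X$; a finite birational morphism onto a normal variety is an isomorphism, so $\phi$ is an isomorphism. In particular $f$ is obtained from $g$ by base change, and hence $f$ is \'etale.

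Next I would bring in the ramification divisor. By purity of the branch locus, since $A$ is smooth and $X$ is normal, the ramification divisor $R$ of $\pi$ is a well-defined effective divisor, and the Hurwitz formula gives $K_X\sim\pi^*K_A+R=R$, because $K_A\sim 0$ on the abelian variety $A$. Since the square above is Cartesian with $g$ (hence $f$) \'etale, the ramification divisor is compatible with this base change, so $f^*R=R$ as divisors on $X$. Pushing forward, $\pi_*R$ is an effective divisor on $A$, hence nef, and by flat base change of cycles in the Cartesian square one has $g^*(\pi_*R)=\pi_*(f^*R)=\pi_*R$; in particular $g^*(\pi_*R)\equiv\pi_*R$ in $\N^1(A)$.

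Finally, suppose $\pi$ were not \'etale. Then $R\neq 0$, hence $\pi_*R\neq 0$. An effective divisor on an abelian variety is numerically trivial only if it is zero, and Weil divisors on the smooth variety $A$ are Cartier; thus $\pi_*R$ is a nonzero nef Cartier divisor with $g^*(\pi_*R)\equiv\pi_*R$. This contradicts Theorem~\ref{thm-pcd-cri2}, as $g$ is PCD. Therefore $R=0$, $\pi$ is \'etale, and $X$ is an abelian variety.

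The main obstacle is the first step, recognising and verifying that the square is Cartesian; once that is in place the conclusion is a formal consequence of purity of the branch locus, the base-change compatibility of the ramification divisor, flat base change for cycles, and the numerical criterion Theorem~\ref{thm-pcd-cri2}. One could instead avoid the Cartesian-square argument and proceed more hands-on: after replacing $f$ by a power, comparing the branch loci of $\pi\circ f=g\circ\pi$ shows that $g$ permutes, and then fixes, each irreducible component of the branch divisor $D=\pi(R)$, and a bookkeeping of the ramification indices of $f$ and $\pi$ along the components of $\pi^{-1}(D)$ shows that each such component is totally $g$-invariant, hence yields a nonzero nef class on $A$ fixed by $g^*$, again contradicting Theorem~\ref{thm-pcd-cri2}; but the base-change viewpoint is cleaner.
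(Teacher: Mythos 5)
Your first step -- that the commutative square is Cartesian -- is where the argument breaks, and it is not a repairable slip in wording but a false claim under the stated hypotheses. The degree count ``$p_1\circ\phi=f$ and $\deg f=\deg g$, hence $\deg\phi=1$'' tacitly assumes that $\phi$ dominates the fibre product, i.e.\ that $X\times_{A,g}A$ is irreducible; in general it is not, and then $\deg f=\deg\bigl(p_1|_{\overline{\phi(X)}}\bigr)\cdot\deg\phi$ with $\deg\bigl(p_1|_{\overline{\phi(X)}}\bigr)<\deg g$, so nothing forces $\deg\phi=1$. Concretely, take $X=A=E$ an elliptic curve and $\pi=f=g=[2]_E$ (note $g$ is PCD): then $E\times_{E,[2]}E=\{(x,a):2x=2a\}$ has four irreducible components, $\phi(t)=(2t,2t)$ maps $E$ with degree $4$ onto the diagonal component, and the square is not Cartesian. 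Since the conclusions you draw from Cartesian-ness -- that $f$ is \'etale, that $f^*R=R$ by base-change compatibility of the ramification divisor, and the flat-base-change identity $g^*(\pi_*R)=\pi_*(f^*R)$ -- all rest on this step, the main line of your proof collapses; the alternative ``bookkeeping of ramification indices'' you sketch at the end is too vague to substitute for it.

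The facts you need ($R_f=0$ and $f^*R_\pi\sim R_\pi$, followed by $g^*(\pi_*R_\pi)\equiv\pi_*R_\pi$) are true, but the paper obtains them differently: since $K_A\sim 0$, the ramification formula for $\pi$ gives $K_X\sim R_\pi$, and the one for $f$ gives $K_X\sim f^*K_X+R_f$, whence $(f^m)^*R_\pi\sim R_\pi-\sum_{i=0}^{m-1}(f^i)^*R_f$; intersecting with $H^{\dim X-1}$ for an ample $H$ and letting $m\to\infty$ forces $R_f=0$, so $f^*R_\pi\sim R_\pi$. Then, instead of flat base change, the projection formula ($f_*f^*=\deg f$, $g^*g_*=\deg g$, and $\deg f=\deg g$) yields $g^*(\pi_*R_\pi)\equiv\pi_*R_\pi$, and Theorem~\ref{thm-pcd-cri2} together with purity of the branch locus finishes as in your last paragraph. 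So your endgame (nonzero nef class fixed by $g^*$ contradicts Theorem~\ref{thm-pcd-cri2}, then purity) agrees with the paper, but you need to replace the Cartesian-square step by an argument of this kind.
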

\begin{proof}
Let $n:=\dim(X)$ and $d:=\deg f=\deg g$.
By the ramification divisor formula, $K_X\sim \pi^*K_A+R_{\pi}\sim R_{\pi}$ where $R_{\pi}$ is the ramification divisor of $\pi$.
By the ramification divisor formula again, $K_X\sim f^*K_X+R_f$ where $R_f$ is the ramification divisor of $f$.
So $(f^n)^*R_{\pi}\sim R_{\pi}-\sum\limits_{i=0}^{n-1}(f^i)^*R_f$ for any $n>0$.
Let $H$ be an ample Cartier divisor of $X$.
Suppose $R_f\neq 0$.
Note that $(f^i)^*R_{f}\cdot H^{n-1}$ is a positive integer for each $i\ge 0$.
Then $0<(f^n)^*R_{\pi}\cdot H^{n-1}=R_{\pi}\cdot H^{n-1}-(\sum\limits_{i=0}^{n-1}(f^i)^*R_{f})\cdot H^{n-1}<0$ when $n\gg 1$, a contradiction.
Therefore, $f^*R_{\pi}\sim R_{\pi}$ and hence $f_*R_{\pi}\equiv_w d R_{\pi}$ by the projection formula.
Then $g_*\pi_*R_{\pi}\equiv_w \pi_*f_*R_{\pi}\equiv_w d \pi_*R_{\pi}$.
By the projection formula again, $g^*(\pi_*R_{\pi})\equiv \pi_*R_{\pi}$.
Note that $\pi_*R_{\pi}$ is nef and Cartier.
By Theorem \ref{thm-pcd-cri2}, $R_{\pi}=0$.
Since $A$ is smooth, $\pi$ is then \'etale by the purity of branch loci.
Then $X$ is an abelian variety (cf.~\cite[Section 18, Theorem]{Mum}).
\end{proof}

We recall the following useful decomposition result by Nakayama and Zhang and refer to \cite[Definition 2.9]{Na-Zh} for the definition of weak Calabi-Yau varieties.
Here, we recall a fact that for an abelian variety $A$ and a weak Calabi-Yau variety $S$, the Albanese map $\alb_{A\times S}$ is just the natural projection $p_A:A\times S\to A$.
Note that in the following, we remove the assumption about polarized endomorphisms in the original proposition by applying its proof without the argument related to polarized endomorphisms.

\begin{proposition}\label{prop-nz}(cf.~\cite[Proposition 3.5]{Na-Zh})
Let $f:X\to X$ be a surjective endomorphism of a klt projective variety $X$ with $K_X\sim_{\mathbb{Q}}0$.
Then there exist a finite covering $\tau: A \times S \to X$ \'etale in codimension one for an abelian variety $A$ and a weak Calabi-Yau variety $S$, and surjective endomorphisms $f_A : A \to A$, $f_S : S \to S$ such that $\tau\circ (f_A\times f_S)=f\circ \tau$.
\end{proposition}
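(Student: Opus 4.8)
The approach is to run the argument of \cite[Proposition 3.5]{Na-Zh}, which I sketch below, and to observe that its polarized hypothesis never enters the structural core and may simply be dropped. The three ingredients are: (i)~a \emph{canonical} finite cover $\tau\colon\widetilde X\to X$, \'etale in codimension one, with a splitting $\widetilde X\cong A\times S$ where $A$ is an abelian variety and $S$ is weak Calabi--Yau; (ii)~the lift of $f$ to a surjective endomorphism $\widetilde f\colon\widetilde X\to\widetilde X$ over $f$, using the canonicity of $\tau$; and (iii)~the splitting $\widetilde f=f_A\times f_S$.

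For (i) I would invoke the singular Beauville--Bogomolov decomposition (Greb--Kebekus--Peternell, Druel, H\"oring--Peternell, Greb--Guenancia--Kebekus): since $X$ is klt with $K_X\sim_{\mathbb{Q}}0$, after a quasi-\'etale cover the reflexive tangent sheaf is polystable and splits off its unique flat (abelian) summand, producing $\widetilde X\cong A\times S$ with $A$ abelian and $S$ a product of irreducible Calabi--Yau and irreducible holomorphic symplectic factors, hence weak Calabi--Yau; choosing for $\tau$ the maximally quasi-\'etale cover (or a canonical common refinement of the finitely many covers entering the construction) makes $\tau$ depend only on $X$. For (ii): pulling $\tau$ back along $f$ gives a further finite cover of $X$, again \'etale in codimension one and of the same decomposition type, so by the uniqueness in (i) it is $X$-isomorphic to $\tau$; composing these isomorphisms yields $\widetilde f$ with $\tau\circ\widetilde f=f\circ\tau$. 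No power of $f$ is needed here: a power would be required only to split $f_S$ further into the individual factors of $S$, which is not asserted.

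For (iii): as recalled above, the Albanese morphism of $\widetilde X=A\times S$ is the projection $p_A\colon A\times S\to A$, and every surjective endomorphism commutes with the Albanese morphism, so $\widetilde f$ descends to a surjective $f_A\colon A\to A$ with $p_A\circ\widetilde f=f_A\circ p_A$; thus $\widetilde f(a,s)=(f_A(a),\psi(a,s))$ for a morphism $\psi\colon A\times S\to S$. Being finite (a surjective endomorphism of a projective variety), $\widetilde f$ carries each fibre $\{a\}\times S$ onto $\{f_A(a)\}\times S$, so $a\mapsto\psi(a,\cdot)$ is a morphism from the connected variety $A$ into the quasi-projective scheme of degree-$d$ endomorphisms of $S$, the degree $d$ being constant in $a$ by connectedness. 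This scheme is $0$-dimensional near any surjective endomorphism $h$: indeed $K_S\sim_{\mathbb{Q}}0$ forces the ramification of $h$ to vanish, so $h$ is quasi-\'etale and the tangent space to the scheme at $h$ is identified with the space of reflexive vector fields on $S$, which is zero for weak Calabi--Yau $S$ by the decomposition of (i) together with Serre duality on the Calabi--Yau and holomorphic symplectic factors. Hence $\psi(a,\cdot)$ is independent of $a$, so $\widetilde f=f_A\times f_S$ with $f_A,f_S$ surjective, as wanted.

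I expect the main obstacle to be (i)--(ii): the product splitting of $\widetilde X$ is a deep imported theorem, and the delicate internal point is arranging that the \'etale-in-codimension-one property and the product structure can be realized on one and the same cover, and that this cover is canonical enough for $f$ to lift without any further modification. The rigidity input in (iii) is comparatively routine. Both are already carried out in \cite{Na-Zh}, and since neither step uses that $f$ is polarized, the stated generalization follows.
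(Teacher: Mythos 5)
Your proposal is correct and matches the paper's treatment: the paper gives no independent proof but simply states (in the paragraph preceding the proposition) that one applies the proof of \cite[Proposition 3.5]{Na-Zh} verbatim, omitting the polarized-endomorphism argument, exactly as you observe. Your fleshed-out sketch of the cover, the lift, and the rigidity splitting is consistent with that original argument, so no further comment is needed.
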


\begin{proof}[Proof of Theorem \ref{main-thm-des}]
By \cite[Chapter V, Corollary 4.9]{ZDA}, $K_X\sim_{\Q}0$.
Since $X$ is klt, $X$ has rational singularities and hence the Albanese map is regular (cf.~\cite[Proposition 2.3]{Re} or ~\cite[Lemma 8.1]{Ka}).

By Proposition \ref{prop-nz}, we have the following commutative diagram
$$\xymatrix{
A\times S\ar[d]^{\tau}\ar[r]^{\alb_{A\times S}} & A\ar[d]^{\pi}\\
X\ar[r]^{\alb_X}            & \Alb(X)
}$$
where $\pi$ exists by the universal property of the Albanese map.
The same reason implies that $\pi$ is surjective and hence $\alb_X$ is surjective.
Note that the whole diagram is $(f_A\times f_S)$-equivariant.

Suppose $f$ is quasi-amplified.
Then so are $f_A\times f_S$ and $f_A$ by Lemmas \ref{lem-lift-des} and \ref{lem-prod}.
Since $A$ and $\Alb(X)$ are abelian varieties, $f_A$ is amplified and hence so is $g$ by Proposition \ref{prop-5lem}.
Suppose $f$ is PCD.
Then so are $f_A\times f_S$, $f_A$ and $g$ by Lemmas \ref{lem-fin-lift-des}, \ref{lem-prod} and \ref{lem-pcd-des}.
In both cases, $g$ is PCD (cf.~Theorem \ref{thm-amp-pcd}).
Taking the Stein factorization of $\alb_X$ and applying \cite[Lemma 5.2]{CMZ} and Proposition \ref{prop-cov-ab}, $\alb_X$ has connected fibres by the universal property.
\end{proof}

\par \vskip 1pc
{\bf Acknowledgement.}
The author would like to thank Professor De-Qi Zhang for many inspiring discussions and suggestions to improve this paper.
He would like to thank Professor Daniel Huybrechts for answering his questions about Hyperk\"ahler manifolds and Professor Keiji Oguiso for suggesting Example \ref{exa-k3-1}.
He would also like to thank Professor Paolo Cascini for his very warm hospitality and support
during the preparation of the paper.
Finally, he would like to thank the referee for valuable comments.
The author is supported by a Postdoctoral Fellowship of Max Planck Institute for Mathematics.

\end{document}